%% file: main.tex
\documentclass[11pt,letterpaper]{article}

\usepackage{amsmath,amssymb,amsthm,comment,bbm}
\usepackage{mathtools}
\usepackage{array}
\usepackage{bbold,bm}
\usepackage{colonequals}
\usepackage{xcolor}
\usepackage[letterpaper,margin=1in]{geometry}
\usepackage{hyperref}
\usepackage[nottoc]{tocbibind}

\hypersetup{
    hidelinks,
    bookmarksopen=true,
    bookmarksnumbered=true,
    pdfpagemode=UseOutlines
}

\definecolor{cblue}{rgb}{0.121569,0.466667,0.705882}
\definecolor{corange}{rgb}{1.000000,0.498039,0.054902}
\definecolor{cgreen}{rgb}{0.172549,0.627451,0.172549}
\hypersetup{
    linkcolor=cblue,
    citecolor=cgreen,
    urlcolor=corange,
    colorlinks=true,
}

\DeclareSymbolFont{sfoperators}{OT1}{cmss}{m}{n}
\DeclareSymbolFontAlphabet{\mathsf}{sfoperators}
\makeatletter
\renewcommand{\operator@font}{\mathgroup\symsfoperators}
\makeatother
\DeclareMathOperator{\Tr}{Tr}
\DeclareMathOperator{\tr}{tr}

\theoremstyle{plain}
\newtheorem{theorem}{Theorem}[section]
\newtheorem{proposition}[theorem]{Proposition}
\newtheorem{corollary}[theorem]{Corollary}
\newtheorem{lemma}[theorem]{Lemma}

\theoremstyle{definition}
\newtheorem{definition}[theorem]{Definition}
\newtheorem{example}[theorem]{Example}

\theoremstyle{remark}
\newtheorem{remark}[theorem]{Remark}

\newcommand{\E}{\mathbb{E}}
\newcommand{\R}{\mathbb{R}}
\newcommand{\C}{\mathbb{C}}
\newcommand{\CC}{\mathbb{C}}
\newcommand{\EE}{\mathbb{E}}
\newcommand{\NN}{\mathbb{N}}
\newcommand{\RR}{\mathbb{R}}
\newcommand{\NNN}{\mathcal{N}}

\newcommand{\sA}{\mathcal{A}}

\newcommand{\sN}{\mathcal{N}}
\newcommand{\sH}{\mathcal{H}}

\newcommand{\sX}{\mathcal{X}}
\newcommand{\sL}{\mathcal{L}}
\newcommand{\sZ}{\mathcal{Z}}

\newcommand{\sa}{\mathsf{sa}}
\newcommand{\free}{\mathsf{free}}
\newcommand{\vect}{\mathsf{vec}}

\newcommand{\Xfree}{\mathcal{X}_{\free}}

\newcommand{\inner}[2]{\langle #1, #2 \rangle} %
\newcommand{\innerL}[2]{\left\langle #1, #2 \right\rangle} %

\newcommand{\sym}{\mathsf{sym}}
\renewcommand{\vec}{\mathsf{vec}}
\newcommand{\Cov}{\mathsf{Cov}}

\newcommand{\brap}[1]{\left( #1 \right)}

\newcommand{\norm}[1]{\left\lVert#1\right\rVert}

\newcommand{\eps}{\varepsilon}
\newcommand\defeq{\vcentcolon=}
\newcommand{\Norm}[1]{\left\lVert#1\right\rVert}

\newtheoremstyle{graytheorem}
  {\topsep}{\topsep}
  {\small\color{gray}\itshape}{}
  {\small\color{gray}\bfseries}{.}{.5em}{}
\theoremstyle{graytheorem}

\theoremstyle{plain}

\definecolor{AfonsoBlue}{RGB}{30,65,123}
\definecolor{TimGreen}{rgb}{0.172549,0.627451,0.172549}
\definecolor{PNNOrange}{RGB}{242,140,40}

\title{Matrix Concentration and Equivalent Operators on Fock Spaces}

\author{%
  \makebox[\textwidth][c]{%
    Afonso S. Bandeira\thanks{Department of Mathematics, ETH Z\"urich. \texttt{bandeira@math.ethz.ch}}
    \qquad
    Dmitriy Kunisky\thanks{Department of Applied Mathematics \& Statistics, Johns Hopkins University. \texttt{kunisky@jhu.edu}}
    \qquad
    Petar Nizi\'{c}-Nikolac\thanks{Department of Mathematics, ETH Z\"urich. \texttt{petar.nizic-nikolac@ifor.math.ethz.ch}}
  }\\[0.8em]
  \makebox[\textwidth][c]{%
    Lucas Pesenti\thanks{Department of Mathematics, ETH Z\"urich. \texttt{lpesenti@ethz.ch}}
    \qquad\qquad
    Robert Wang\thanks{Cheriton School of Computer Science, University of Waterloo. \texttt{robert.wang2@uwaterloo.ca}}
  }
}

\date{October 1, 2026}

\hypersetup{
  pdfauthor={Afonso S. Bandeira, Dmitriy Kunisky, Petar Nizic-Nikolac, Lucas Pesenti, and Robert Wang},
}

\begin{document}
\pagenumbering{roman}

\maketitle
\thispagestyle{empty}

\begin{abstract}
    We develop a method for proving matrix concentration inequalities by identifying random matrices with associated deterministic operators acting on suitable Fock spaces and controlling norms of these operators and their restrictions to low-order subspaces.
    Applying this method, we obtain new strengthenings of the non-commutative Khintchine inequality that improve on the state of the art, in particular sharpening recent inequalities due to Bandeira, Boedihardjo, and van Handel (2023) quantifying intrinsic freeness of random matrices.
    Our proofs of these results are based on relatively simple operator algebra arguments and involve neither Gaussian interpolation nor explicit combinatorics of tracial moments.
    Further, our techniques apply equally well to several models of non-commutative random variables of interest in the literature, such as operator series constructed from $q$-Gaussian and $\Gamma$-independent systems of operators, treating all of these objects with the same method.
    We obtain new norm bounds for such operators both in the style of the non-commutative Khintchine inequality and in the style of Lehner's operator norm formula.
\end{abstract}

\clearpage

\tableofcontents
\thispagestyle{empty}

\clearpage

\pagenumbering{arabic}
\section{Introduction}

\emph{Matrix concentration inequalities} are one of the main tools of modern non-asymptotic random matrix theory, giving general-purpose bounds on norms of random matrices and finding applications in several areas of pure and applied mathematics as well as in physics, computer science, and statistics (see, e.g., the surveys~\cite{T15,VanHandel-2017-StructuredRandomMatrices,bandeira2025randommatrices}).
Our goal in this paper is to develop a new perspective on matrix concentration inequalities that leads to improvements and unifications of previous results.

While our results handle a wide range of models, from random matrices to certain deterministic operators arising in non-commutative probability, we focus for now on the \emph{Gaussian series} random matrix model,
\begin{equation}
\label{eq:gauss-series}
X = \sum_{i = 1}^n g_i A_i\,,
\end{equation}
for $g_i \sim \sN(0, 1)$ i.i.d.\ standard Gaussian random variables and $A_i \in M_d(\CC)_{\sa}$ deterministic self-adjoint $d \times d$ matrix coefficients.
Any self-adjoint random matrix with centered jointly Gaussian entries can be expressed in this way for suitable $A_i$.
We focus on the goal of controlling the expectation of the spectral norm $\EE \|X\|$; tail bounds can be obtained from these with standard tools (see the surveys cited above for details).
Matrices that are not self-adjoint can also be treated by Hermitian dilation.

A classical result in this area is the celebrated Non-Commutative Khintchine (NCK) inequality.

\begin{theorem}[NCK inequality~\cite{LustPiquard1986,NCK}]
    \label{thm:nck}
    In the above setting,\footnote{We write $x\lesssim y$ if $x \leq Cy$ for a universal constant $C$, and $x \asymp y$ if $x \lesssim y$ and $y \lesssim x$.}
    \[ \sigma(X) \lesssim \EE\|X\| \lesssim \sigma(X) (\log d)^{1/2}\,, \]
    where we define the \emph{matrix variance} parameter
    \[
    \sigma(X) \colonequals \left\|\sum_{i = 1}^n A_i^2 \right\|^{1/2} = \| \EE X^2\|^{1/2}\,.
    \]
\end{theorem}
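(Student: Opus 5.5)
The lower bound is elementary and I will do it first. Since $X$ is self-adjoint, $X^2 \preceq \|X\|^2 I$, so $\EE X^2 \preceq \EE\|X\|^2\, I$ and hence $\sigma(X)^2 = \|\EE X^2\| \le \EE\|X\|^2$. To pass from the second moment to the first, I will use Gaussian concentration. The map $g \mapsto \|\sum_i g_i A_i\|$ is Lipschitz with constant
\[
\sup_{\|u\|=1}\Bigl\|\sum_i u_i A_i\Bigr\| \le \sigma(X)\,.
\]
This holds because for unit vectors $v$ we have $|\sum_i u_i\langle v, A_i v\rangle| \le (\sum_i \langle v, A_i v\rangle^2)^{1/2} \le (\sum_i \|A_i v\|^2)^{1/2} \le \sigma(X)$. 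Consequently $\mathrm{Var}\|X\| \le \sigma(X)^2$. The upper bound below gives $\EE\|X\|\le C\sigma(X)\sqrt{\log d}$, so this variance bound alone is not enough to conclude. Instead I will use the Gaussian Kahane--Khintchine inequality $\EE\|X\|^2 \lesssim (\EE\|X\|)^2$, which follows from log-concavity or Borell's inequality. This yields $\sigma(X) \lesssim \EE\|X\|$.

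For the upper bound I will use the classical trace moment method. For an integer $p\ge1$,
\[
\EE\|X\| \le (\EE \|X\|^{2p})^{1/2p} \le (\EE \tr X^{2p})^{1/2p}\,.
\]
The key claim is that $\EE \tr X^{2p} \le (2p-1)^{p}\, \tr\bigl( (\sum_i A_i^2)^p\bigr) \le d\,(2p)^p \sigma(X)^{2p}$. To prove it, I will use Gaussian integration by parts:
\[
\EE \tr X^{2p} = \sum_i \EE \tr (A_i X^{2p-1}) = \sum_{k=0}^{2p-2}\sum_i \EE\tr\bigl(A_i X^{k} A_i X^{2p-2-k}\bigr)\,.
\]
Each term will be bounded via the matrix H\"older inequality combined with the interpolation/convexity inequality $\sum_i \tr(A_i X^k A_i X^{m-k}) \le \tr\bigl((\sum_i A_i^2) X^{m}\bigr)$, which holds for even $m = 2p-2$ by a standard Lieb-type (or Cauchy--Schwarz with H\"older) argument. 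Bounding $\tr\bigl((\sum_i A_i^2) X^{2p-2}\bigr) \le (\tr (\sum_i A_i^2)^p)^{1/p}(\tr X^{2p})^{1-1/p}$ then gives the self-improving inequality
\[
\EE\tr X^{2p} \le (2p-1)\,\bigl(\tr(\textstyle\sum_i A_i^2)^p\bigr)^{1/p}\,(\EE\tr X^{2p})^{1-1/p}\,,
\]
after applying Jensen to the last factor. Rearranging proves the claim.

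To finish, I will choose $p \asymp \log d$, so that $d^{1/2p} = O(1)$. This gives $\EE\|X\| \lesssim \sqrt{p}\,\sigma(X) \asymp \sigma(X)\sqrt{\log d}$.

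I expect the main obstacle to be the cross-term inequality $\sum_i \tr(A_i X^k A_i X^{m-k}) \le \tr(\sum_i A_i^2 X^m)$ for odd $k$, where $X^k$ is not positive. My first attempt will be the Cauchy--Schwarz route: write $X^k = U|X|^k$ and apply Hölder with exponents $m/k$ and $m/(m-k)$ to $\tr(A_i|X|^kA_i|X|^{m-k})$, which after summing in $i$ reduces to the endpoint cases $k=0$ and $k=m$. As a fallback, I will use the free-probability comparison or the Buchholz bound.
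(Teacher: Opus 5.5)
Your proposal is correct, but for the upper bound it takes the classical route rather than the paper's. The paper never manipulates trace moments directly. It writes $\EE\tr X^{2p}=\tr\otimes\tau(\mathcal X^{2p})\le\|\mathcal X_p\|^{2p}$ for the equivalent operator compressed to Hermite degree at most $p$. It then bounds $\|\mathcal X_p\|\le 2\|\mathcal L_p\|$ using only the Loewner factorization $\mathcal L\mathcal L^*\preceq\sigma(X)^2\,I_d\otimes N$ and $\Pi_pN\Pi_p\preceq p\,\Pi_p$, which gives $(\EE\tr X^{2p})^{1/2p}\le 2\sqrt p\,\sigma(X)$ (Corollary~\ref{cor:nck-projection}).

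You instead use Gaussian integration by parts, bound each cross term by the $k=0$ term, and close a self-improving recursion with trace H\"older and Jensen. This buys the stronger Schatten-class statement $\EE\Tr X^{2p}\le(2p-1)^p\Tr\bigl((\sum_iA_i^2)^p\bigr)$ with a slightly better constant. Any bound through $\|\mathcal X_p\|$, by contrast, is necessarily in terms of the operator norm $\|\sum_iA_i^2\|$.

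In exchange, the paper needs no trace inequalities, and its structure drives the rest of the paper. Multiplying in the other order, $\mathcal L^*\mathcal L$, and applying the commutation relations produces the partial transpose $C(X)$ and hence the $\kappa(X)$ bound of Theorem~\ref{thm:strong-NCK}. In your recursion, the non-commutativity sits in exactly the cross terms you discard.

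The paper only cites the lower bound, and your Kahane--Khintchine argument for it is fine. Gaussian concentration alone would also suffice: the Lipschitz constant $\sup_{\|u\|=1}\|\sum_iu_iA_i\|=\sup_{\|v\|=1}(\sum_i\langle v,A_iv\rangle^2)^{1/2}$ is at most $\sqrt{\pi/2}\,\EE\|X\|$, by testing $\EE|\langle v,Xv\rangle|$.

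Two small remarks.

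First, the obstacle you anticipate is not one. In an eigenbasis $X=\sum_j\lambda_jv_jv_j^*$ with $a_{jl}=\langle v_j,A_iv_l\rangle$, you have $\tr(A_iX^kA_iX^{m-k})\le\sum_{j,l}|a_{jl}|^2|\lambda_j|^k|\lambda_l|^{m-k}$. H\"older with exponents $m/k$ and $m/(m-k)$, the symmetry $|a_{jl}|=|a_{lj}|$, and evenness of $m$ then bound this by $\tr(A_i^2X^m)$, for each $i$ separately and every $k$, odd or even.

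Second, your step $\EE\|X\|^{2p}\le\EE\tr X^{2p}$ and the factor $d$ in your claim require the unnormalized trace $\Tr$. With the paper's normalized $\tr$, you pick up the factor $d^{1/2p}$ as in \eqref{eq:norm-trace}. This is harmless once $p\asymp\log d$.
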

\noindent
Only the logarithmic factor depending on the dimension $d$ in the upper bound precludes the NCK inequality from giving an estimate of $\EE\|X\|$ that is tight up to constants. This factor also corresponds to the dimensional factor appearing in the tail bound of the matrix Bernstein inequality.

A central goal of matrix concentration inequalities is to provide expectation and tail estimates for $\|X\|$, but two standard examples reveal that such estimates cannot be sharp up to constants while depending solely on $\sigma(X)$: for a Wigner matrix, with i.i.d.\ standard Gaussian entries above the diagonal, the logarithmic factor in NCK is known to be spurious (the lower bound of Theorem~\ref{thm:nck} is tight); on the other hand, for a diagonal matrix with i.i.d.\ standard Gaussian entries on the diagonal, the logarithmic factor is required (the upper bound of Theorem~\ref{thm:nck} is tight).

Various refinements of the NCK inequality have been proposed in recent years aiming to clarify this matter, starting with the work of Tropp~\cite{Tropp2018SecondOrder}.
We focus on the following result of \cite{BBvH23}, which makes further progress in this direction using the additional parameter
\begin{equation}
    v(X) \defeq \left\|\sum_{i = 1}^n \vec(A_i)\vec(A_i)^{*}\right\|^{1/2} = \|\Cov(\vec(X))\|^{1/2}.\label{eq:v-covariance}
\end{equation}
The elegant method of that work compares $X$ to a related deterministic operator $\Xfree$, defined as
\begin{equation}
    \Xfree \defeq \sum_{i = 1}^n A_i \otimes s_i\,, \label{eq:Xfree}
\end{equation}
for $s_1, \dots, s_n$ a family of freely independent semicircular operators on a suitable Hilbert space (see Section~\ref{sec:fock-ind}) and satisfying 
\[ \sigma(X) \leq \|\Xfree\| \leq 2\sigma(X)\,. \]
In a sense, $\Xfree$ can be seen as an ``optimistic proxy'' for $X$: if $\EE \|X\| \approx \|\Xfree\|$, then it is the lower bound of the NCK inequality that is tight.
Because $\Xfree$ also corresponds to the summands in the Gaussian series behaving as though they were freely independent, \cite{BBvH23} refer to this situation as \emph{intrinsic freeness} of $X$.
One of the main results of~\cite{BBvH23} gives sufficient conditions for this to be the case.
\begin{theorem}[Intrinsic freeness NCK inequality~\cite{BBvH23}]
    \label{thm:bbvh}
    In the above setting,
    \begin{align}
    \EE\|X\| 
    &\leq \|\Xfree\|\left(1 + C\left(\frac{v(X)}{\sigma(X)}\right)^{1/2}(\log d)^{3/4}\right) \label{eq:bbvh-Xfree} \\
    &\lesssim \sigma(X) + v(X)(\log d)^{3/2}\,. \label{eq:bbvh-nck}
    \end{align}
\end{theorem}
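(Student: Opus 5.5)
The plan is to follow the paper's program: identify $X$ with a deterministic operator on a Fock space and compare it, on low-order subspaces, with $\Xfree$ acting on the full (free) Fock space. Concretely, for even $p = 2k$ we bound $\E\tr X^{2k}$. Realize $g_i$ as $a_i + a_i^*$ on the symmetric Fock space $\mathcal{F}_s(\CC^n)$ with vacuum $\Omega$. Then
\[ \E\tr X^{2k} = \langle \Omega, \tr_d \otimes \mathrm{id}\,(\mathbf{X}^{2k})\,\Omega\rangle, \qquad \mathbf{X} = \sum_i A_i\otimes(a_i+a_i^*), \]
and similarly $\Xfree$ is the same formula with the free creation/annihilation operators $\ell_i$ on $\mathcal{F}(\CC^n)$. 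Since $\mathbf{X}^k(\mathbbm{1}\otimes\Omega)$ lies in the span of Fock levels $\le k$, it suffices to bound the operator norm of $\mathbf{X}$ restricted/compressed to levels $\le k$ (or to control $\|\mathbf{X}^k (v\otimes\Omega)\|$) in terms of $\|\Xfree\|$ plus an error.

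Key steps, in order. (1) Decompose $\mathbf{X} = \mathbf{A}^* + \mathbf{A}$ with $\mathbf{A}^* = \sum_i A_i\otimes a_i^*$. Compare with the free analogue via the natural symmetrization map $P_m : (\CC^n)^{\otimes m}\to \mathrm{Sym}^m$: on level $m$, $a_i^* = \sqrt{m+1}\,P_{m+1}\ell_i^*$-type identities hold, and the only discrepancy between symmetric and free actions comes from annihilation operators commuting past creation operators. (2) Write $\mathbf{A}\mathbf{A}^* - \mathbf{A}^*\mathbf{A}$ on level $m$. In the free case $\sum_i A_i\ell_i\,\ell_j^*A_j$ collapses to $\sum A_i^2\otimes\mathbbm 1$, of norm $\sigma^2$. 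In the bosonic case extra "crossing" terms appear, of the form $\sum_{i,j} A_iA_j\otimes a_j^*a_i$ acting on level $m$. These crossing terms are exactly where $v(X)$ enters: one shows $\|\sum_{i,j}A_iA_j\otimes a_j^* a_i\|_{\text{level } m}\lesssim m\, v(X)\sigma(X)$, by factoring the sum as (row)$\cdot$(column) of $\sum_i \vec(A_i)\vec(A_i)^*$-type operators and using $\|\Cov\|=v^2$. (3) Iterate: bound $\|\mathbf{X}|_{\le k}\|\le \|\Xfree\|+ C(k\,v\sigma)^{1/2}$-type estimates, so $(\E\tr X^{2k})^{1/2k}\le d^{1/2k}(\|\Xfree\| + C k^{1/2}\,\dots)$, where the exponent is tuned via interpolation between the trivial level-growth bound $\sqrt{k}\sigma$ and the crossing bound. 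Choosing $k\asymp\log d$ makes $d^{1/2k}=O(1)$ multiplicatively close to $1$ after taking $k$ a large multiple of $\log d$. This yields a bound of the form $\|\Xfree\|(1 + C(v/\sigma)^{1/2}(\log d)^{3/4})$. (4) Then \eqref{eq:bbvh-nck} follows from $\|\Xfree\|\le 2\sigma$ and AM–GM: $\sigma^{1/2}v^{1/2}(\log d)^{3/4}\le \sigma + v(\log d)^{3/2}$.

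The main obstacle is step (3): getting the multiplicative form $\|\Xfree\|(1+\cdots)$ with the exact exponents $1/2$ and $3/4$ rather than a lossy additive bound. The difficulty is that the perturbation from crossing terms is of order $k v\sigma$ in the "square", so it must be absorbed as $\|\Xfree\|^2 + O(k v\sigma)$ inside a square root, giving $\|\Xfree\| + O(\sqrt{k v\sigma})$. To handle the factor $(\log d)^{3/4}$, I would first try the following: take $k\asymp (\log d)^{3/2}$-scale tradeoffs, comparing $\E\tr X^{2k}$ to $\tr\otimes\tau(\Xfree^{2k})$ through a telescoping/Duhamel-style sum over the positions where a crossing occurs. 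The resulting sum is bounded by $(2k)^2 \cdot$ (norm of the crossing)$\cdot\|\Xfree\|^{2k-2}$. This mirrors the structure of the original interpolation argument but is carried out purely through operator inequalities on the truncated Fock space.
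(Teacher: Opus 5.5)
The gap is step (3), which is where all of \eqref{eq:bbvh-Xfree} lies; nothing in your steps (1)--(2) feeds into it. Your crossing computation is fine. The term $\sum_{i,j}A_iA_j\otimes a_j^*a_i$ is $C(X)$ sandwiched between creation and annihilation operators, so it is at most $m\min\{\sigma,v\}^2\le mv\sigma$ on level $m$ (this is Theorem~\ref{thm:operator-NCK}). But what it controls is $\|\mathbf A_k\|^2\le\sigma(X)^2+k\kappa(X)^2$. Passing to $\mathbf X_k=\mathbf A_k+\mathbf A_k^*$ by the triangle inequality then gives leading term $2\sigma(X)$, which is exactly the paper's proof of Theorem~\ref{thm:strong-NCK}. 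The free analogue of your identity only says the free creation part has norm $\sigma(X)$, and $\|\Xfree\|$ is not determined by that: it is $(1+o(1))\sigma(X)$ for $A_i=e_1e_{i+1}^*+e_{i+1}e_1^*$ and $2\sigma(X)$ for GUE. So the proposal has no mechanism producing the constant $\|\Xfree\|$.

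The Duhamel fallback does not supply one. First, $\mathbf X$ and $\Xfree$ act on different Fock spaces, with no intertwiner making a telescoping identity exact. Second, a sum over single crossing positions is only a first-order expansion, whereas $\E\tr X^{2k}$ with $k\gtrsim\log d$ involves arbitrarily many crossings. Third, even granting your bound $(2k)^2\cdot kv\sigma\cdot\|\Xfree\|^{2k-2}$, the resulting relative error $k^2v/\sigma$ balanced against $d^{1/2k}$ gives only $(v/\sigma)^{1/3}(\log d)^{2/3}$. That is weaker than the target whenever the target is nontrivial.

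The missing idea is how the paper proves the stronger Theorem~\ref{thm:strong-intrinsic}. That theorem implies \eqref{eq:bbvh-Xfree}, since with $v'=\min\{\sigma,v\}$ one has $(v'^2\log d/\sigma^2)^{1/2}+(v'^2\log d/\sigma^2)^{1/3}\lesssim(v/\sigma)^{1/2}(\log d)^{3/4}$. The idea is to bound $\sL+\sL^*$ directly (with $\sL$ the creation part) rather than through $\|\sL+\sL^*\|\le2\|\sL\|$, using a matrix-weighted Loewner inequality:
\begin{itemize}
\item For $Z\succ0$ and $\mathcal Z=Z\otimes\mathbf 1$, positivity of $(\mathcal Z^{1/2}\sL-\mathcal Z^{-1/2})^*(\mathcal Z^{1/2}\sL-\mathcal Z^{-1/2})$ gives $\sL+\sL^*\preceq\mathcal Z^{-1}+\sL^*\mathcal Z\sL$.
\item The commutation relations turn the right-hand side into $(Z^{-1}+\sum_iA_iZA_i)\otimes\mathbf 1$ plus a crossing term governed by $C_X(Z)=[A_jZA_i]_{i,j}$, with $\|C_X(Z)\|\le v'^2\|Z\|$ (Proposition~\ref{prop:road_master}, Lemma~\ref{lem:crossing}).
\item Truncate to at most $p$ particles and replace $Z$ by $Z_t=(tI+Z^{-1})^{-1}$, so that $\|Z_t\|\le 1/t$. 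Lehner's formula \eqref{eq:road_lehner} then identifies the infimum over $Z$ with $\|\Xfree\|$, yielding $\|\mathcal X_p\|\le\|\Xfree\|+2v'\sqrt p$ (Theorem~\ref{thm:intrinsic2}).
\item Lemma~\ref{lem:optimization-free} finishes.
\end{itemize}

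The size of the crossing also matters. Even granting step (3) in your form $\|\mathbf X_k\|\le\|\Xfree\|+C\sqrt{kv\sigma}$, the inequality $d^{1/2k}\ge1+\log d/(2k)$ shows the best choice of $k$ leaves relative error of order $(v\log d/\sigma)^{1/3}$. This exceeds $(v/\sigma)^{1/2}(\log d)^{3/4}$ once $v/\sigma<(\log d)^{-5/2}$, so you need $\min\{\sigma,v\}^2$ in place of $v\sigma$. Your step (4) is correct.
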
  
\noindent
Provided that $v(X) / \sigma(X) \ll (\log d)^{-3/2}$ (say over a sequence of random matrix models with growing dimension $d \to \infty$), we find that $\EE\|X\| \le (1 + o(1))\|\Xfree\|$, a remarkably precise estimate in terms of the deterministic quantity $\|\Xfree\|$.
This norm $\|\Xfree\|$ admits an explicit variational description due to Lehner~\cite{Leh99}, which we discuss in Section~\ref{sec:lehner}; this description may also be formulated as a semidefinite program parameterized by the matrices $A_i$~\cite{Kunisky2026Lehner}.
Further,~\cite{BBvH23} develops a more detailed theory of how, under such assumptions, various other spectral statistics of $X$ resemble those of $\Xfree$.

However, a few aspects of this theory remain lacking.
First, the logarithmic factors above are larger than the $(\log d)^{1/2}$ appearing in the original NCK inequality, and it is natural to expect that such improved NCK inequalities can be refined to have this smaller polylogarithmic dependence.
Second, intuition drawn from the proofs of the NCK inequality and its refinements suggests that the \emph{commutativity} or lack thereof of the $A_i$ should govern whether the polylogarithmic factor should be present or not.
In the theory of \cite{BBvH23}, this is measured by the parameter $v(X)$; however, it is unclear from its definition in what sense this parameter actually measures commutativity.\footnote{In the arguments  in~\cite{BBvH23}, $v(\cdot)$ arises indirectly in bounds on the alignment parameter $w(\cdot)$ of Tropp~\cite{Tropp2018SecondOrder}.}
Finally and relatedly, the parameter $v(X)$ fails a natural invariance: we have
$\|X \otimes I_m\| = \|X\|$ and $\sigma(X \otimes I_m) = \sigma(X)$, while $v$ satisfies
\begin{equation}\label{eq:invariancetensorI}
    v(X \otimes I_m) = v(X) \cdot m^{1/2}\,.
\end{equation}
Thus over a sequence of such models with growing $m$ the estimates of \cite{BBvH23} degrade more and more, and accordingly Theorem~\ref{thm:bbvh} can become, for large enough $m$ in such a construction, inferior even to the original Theorem~\ref{thm:nck}.

Our immediate goal in this paper is to provide a different refinement of the NCK inequality that always improves on both Theorem~\ref{thm:nck} and Theorem~\ref{thm:bbvh}, achieves a tight polylogarithmic factor $(\log d)^{1/2}$, depends on a more natural measurement of commutativity of the $A_i$, and enjoys the above invariance that is failed by $v(X)$ in~\eqref{eq:invariancetensorI}.\footnote{It is worth mentioning that~\cite{BBvH23} shows that for a parameter to satisfy this invariance and sharply capture the spectral norm of Wigner matrices, it must fail to satisfy the triangle inequality, and indeed this is the case for the parameter $\kappa(X)$ we introduce below.}

Our proof technique is quite different from previous work, including \cite{BBvH23}, and is operator-theoretic in nature. 
The general idea is, instead of the random matrix $X$, to consider the \emph{equivalent operator}, denoted $\mathcal{X}$ below, that acts by matrix multiplication on the space of random vectors whose entries are polynomials in the underlying Gaussian variables $g_1, \dots, g_n$.
The equivalent operator, as its name suggests, captures all relevant information about the behavior of $X$; for example, we will show in Section~\ref{sec:equivalentGaussian} that the tracial moments $\EE \tr X^k$ can be computed in terms of quadratic forms with $\mathcal{X}$.
In the basis of Hermite polynomials, the action of $\mathcal{X}$ admits a concise combinatorial description in terms of \emph{creation} and \emph{annihilation operators} on a suitable \emph{Fock space}.
We borrow these useful notions from quantum mechanics and quantum field theory (QFT); see \cite{folland2008quantum,derezinski2013mathematics} for mathematical references from that perspective.
This same structure is used in the standard construction of the operators $s_i$ appearing in the definition of $\Xfree$ and Lehner's calculation of the norm $\|\Xfree\|$, and this connection allows us to relate $\EE\|X\|$ with $\|\Xfree\|$ via $\mathcal{X}$ without appealing to the interpolation argument of \cite{BBvH23}.

The same proof technique applies equally well to other operators constructed from creation and annihilation operators satisfying various \emph{commutation relations}, as studied extensively in the literature on QFT and associated operator algebras.
For instance, since $\Xfree$ can be constructed in this way, our technique gives a simple proof of the upper bound side of Lehner's formula for $\|\Xfree\|$, as well as bounds on the norms of other operator series such as those having \emph{$\Gamma$-independent} operators in place of the $s_i$, as studied in several recent works~\cite{collinsEigenvalueBound,youssefTuran,BKNPW2026-Paper1}.
The most general bounds we prove all have the same variational form of Lehner's formula, and we believe that our approach considerably clarifies this shared structure.

\subsection{Main results}

\paragraph{Matrix concentration}
We first present our main results for random matrices.
Before describing our inequalities, let us introduce our new matrix concentration parameter:
\begin{align}
    \kappa(X)
    &\colonequals \lambda_{\max}\bigl(C(X)\bigr)^{1/2}\,,
    \label{eq:defkappaX} \\
    C(X)
    &\colonequals \sum_{i,j=1}^n (e_i e_j^{*}) \otimes (A_j A_i)
    = \begin{bmatrix}
        A_1^2 & A_2 A_1 & \cdots & A_n A_1 \\
        A_1 A_2 & A_2^2 & \cdots & A_n A_2 \\
        \vdots & \vdots & \ddots & \vdots \\
        A_1 A_n & A_2 A_n & \cdots & A_n^2
    \end{bmatrix}\,.
    \label{eq:defCX}
\end{align}
This may be understood as follows.
Consider the block matrix $R = [A_1 \, A_2 \, \cdots \, A_n] \in M_{d,dn}(\CC)$.
Then, we have $R^{*}R = \sum_{i, j = 1}^n (e_i e_j^{*}) \otimes (A_i A_j)$, and
\[ \lambda_{\max}(R^{*}R) = \|R\|^2 = \lambda_{\max}(RR^{*}) = \sigma(X)^2\,. \]
In contrast, the matrix $C(X)$ whose largest eigenvalue $\kappa(X)^2$ measures is the so-called \emph{partial transpose} of $R^{*}R$, that is obtained by taking the conjugate transpose of every $d \times d$ block, replacing $A_iA_j$ with $A_jA_i$.
The partial transpose operation plays a prominent role in quantum information theory; see, e.g.,~\cite[Section~6.2.3]{Watrous2018} for details.
We emphasize that it can dramatically transform the spectrum of a matrix, for instance mapping a positive semidefinite matrix to an indefinite one, or a low-rank matrix to a full-rank one.
For our purposes, we note that this operation is transparently related to the commutativity of the $A_i$.
At the extreme, if the $A_i$ all commute, then $C(X) = R^{*} R$ and so $\kappa(X) = \sigma(X)$.
Indeed, this is the worst case, and we have:
\begin{proposition}\label{prop:comparisonKappa}
    $\kappa(X) \leq \min\{\sigma(X),v(X)\}$.
\end{proposition}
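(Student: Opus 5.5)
We prove the two bounds $\kappa(X)^2 \le \sigma(X)^2$ and $\kappa(X)^2 \le v(X)^2$ separately. In both cases we test $C(X)$ against an arbitrary unit vector $x = \sum_i e_i \otimes x_i \in \CC^n \otimes \CC^d$, with $x_i \in \CC^d$ and $\sum_i \|x_i\|^2 = 1$. Since $C(X)$ is self-adjoint (the adjoint of the block $A_jA_i$ in position $(i,j)$ is $A_iA_j$, which is the block in position $(j,i)$), we have $\lambda_{\max}(C(X)) = \sup_x \langle x, C(X) x\rangle$. A direct computation gives
\[
\langle x, C(X)x\rangle = \sum_{i,j} \langle x_i, A_j A_i x_j\rangle = \sum_{i,j} \langle A_j x_i, A_i x_j\rangle .
\]

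\emph{Bound by $\sigma(X)$.} Apply Cauchy--Schwarz twice, first to each inner product and then to the double sum:
\[
\Bigl|\sum_{i,j}\langle A_j x_i, A_i x_j\rangle\Bigr| \le \Bigl(\sum_{i,j}\|A_j x_i\|^2\Bigr)^{1/2}\Bigl(\sum_{i,j}\|A_i x_j\|^2\Bigr)^{1/2} = \sum_{i,j}\|A_j x_i\|^2 .
\]
The two factors are equal after relabeling $i \leftrightarrow j$. The right-hand side equals
\[
\sum_i \Bigl\langle x_i, \Bigl(\sum_j A_j^2\Bigr) x_i\Bigr\rangle \le \sigma(X)^2 \sum_i \|x_i\|^2 = \sigma(X)^2 .
\]

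\emph{Bound by $v(X)$.} Rewrite the quadratic form in terms of vectorizations. With the convention $\vec(uw^{\top})=u\otimes w$ (any fixed convention works after conjugations), we have $\langle A_j x_i, A_i x_j\rangle = \Tr(x_i^* A_j A_i x_j)$. The goal is to express $\sum_{i,j}\Tr(A_j\, x_j x_i^*\, A_i)$ as $\sum_{i,j}\langle \vec(A_j), M_{ji}\rangle$-type terms. The cleaner route is to note that
\[
\sum_{i,j}\langle A_j x_i, A_i x_j\rangle = \Bigl\langle \sum_i \vec(A_i) \otimes \bar{\ }\!\cdots\Bigr\rangle,
\]
so more concretely we define the matrix $Y \defeq \sum_i \vec(A_i)\, w_i^*$, where $w_i$ is chosen so that $\langle A_j x_i, A_i x_j\rangle = \langle \vec(A_j), T(x_i x_j^*)\,\vec(A_i)\rangle$ for a suitable linear map. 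Concretely, $\Tr(x_i^* A_j A_i x_j) = \Tr(A_j (A_i x_j x_i^*))$, and $A_i x_j x_i^* = (I\otimes \overline{x_i}^{\,*}) \ldots$. Here I would set up the identity
\[
\sum_{i,j}\langle A_j x_i, A_i x_j\rangle = \langle \Phi, (S \otimes I)\,\Phi'\rangle,
\]
with $\Phi = \sum_i e_i\otimes \vec(A_i)$ and $S$ a partial swap. Then Cauchy--Schwarz, together with the operator bound $\|\sum_i \vec(A_i)\vec(A_i)^*\| = v(X)^2$, should give $\le v(X)^2 \sum_i\|x_i\|^2$. An equivalent and more symmetric way: write $\langle x, C(X) x\rangle = \sum_{a,b} \sum_{i} \ldots$ in coordinates,
\[
\sum_{i,j}\sum_{a,b,c} \overline{(x_i)_a}\,(A_j)_{ab}(A_i)_{bc}(x_j)_c .
\]
Fixing $b$, this is $\sum_b \bigl\langle u_b, \bigl(\sum_{i}\vec(A_i)\vec(A_i)^*\bigr)\text{-form}\bigr\rangle$. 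More precisely, set $y_b \defeq \sum_j \overline{(A_j)_{b\cdot}}$-weighted combinations. Then one obtains $\sum_b \langle z_b, \Cov\, z_b'\rangle$ with $z_b \in \CC^{d^2}$ given by $(z_b)_{(a,c)} = \delta$-type entries $(x_\cdot)$ satisfying $\sum_b\|z_b\|^2 \le \sum_i\|x_i\|^2$. Bounding by $\|\Cov(\vec X)\| \sum_b \|z_b\|\|z_b'\|$ then gives $v(X)^2$.

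\emph{Main obstacle.} The $v$ bound is the delicate step. The index bookkeeping must be arranged so that the quadratic form of $C(X)$ becomes a sum of quadratic forms of the covariance matrix $\sum_i \vec(A_i)\vec(A_i)^*$ evaluated at test vectors whose total squared norm is at most $1$. I would first carry this out in coordinates: pair the index $i$ with the row index $a$ of $x_i$, and regard $\bigl((x_i)_a\bigr)_{i,a}$ as a $d\times n$ matrix. This makes the partial-transpose structure of $C(X)$ match the realignment relating $R^*R$ to $\sum_i \vec(A_i)\vec(A_i)^*$.
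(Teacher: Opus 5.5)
Your bound $\kappa(X)\le\sigma(X)$ is correct and is essentially the paper's argument. The bound $\kappa(X)\le v(X)$, however, is not proved. The proposal cycles through several set-ups that contain placeholders: the unspecified $\Phi'$ and partial swap $S$, the ``weighted combinations'' $y_b$, and the ``$\delta$-type entries'' of $z_b$. None of them reaches a verified identity followed by an estimate.

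Moreover, the version you describe most concretely cannot exist as stated. If $z_b,z_b'\in\CC^{d^2}$ are built from $x$ alone, then
\[
\sum_b\langle z_b,\Cov\,z_b'\rangle=\sum_{k}\sum_b\langle z_b,\vec(A_k)\rangle\langle\vec(A_k),z_b'\rangle
\]
is a sum of terms, each depending on a single coefficient $A_k$. By contrast, $\langle x,C(X)x\rangle$ contains the cross terms $\langle x_i,A_jA_ix_j\rangle$ with $i\neq j$. These are bilinear in two different coefficients and cannot be produced this way. Relatedly, the passage to $v(X)$ is not an exact rewriting of the quadratic form; an inequality step is needed.

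The missing idea is to work with the $n\times n$ Gram matrix $V^*V=[\Tr(A_iA_j)]_{i,j}$, where $V$ has columns $\vec(A_i)$. Its norm is $\|VV^*\|=v(X)^2$, so $\|\sum_i w_i\vec(A_i)\|\le v(X)\|w\|$ for every $w\in\CC^n$. Your closing hint points in this direction. For each $a\in[d]$ set $B_a\defeq\sum_i (x_i)_aA_i$. Then $\|B_a\|_F^2\le v(X)^2\sum_i|(x_i)_a|^2$, and hence $\sum_a\|B_a\|_F^2\le v(X)^2\|x\|^2$.

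Now use your coordinate formula. Summing over $j$ gives $(B_c)_{ab}$. Using $(A_i)_{bc}=\overline{(A_i)_{cb}}$, summing over $i$ gives $\overline{(B_a)_{cb}}$. Therefore
\[
\langle x,C(X)x\rangle=\sum_{a,b,c}(B_c)_{ab}\,\overline{(B_a)_{cb}}\,.
\]
Since $(a,b,c)\mapsto(c,b,a)$ is a bijection of $[d]^3$, Cauchy--Schwarz bounds the modulus by $\sum_c\|B_c\|_F^2\le v(X)^2\|x\|^2$. This realignment identity, combined with the Gram-matrix bound, is exactly the paper's proof and is the step your proposal leaves open.
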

\begin{proof}
    Write $C = C(X)$.
    Suppose $u \in \CC^{dn}$ has blocks $u_1, \dots, u_n \in \CC^d$.
    Then, we have
    \[ |\langle u, C u\rangle| = \left|\sum_{i, j = 1}^n \langle A_j u_i, A_i u_j \rangle \right| \leq \sum_{i, j = 1}^n \|A_i u_j\|^2 = \sum_{j = 1}^n u_j^{*}\left(\sum_{i = 1}^n A_i^2\right)u_j \leq \sigma(X)^2\|u\|^2\,, \]
    showing $\kappa(X)^2 \leq \|C\| \leq \sigma(X)^2$.
    
    For the other bound, define matrices $B_a \colonequals \sum_{i = 1}^n (u_i)_a A_i \in M_d(\CC)$ for each $a \in [d]$.
    We have $\|B_a\|_F^2 = \|\sum_{i = 1}^n(u_i)_a\vec(A_i)\|^2 \leq v(X)^2 \sum_{i=1}^n |(u_i)_a|^2$.
    Then, we may also bound
    \[ |\langle u, Cu\rangle| = \left|\sum_{a, b, c = 1}^d (B_a)_{bc} \overline{(B_b)_{ac}}\right| \leq \sum_{a, b, c = 1}^d|(B_a)_{bc}|^2 = \sum_{a = 1}^d \|B_a\|_F^2 \leq v(X)^2\|u\|^2\,, \]
    whereby $\kappa(X)^2 \leq \|C\| \leq v(X)^2$.
\end{proof}
\noindent
Thus, the amount by which $\kappa(X)$ is smaller than $\sigma(X)$ is a natural measurement of the non-commutativity of the $A_i$.
Further, $\kappa(X)$ is easily verified to have the invariance suggested earlier:
\begin{proposition}
    $\kappa(X) = \kappa(X \otimes I_m)$.
\end{proposition}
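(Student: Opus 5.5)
The plan is to compute $C(X\otimes I_m)$ explicitly and recognize it as $C(X)\otimes I_m$, up to a fixed permutation of tensor factors; a unitary conjugation of this kind preserves the spectrum. The argument is a direct computation.

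First I fix conventions. The series $X\otimes I_m = \sum_{i=1}^n g_i (A_i\otimes I_m)$ is again a Gaussian series, now with self-adjoint coefficients $A_i' \colonequals A_i \otimes I_m \in M_{dm}(\CC)_{\sa}$. By definition \eqref{eq:defCX},
\[
C(X\otimes I_m) = \sum_{i,j=1}^n (e_ie_j^{*}) \otimes (A_j'A_i') = \sum_{i,j=1}^n (e_ie_j^{*}) \otimes (A_jA_i) \otimes I_m,
\]
using $(A_j\otimes I_m)(A_i\otimes I_m) = A_jA_i\otimes I_m$. The right-hand side is exactly $C(X)\otimes I_m$ acting on $\CC^n\otimes\CC^d\otimes\CC^m$. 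The ordering of tensor factors causes no difficulty here: the coefficients are $A_i\otimes I_m$ on $\CC^d\otimes\CC^m$, and $C$ places the $\CC^n$ factor first, so no reordering is even needed.

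Finally, the spectrum of $M\otimes I_m$ coincides with that of $M$, with multiplicities multiplied by $m$. Hence
\[
\lambda_{\max}\bigl(C(X\otimes I_m)\bigr) = \lambda_{\max}\bigl(C(X)\bigr),
\]
and taking square roots gives $\kappa(X\otimes I_m)=\kappa(X)$. The only point requiring care is the bookkeeping of tensor factor order. If one instead wrote the coefficients as $I_m\otimes A_i$, the same identity would hold after conjugating by the swap unitary, which leaves $\lambda_{\max}$ unchanged. I expect no real obstacle in this argument.
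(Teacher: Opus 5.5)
Your proposal is correct and follows the paper's proof, which consists of the single observation $C(X\otimes I_m)=C(X)\otimes I_m$. Your extra remarks, that no reordering of tensor factors is needed and that $M\otimes I_m$ has the same spectrum as $M$, simply fill in the details the paper calls immediate.
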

\begin{proof} 
    This is immediate since $C(X \otimes I_m) = C(X) \otimes I_m$.
\end{proof}

Our first main result, a new strengthened version of the NCK inequality, is the following bound, resembling the bound \eqref{eq:bbvh-nck} of \cite{BBvH23} but with $\kappa(X)$ replacing $v(X)$ and with the logarithmic factor sharpened. Its proof is fully contained in Section~\ref{sec:gaussian}.
\begin{theorem}[Strengthened NCK inequality]
\label{thm:strong-NCK}
    In the above setting,
    \[
        \EE\|X\| \leq 2\,\sigma(X) + 6\,\kappa(X)(\log d)^{1/2}.
    \]
\end{theorem}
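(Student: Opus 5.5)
We work with the equivalent operator $\mathcal{X}$ acting on $L^2(\gamma_n)\otimes\CC^d$, the space of $\CC^d$-valued functions of $g=(g_1,\dots,g_n)$. It acts by multiplication by $X(g)=\sum_i g_iA_i$. In the Hermite basis this space is the symmetric Fock space $\mathcal{F}_s(\CC^n)\otimes\CC^d$, and multiplication by $g_i$ is $a_i+a_i^*$. Here $a_i^*$ is the bosonic creation operator and $a_i$ the annihilation operator, with $[a_i,a_j^*]=\delta_{ij}$. So
\[ \mathcal{X}=\sum_i (a_i^*\otimes A_i)+\sum_i(a_i\otimes A_i)=\mathcal{A}^*+\mathcal{A}. \]
The quantity $\EE\|X\|$ is not the operator norm of $\mathcal{X}$, which is infinite. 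The plan is therefore to use moments: $\EE\|X\|\le(\EE\tr X^{2p})^{1/2p}d^{1/2p}$, with $p\asymp\log d$. The factor $d^{1/2p}$ is then a constant, absorbed into the constants $2$ and $6$.

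Write $\EE\tr X^{2p}=\langle \Omega\otimes\cdot,\mathcal{X}^{2p}\,\Omega\otimes\cdot\rangle$, a normalized trace over the vacuum sector. Then $\mathcal{X}^p(\Omega\otimes v)$ lies in the Fock levels $\le p$. This gives
\[ (\EE\tr X^{2p})^{1/2p}\le \|P_{\le p}\,\mathcal{X}\,P_{\le p}\| \]
up to the trace normalization; more carefully, $\|\mathcal{X}^p(\Omega\otimes\cdot)\|\le\prod_{k<p}\|\mathcal{X}|_{\text{levels}\le k}\|$. So it suffices to bound the norm of $\mathcal{X}$ restricted to, or mapping out of, levels $\le k$, by something like $2\sigma+c\,\kappa\sqrt{k}$. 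Since $\|\mathcal{A}^*+\mathcal{A}\|\le 2\|\mathcal{A}^*|_{\le k}\|$, we reduce to bounding the creation part on level $k$.

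The key computation is $\|\mathcal{A}^*\psi\|^2=\langle\psi,\mathcal{A}\mathcal{A}^*\psi\rangle$. Using the commutation relations,
\[ \mathcal{A}\mathcal{A}^*=\sum_{i,j}a_ia_j^*\otimes A_iA_j=\sum_i 1\otimes A_i^2+\sum_{i,j}a_j^*a_i\otimes A_iA_j. \]
The first term contributes at most $\sigma(X)^2$. The second is a "normal-ordered" term. For $\psi$ on level $k$, write $a_i\psi=\psi_i$, so that $\sum_i\|\psi_i\|^2=k\|\psi\|^2$. The second term then equals $\sum_{i,j}\langle\psi_j,(1\otimes A_iA_j)\psi_i\rangle$, which is exactly a quadratic form of the partial transpose $C(X)$ in the vector $(\psi_i)_i$, tensored with identity on Fock space. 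Its index pairing matches $C(X)$ as in the proof of Proposition~\ref{prop:comparisonKappa}. Bounding it by $\lambda_{\max}(C(X))\sum_i\|\psi_i\|^2$ gives
\[ \|\mathcal{A}^*|_{k}\|^2\le\sigma^2+k\kappa^2. \]
Note that only $\lambda_{\max}$, not the norm, is needed, since $\mathcal{A}\mathcal{A}^*$ is positive. Hence $\|\mathcal{X}|_{\le k}\|\le 2\sqrt{\sigma^2+k\kappa^2}\le 2\sigma+2\kappa\sqrt k$. Taking $k\le p\asymp\log d$ and optimizing the constants gives the theorem.

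The main obstacle is this last step. We must check that the normal-ordered term really yields the partial transpose with the correct ordering $A_jA_i$ and the correct block placement. The cross terms between different levels also need care, because $\mathcal{A}$ and $\mathcal{A}^*$ do not preserve a level. This is handled by the triangle inequality $\|\mathcal{A}+\mathcal{A}^*\|\le\|\mathcal{A}\|+\|\mathcal{A}^*\|$ on the truncated space, with $\|\mathcal{A}|_{\le k}\|=\|\mathcal{A}^*|_{\le k-1}\|$. Finally, the moment-to-norm step must be tracked so that the truncation level $p$ matches $\log d$ and the constants come out as $2$ and $6$.
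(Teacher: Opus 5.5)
Your operator-theoretic core matches the paper's argument. Normal-ordering $\mathcal{A}\mathcal{A}^*$ into $\sum_i A_i^2$ plus the quadratic form of $C(X)$ in $(a_i\psi)_i$ is exactly the Loewner bound of Theorem~\ref{thm:operator-NCK}. Your level-by-level truncation, using $\|\mathcal{A}P_{\le k}\|=\|\mathcal{A}^*P_{\le k-1}\|$, reproduces Corollary~\ref{cor:nck-projection}: $(\EE\tr X^{2p})^{1/2p}\le 2\sqrt{\sigma^2+(p-1)\kappa^2}$. Two small corrections. First, you may use $\lambda_{\max}$ instead of the norm because you only need an \emph{upper} bound on a quadratic form; positivity of $\mathcal{A}\mathcal{A}^*$ plays no role. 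Second, passing from a single level to all levels $\le k$ uses $\lambda_{\max}(C(X))\ge 0$, which holds because the diagonal blocks are $A_i^2\succeq 0$.

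The genuine gap is the final moment-to-norm step, which as described cannot give the leading constant $2$. With $p\asymp\log d$, the factor $d^{1/2p}$ is a constant strictly larger than $1$ (about $e^{1/(2c)}$ for $p=c\log d$). It multiplies the $\sigma$ term as well, so it cannot be absorbed into the $2$. Take a Wigner matrix, where $\kappa\le v(X)=O(1)$ and $\sigma\asymp\sqrt d$. Your chain gives $(2e^{1/(2c)}+o(1))\sigma$, whereas the claim is $2\sigma+6\kappa\sqrt{\log d}=(2+o(1))\sigma$.

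To get the constant $2$ you need $(d^{1/2p}-1)\sigma\lesssim\kappa\sqrt{\log d}$, i.e.\ $p\gtrsim\sigma\sqrt{\log d}/\kappa$, which can be much larger than $\log d$. The paper sets $\ell=\log(2d)$ and $p=\lceil \ell+\sigma\sqrt\ell/\kappa\rceil$ (Lemma~\ref{lem:optimization-scalar}). Then:
\begin{itemize}
\item $d^{1/2p}\le 1+\ell/p$;
\item $\sigma^2+p\kappa^2\le(\sigma+\kappa\sqrt\ell)^2$, using $\kappa\le\sigma$ from Proposition~\ref{prop:comparisonKappa};
\item $(\ell/p)(\sigma+\kappa\sqrt\ell)\le\kappa\sqrt\ell$.
\end{itemize}
Together these give $\EE\|X\|\le 2\sigma+4\kappa\sqrt{\log(2d)}\le 2\sigma+6\kappa\sqrt{\log d}$.

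This works only if you keep the unsplit form $2\sqrt{\sigma^2+p\kappa^2}$. If you first weaken it to $2\sigma+2\kappa\sqrt p$, as you do, then optimizing $2\sigma\ell/p+2\kappa\sqrt p$ over $p$ leaves a cross term of order $(\sigma\kappa^2\log d)^{1/3}$. That term is not bounded by $C\kappa\sqrt{\log d}$ once $\kappa\ll\sigma/\sqrt{\log d}$.
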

\noindent

In Section~\ref{sec:cayley}, we give an illustrative example of an application where our bounds give superior estimates to both the NCK inequality (Theorem~\ref{thm:nck}) and the estimates of \cite{BBvH23} (Theorem~\ref{thm:bbvh}).
This is to the \emph{Gaussian Cayley matrices} of \cite{BKMZ24-Cayley}, a group-structured Gaussian series model proposed in that work as a ``challenge case'' for matrix concentration inequalities.
Indeed, we show that the bounds of \cite{BBvH23} behave identically to the NCK inequality in this case, while our improved NCK inequality (Theorem~\ref{thm:strong-NCK}) and the parameter $\kappa(X)$ are sensitive to the particular group structure of such a model and in particular the profile of dimensions of irreducible representations of a group, which may be viewed as a measurement of non-commutativity.

Our tools also provide a bound with the leading term $\|\Xfree\|$, providing an improvement to the bound~\eqref{eq:bbvh-Xfree} of \cite{BBvH23}. 
As in~\cite{BBvH23}, this bound depends on the $v(X)$ parameter.

\begin{theorem}[Strengthened intrinsic freeness NCK inequality]
    \label{thm:strong-intrinsic}
    In the above setting,
\begin{equation}\label{eq:almostprovesBBvHconj}
\EE\|X\| \leq \|\mathcal X_{\mathrm{free}}\|\left(1 + 8\left(\frac{v(X)^2}{\sigma(X)^2}\log d\right)^{1/2} + 8\left(\frac{v(X)^2}{\sigma(X)^2}\log d\right)^{1/3}\right).
\end{equation} 
\end{theorem}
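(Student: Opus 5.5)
We plan to prove Theorem~\ref{thm:strong-intrinsic} with the equivalent operator $\mathcal{X}$ introduced above. Let $L^2(\gamma_n;\CC^d)$ be the space of $\CC^d$-valued square-integrable functions of $g=(g_1,\dots,g_n)$, and let $\mathcal{X}$ act on it by multiplication by $X(g)$. In the Hermite (symmetric Fock space) basis, $\mathcal{X} = \sum_i A_i\otimes(a_i + a_i^{*})$, where $a_i,a_i^*$ are the bosonic annihilation and creation operators. The operator $\Xfree$ has the same form, with full Fock space creation and annihilation operators $\ell_i,\ell_i^*$ in place of $a_i,a_i^*$.

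The first step is a moment reduction. For $p \asymp \log d$ we have $\EE\|X\| \le (\EE\tr X^{2p})^{1/2p} d^{1/2p}$. We write $\EE\tr X^{2p} = \frac1d\sum_a \langle e_a\otimes\Omega, \mathcal{X}^{2p} e_a\otimes\Omega\rangle$, where $\Omega$ is the vacuum. The norm $\|\mathcal{X}^p(e_a\otimes \Omega)\|$ only involves the restriction of $\mathcal{X}$ to the Fock levels $0,\dots,p$. It therefore suffices to bound the norm of $\mathcal{X}$ compressed to the low-order subspace $\mathcal{F}_{\le p}$, namely
\[ \|P_{\le p}\mathcal{X}P_{\le p}\| \le \|\Xfree\|(1+\text{error}). \]
The factor $d^{1/2p}$ is then harmless: it contributes a $(1+O(\tfrac{\log d}{p}))$ factor, and we are free to trade it off by taking $p$ as a large multiple of $\log d$.

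The second step compares the bosonic and free structures on $\mathcal{F}_{\le p}$. The symmetric Fock space is the image of the full Fock space under symmetrization, and bosonic creation on level $k$ equals $\sqrt{k+1}$ times the symmetrized free creation. Writing $\mathcal{X}_{\le p} = \sum_i A_i\otimes(a_i+a_i^*)$, we decompose each $a_i^*$ on level $k$ into a "free part", acting by adding a tensor letter at the left, plus a correction coming from placing the new letter in the other positions. Concretely, we would use the representation of level $k$ of the symmetric space inside $\CC^d\otimes(\CC^n)^{\otimes k}$. The action of $\sum_i A_i\otimes a_i^*$ then becomes a sum over insertion positions $j=0,\dots,k$, each normalized by $1/\sqrt{k+1}$. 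The position-$0$ term together with its adjoint reproduces $\Xfree$-like blocks whose norm is at most $\|\Xfree\|$, after a Lehner-type upper bound argument which we expect to transfer to compressions. Each other position $j$ moves the matrix $A_i$ past $j$ tensor letters. We will bound these "crossing" contributions in terms of $v(X)$. The key estimate is that a term in which a coefficient is contracted out of order has operator norm at most $v(X)$ rather than $\sigma(X)$. This uses the computation from Proposition~\ref{prop:comparisonKappa}, where out-of-order contractions were bounded via $\sum_a\|B_a\|_F^2\le v(X)^2\|u\|^2$.

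The third step is the accounting, which we expect to be the main obstacle. One must show that the combined contribution of the corrections gives relative error of order $(v^2 p/\sigma^2)^{1/2}$ plus a term of order $(v^2 p/\sigma^2)^{1/3}$. Naively, the $k$ nonfree insertion positions contribute up to $k\,v/\sqrt{k+1}\approx\sqrt{k}\,v$, which gives the first error term with $k\le p$ after dividing by $\|\Xfree\|\ge\sigma$. The cube-root term should arise from optimizing $p$: the moment bound gives $\|\mathcal{X}_{\le p}\|\,d^{1/2p}$. Balancing $e^{\log d/(2p)}-1\approx \log d/p$ against $\sqrt{p}\,v/\sigma$ yields $p\approx(\log d)^{2/3}(\sigma/v)^{2/3}$ in the regime where this $p$ is at least $\log d$, and hence an error of order $(v^2\log d/\sigma^2)^{1/3}$. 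Otherwise we take $p=\log d$, which produces the square-root term. The delicate parts are making the position-$j$ corrections genuinely bounded by $v$, which may require grouping them into a single operator via a Cauchy--Schwarz or partial transpose argument, and tracking constants to reach $8$.
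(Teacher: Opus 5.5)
The gap is in your second step, and it is structural rather than a matter of bookkeeping. Let $T_j\colonequals\sum_i A_i\otimes\iota_j(e_i)$ insert the new letter at position $j$, so that your decomposition reads $\sum_iA_i\otimes a_i^*=(k+1)^{-1/2}\sum_{j=0}^kT_j$ on $\CC^d\otimes\mathrm{Sym}^k$. Since $\iota_j(e_i)=U_j\,\iota_0(e_i)$ for a permutation unitary $U_j$ of the $k+1$ tensor slots, and $U_j$ does not touch the $\CC^d$ factor, you have $T_j=(I_d\otimes U_j)T_0$. The free relations give $T_0^*T_0=\sum_iA_i^2\otimes\mathbf 1$. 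Hence every insertion term, crossing or not, has norm exactly $\sigma(X)/\sqrt{k+1}$, and there is no term-wise bound by $v(X)$. The triangle inequality over the $k+1$ positions then returns only $\sqrt{k+1}\,\sigma(X)$, which is plain NCK. The position-$0$ piece carries the weight $(k+1)^{-1/2}$ and maps out of the symmetric space, so it does not reproduce $\Xfree$. A less naive additive comparison also fails. Let $J$ be the isometric inclusion of $\CC^d\otimes\mathcal F_{\mathrm{sym}}$ into $\CC^d\otimes\mathcal F(\CC^n)$. Then $J^*\Xfree J=\sL(N+1)^{-1/2}+(N+1)^{-1/2}\sL^*$, where $\sL$ is the bosonic creation part and $N$ the number operator. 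So $\mathcal X-J^*\Xfree J$ has norm at least $(1-2^{-1/2})\|\sL|_{\text{level }1}\|$. For a Wigner matrix this is of order $\sigma(X)$, while $v(X)\sqrt p\to0$. The estimate in Proposition~\ref{prop:comparisonKappa} controls a \emph{contraction}: the out-of-order term produced when an annihilation operator is commuted past a creation operator. It is invisible in individual creation terms.

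The missing idea is to get both the $v(X)$-gain and the link to $\Xfree$ in Loewner order, never comparing the bosonic and free operators directly. Set $\mathcal Z=Z\otimes\mathbf 1$. Expanding $0\preceq(\mathcal Z^{1/2}\sL-\mathcal Z^{-1/2})^*(\mathcal Z^{1/2}\sL-\mathcal Z^{-1/2})$ gives $\sL+\sL^*\preceq\mathcal Z^{-1}+\sL^*\mathcal Z\sL$. The bosonic relations rewrite $\sL^*\mathcal Z\sL$ as $(\sum_iA_iZA_i)\otimes\mathbf 1$ plus the out-of-order term $\sum_{i,j}A_jZA_i\otimes\ell_i\ell_j^*\preceq\|C_X(Z)\|\,I\otimes N$, where $C_X(Z)=[A_jZA_i]_{i,j=1}^n$. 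This is where your partial-transpose intuition actually lives: Lemma~\ref{lem:crossing} gives $\|C_X(Z)\|\le v'(X)^2\|Z\|$, and Lemma~\ref{lem:boundParticle} gives $\|N|_p\|\le p$. Compressing to at most $p$ particles (and doing the same with $-A_i$) then yields $(\EE\tr X^{2p})^{1/(2p)}\le\inf_{Z\succ0}\{\lambda_{\max}(Z^{-1}+\sum_iA_iZA_i)+p\,v'(X)^2\|Z\|\}$. The first part of this objective is exactly Lehner's. Passing to $\|\Xfree\|$ requires the direction $\inf_Z\lambda_{\max}(Z^{-1}+\sum_iA_iZA_i)\le\|\Xfree\|$ of Lehner's formula~\eqref{eq:road_lehner}; this is an input, not something a Lehner-type upper-bound argument supplies. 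Finally, replace a near-optimal Lehner candidate $Z$ by $Z_t=(tI+Z^{-1})^{-1}$, which satisfies $Z_t\preceq Z$, $Z_t^{-1}=Z^{-1}+tI$ and $\|Z_t\|\le 1/t$. This raises the objective by at most $t+p\,v'(X)^2/t$, and $t=v'(X)\sqrt p$ gives $\|\Xfree\|+2v'(X)\sqrt p$. Your third step is fine and is essentially Lemma~\ref{lem:optimization-free}: choosing $p\approx\log d+(\|\Xfree\|\log d/v)^{2/3}$ produces the square-root and cube-root terms. The gap lies entirely in obtaining the additive $2v\sqrt p$ error.
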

\noindent 
In particular, this implies $\EE\|X\| \le (1 + o(1))\|\mathcal X_\free\|$ whenever $v(X)/\sigma(X) \ll (\log d)^{-1/2}$. 
We also note that, both in~\eqref{eq:bbvh-nck} and~\eqref{eq:almostprovesBBvHconj}, $v(X)$ can be replaced by 
     \(
        v^{\prime}(X) \colonequals \min\{\sigma(X), v(X)\} 
    \)        
by simply using the NCK inequality if $v(X)\geq \sigma(X)$. The proof of Theorem~\ref{thm:strong-intrinsic} can be found in Section~\ref{sec:xfreeBound}.

 The authors of~\cite{BBvH23} posed the question of whether a bound on Gaussian series norms holds of the form $\EE\|X\| \leq \|\mathcal X_{\mathrm{free}}\| + C\,v(X)(\log d)^{1/2}$ for an absolute constant $C \geq 0$, which corresponds to~\eqref{eq:almostprovesBBvHconj} without the last term inside the parentheses.
Theorem~\ref{thm:strong-intrinsic} is a partial answer to this question:~\eqref{eq:almostprovesBBvHconj} both provides the same asymptotic sufficient condition $v(X)/\sigma(X) \ll (\log d)^{-1/2}$ to guarantee $\EE\|X\| \le (1+o(1))\|\Xfree\|$, and gives the same bound (up to constants) when instead we have $v(X)/\sigma(X) \gtrsim (\log d)^{-1/2}$. 
The difference between the claims is only in how quickly the $o(1)$ term in the former case decays.

\paragraph{Lehner-type operator norm bounds}
Lastly, our technique also implies bounds on certain operators of interest in the study of operator algebras beyond their role in random matrix theory.
The bounds we obtain are relatives of Lehner's formula \cite{Leh99} for $\|\mathcal{X}_{\free}\|$ mentioned above.
This formula reads:
\begin{equation}\label{eq:road_lehner}
    \norm{A_0 \otimes \mathbf{1} + \sum_{i=1}^n A_i \otimes s_i}
    =\max_{\eps\in \{\pm 1\}}\inf_{Z\succ0}\lambda_{\max}\brap{\eps A_0+Z^{-1}+\sum_{i=1}^n A_iZA_i}\,,
\end{equation}
where the $s_i$ are freely independent semicircular elements and $\mathbf{1}$ is the identity operator acting on the same Hilbert space as the $s_i$.
We give a new simple proof of the upper bound of this result, which also generalizes to a broad class of $s_i$ as follows.
See Theorem~\ref{thm:variational} for an expanded version.
\begin{theorem}[Generalized Lehner-type norm bound]\label{thm:generalizedLehnerIntro}
    Let $\Gamma \in M_n(\R)_\sa$ with $\Gamma_{ij}\in [-1,1], \Gamma_{i,i} < 1$ for $i,j \in [n]$, $A_0,\ldots, A_n \in M_d(\C)_\sa$, $\ell_1,\ldots, \ell_n$ be the creation operators on the $\Gamma$-Fock space (Definition~\ref{def:gammaFock}), and $\sX = A_0\otimes \mathbf 1 +\sum_{i=1}^nA_i\otimes (\ell_i+\ell_i^*)$. 
    Let $N_{\Gamma}$ be the \emph{number operator} (Definition~\ref{def:Gamma-N}) associated to these operators.
    For each $Z \in M_d(\CC)_{\sa}$ with $Z \succ 0$, define the block matrix
    \[
        C_{\Gamma}(Z) \colonequals [\Gamma_{ij} \cdot A_j Z A_i]_{i, j = 1}^n \in M_{dn}(\C)\,.
    \]
    Then
    \[
        \|\mathcal{X}\| \leq \max_{\eps\in \{\pm 1\}} \inf_{Z\succ 0} \left\{\lambda_{\max} \left(\eps A_0 + \sum_{i=1}^n A_i Z A_i + Z^{-1}\right) + \lambda_{\max}^+(C_\Gamma(Z)) \norm{N_\Gamma}\right\}\,. 
    \]
\end{theorem}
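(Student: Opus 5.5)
The plan is to bound $\lambda_{\max}(\mathcal X)$ from above. Applying the same argument to $-\mathcal X$, which has the same form with $A_i$ replaced by $-A_i$, handles $-\lambda_{\min}(\mathcal X)$; this is where $\max_{\eps}$ comes from, since $C_\Gamma(Z)$ and $\sum_i A_iZA_i$ are invariant under $A_i\mapsto -A_i$. Write
\[
T \colonequals \sum_{i=1}^n A_i\otimes \ell_i^*,
\qquad\text{so that}\qquad
\mathcal X = A_0\otimes\mathbf 1 + T + T^*.
\]
For any $Z\succ 0$, set $W = Z\otimes \mathbf 1$. From
\[
\bigl(TW^{1/2}-W^{-1/2}\bigr)\bigl(TW^{1/2}-W^{-1/2}\bigr)^*\succeq 0
\]
we obtain the operator inequality
\[
T+T^* \preceq TWT^* + W^{-1} = \sum_{i,j=1}^n A_iZA_j\otimes \ell_i^*\ell_j + Z^{-1}\otimes \mathbf 1 .
\]
Thus everything reduces to understanding the products $\ell_i^*\ell_j$ on the $\Gamma$-Fock space.

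Next, use the commutation structure of the $\Gamma$-Fock space (Definition~\ref{def:gammaFock}). I expect it to give a decomposition of the form
\[
\ell_i^*\ell_j = \delta_{ij}\mathbf 1 + \Gamma_{ij}\, \ell_j\ell_i^* ,
\]
or, in a general $\Gamma$-twisted model, a correction term built from creation and annihilation operators whose overall size is controlled by $N_\Gamma$. The diagonal part contributes $\sum_i A_iZA_i\otimes\mathbf 1$. Combined with $A_0\otimes\mathbf 1$ and $Z^{-1}\otimes\mathbf 1$, this produces the term $\lambda_{\max}\bigl(A_0+\sum_i A_iZA_i+Z^{-1}\bigr)$ in the bound.

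The remaining term is
\[
\mathcal R \colonequals \sum_{i,j=1}^n \Gamma_{ij}\, A_iZA_j\otimes \ell_j\ell_i^* .
\]
To control it, introduce the column operator $V\colon \CC^d\otimes\mathcal F \to \CC^n\otimes \CC^d\otimes \mathcal F$ given by $V = \sum_i e_i\otimes(\mathbf 1\otimes \ell_i^*)$. Then, up to the correct ordering of indices (which is exactly why the partial-transposed block matrix $[\Gamma_{ij}A_jZA_i]$ appears rather than $[\Gamma_{ij}A_iZA_j]$), we have
\[
\mathcal R = V^*\bigl(C_\Gamma(Z)\otimes\mathbf 1\bigr)V .
\]
Since $C_\Gamma(Z)\preceq \lambda^+_{\max}(C_\Gamma(Z))\,\mathbf 1$, it follows that
\[
\mathcal R \preceq \lambda_{\max}^+\bigl(C_\Gamma(Z)\bigr)\,V^*V ,
\qquad
V^*V = \mathbf 1\otimes \sum_i \ell_i\ell_i^* .
\]
The final step is to identify or bound $\sum_i\ell_i\ell_i^*$ by $N_\Gamma$, presumably via Definition~\ref{def:Gamma-N}, to get $\mathcal R\preceq \lambda_{\max}^+(C_\Gamma(Z))\|N_\Gamma\|$. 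Adding the pieces and taking the infimum over $Z$ gives the claim.

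The main obstacle is the middle step: extracting the exact algebraic identity for $\ell_i^*\ell_j$ on the $\Gamma$-Fock space and then arranging the correction term as a compression $V^*(\,\cdot\,)V$. This requires checking two things. First, the ordering of $\ell_j\ell_i^*$ versus the placement of $A_i, A_j$ must indeed produce $C_\Gamma(Z)$. Second, $V^*V$ must genuinely be dominated by the number operator, since the hypothesis $\Gamma_{ii}<1$ is presumably what makes $\|N_\Gamma\|$ finite. If the commutation relation is not of this clean form, I would instead compute $\ell_i^*\ell_j$ directly on simple tensors, level by level in the Fock space, and define $V$ accordingly.
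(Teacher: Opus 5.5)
Your proposal is correct and is essentially the paper's proof (Proposition~\ref{prop:road_master} followed by the $\pm$ argument): your $T$ is the paper's $\mathcal{L}^*$, so your inequality $(TW^{1/2}-W^{-1/2})(TW^{1/2}-W^{-1/2})^*\succeq 0$ is literally the paper's $(\mathcal Z^{1/2}\mathcal L-\mathcal Z^{-1/2})^*(\mathcal Z^{1/2}\mathcal L-\mathcal Z^{-1/2})\succeq 0$, and your factorization through $C_\Gamma(Z)$ and the use of $\lambda_{\max}^+$ match the paper. The steps you flag as uncertain are settled directly: the creation operators on the $\Gamma$-Fock space satisfy exactly the $\Gamma$-CCR \eqref{eq:ccr}, and $N_\Gamma=\sum_i\ell_i\ell_i^*$ by Definition~\ref{def:Gamma-N}, so $V^*V=I_d\otimes N_\Gamma$ holds with equality, and the symmetry of $\Gamma$ handles the index swap.
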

In the case $\Gamma \equiv 0$, the $\ell_i + \ell_i^*$ are freely independent semicircular elements, and so we indeed recover the upper bound of Lehner's formula since $C_{\Gamma}(Z) = 0$ for all $Z$ in this case.
Also, we note that if we have $\Gamma \equiv 1$ then $C_{\Gamma}(I_d)$ is the same as the matrix $C(X)$ for $X = \sum_{i = 1}^n g_i A_i$ featuring in the definition of $\kappa(X)$.
The above does not apply to this case and indeed in this case $N_{\Gamma}$ is unbounded, but the proof technique is entirely analogous to our proof of Theorem~\ref{thm:strong-NCK}.
For other choices of $\Gamma$, we obtain operator bounds relevant to the literature on \emph{$q$-Gaussian systems} (when $\Gamma \equiv q \in [-1, 1]$, as in Example~\ref{ex:qGaussian}) and \emph{$\Gamma$-independent systems} (when $\Gamma$ is the adjacency matrix of a simple graph, i.e.\ has $\Gamma_{i,i} = 0$ and $\Gamma_{i,j} \in \{0, 1\}$, as in Example~\ref{ex:traceMonoid}).

Finally, we note that we do not derive a single master inequality that implies all of our bounds simultaneously, and it is likely that the tools developed here may be useful in combinations beyond those considered in this paper. 
For instance, while improved versions of the matrix Bernstein inequality for sums of independent random matrices (see~\cite[Theorem 1.6.2]{T15}) can be derived immediately by combining our Theorem~\ref{thm:strong-NCK} with the universality principles of~\cite{BvH24,Tropp26}, in a forthcoming work we will establish further sharpened universality principles and Bernstein-type inequalities by operator-theoretic arguments.

\subsection{Proof techniques}

Our proof ideas apply broadly to operator series, or so-called matrix \emph{pencils}, of the form
\begin{equation}
    \label{eq:pencil}
    \sX = \sum_{i = 1}^n A_i \otimes (\ell_i + \ell_i^*)\,,
\end{equation}
for some collection of pairs of adjoint operators $\ell_i, \ell_i^*$ on a Hilbert space $\sH$ and $A_i \in M_d(\CC)_{\sa}$.
As we discuss below in Section~\ref{sec:gaussian}, a random matrix with Gaussian coefficients as above is a special case of this construction.
In this case, the $\ell_i$ are the \emph{creation  operators} and $\ell_i^*$ the associated \emph{annihilation operators} on a \emph{symmetric Fock space}, related through the \emph{canonical commutation relations}.
Alternatively, they can be viewed as operators on polynomials involving multiplications and differentiations associated to the Hermite polynomials. These operators play an important role in QFT, as we will briefly discuss in Section~\ref{sec:gaussian}.

We view such a pencil $\sX$ as an operator \emph{equivalent} to the corresponding Gaussian random matrix $X = \sum_{i = 1}^n g_iA_i$. For example, expected trace
powers of the random matrix can be computed in terms of the operator $\sX$: for all $p \geq 1$,
\[
    \EE \tr X^p = \tr\otimes \tau(\sX^p)\,,
\]
where, for $x$ an operator on $\sH$, $\tau(x) \defeq \langle \Omega,x \Omega\rangle$ for a suitable \emph{vacuum element} $\Omega \in \sH$.
Here $\tr$ is the normalized trace of matrices, so that the tensor product $\tr \otimes \tau$ is a linear functional on $M_d(\CC) \otimes \mathcal{A}$ for suitable algebras $\mathcal{A}$ of operators on $\mathcal{H}$.

Our toolkit centers around bounds in the Loewner ordering on operators of the form \eqref{eq:pencil}.
On the one hand, applying such bounds to the equivalent operators of random matrices readily leads
to bounds on expected norms.
On the other hand, other operators of interest such as $\Xfree$ in the theory of \cite{BBvH23} also have the form \eqref{eq:pencil}, and our techniques apply equally well to those and to several other operators, including the $\Gamma$-independent
and $q$-Gaussian models mentioned above, illuminating the analogies between all these settings.

The basic approach to deducing these bounds on $\sX$ is to partition the pencil into its creation and annihilation  parts:
\begin{equation}\label{eq:decomposition}
    \sX = \sL + \sL^*\,, \qquad \sL = \sum_{i = 1}^n A_i \otimes \ell_i\,.
\end{equation}
To utilize commutation relations between the two parts, we analyze the products $\sL\sL^*$ or $\sL^*\sL$, or variants such as $\sL^* \sZ \sL + \sZ^{-1}$ with an auxiliary weight operator $\sZ \succ 0$.
Expanding such products, in the simplest case $\sL^*\sL$, then leads to products $(A_i \otimes \ell_i)^* (A_j \otimes \ell_j) = (A_i A_j) \otimes (\ell_i^\ast \ell_j)$.
This in turn allows us to use the algebraic structure of the $\ell_i$ to simplify and in particular to take advantage of various notions of non-commutativity.

\subsection{Related work}

\paragraph{Other recent progress on norm bounds} 
The idea of analyzing the norm of an operator by
decomposing it as $\mathcal L+\mathcal L^\ast$ (as in~\eqref{eq:decomposition}) and then exploiting relations
between $\mathcal L\mathcal L^\ast$ and
$\mathcal L^\ast \mathcal L$ has appeared
very recently in two works: in
Caspers and Chen's construction of
strongly convergent matrix models for
$q$-Gaussian systems~\cite[Lemma~3.10]{strongQGaussian}, and in Basu, Kothari
and Midha's bound on the norm of the Sachdev-Ye-Kitaev (SYK) Hamiltonian~\cite[Lemma~4.9]{SYKPravesh}.
Both examples can be viewed as special instantiations of our framework (see Theorem~\ref{thm:q-proj}), corresponding to the particular commutation relations arising in those applications.

Further, we note that the original problem studied in~\cite{SYKPravesh} is itself a random matrix problem, and that their argument similarly proceeds by representing the random matrix by a deterministic operator.
However, the deterministic representation obtained in~\cite{SYKPravesh} is specific to the SYK model, and differs from the natural representation arising through our approach; see Remark~\ref{rem:syk}. The general strategy of representing a random matrix by a deterministic operator and analyzing the latter also appears in Heidary's recent work on the Nelson-Nguyen conjecture~\cite{Heidary-2026-SparseStackOptimalObliviousSubspaceEmbedding}.

\paragraph{$\Gamma$-independent and $q$-Gaussian systems} 
Two important special cases of the general commutation relations considered in this work are $\Gamma$-independent\footnote{Also known as $\varepsilon$-independent or $\Lambda$-free.} semicircular systems~\cite{lambdaFreeness,gammaIndependenceDefinition} and $q$-Gaussian systems~\cite{brownianBozejko,qGaussianSupport,qGaussianReloaded}; see Examples~\ref{ex:qGaussian} and~\ref{ex:traceMonoid}. 
Noncommutative Khintchine inequalities for
$\Gamma$-independent semicircular systems were
established by Collins and Miyagawa~\cite{collinsEigenvalueBound} and
Oliveira Santos, Tripathi, and Youssef~\cite{youssefTuran}.
The argument of~\cite{collinsEigenvalueBound} also uses the decomposition~\eqref{eq:decomposition}.
Building on a bound from~\cite{youssefTuran} (Lemma~\ref{lem:clique-bound}), our Corollary~\ref{cor:Gamma-nck} gives a sharper noncommutative Khintchine inequality in this setting. 
A different application of the same framework yields Corollary~\ref{cor:Gamma-scalar}, which is a key ingredient in the proof of strong convergence of the Kikuchi matrix to a $q$-Gaussian variable in our companion work~\cite{BKNPW2026-Paper1}.

\paragraph{Fock space methods}
The representation of Gaussian random variables in terms of creation and annihilation operators is classical in the literature on Wiener chaos~\cite{Ito-1951-MultipleWienerIntegral} and Fock spaces~\cite{Segal-1956-TensorAlgebrasI}. This is the basis of the approach
that we use in Section~\ref{sec:gaussian} to represent a Gaussian random matrix by an equivalent deterministic operator.
In our setting, these operators appear with matrix rather than scalar coefficients; our main new ingredient is to introduce inequalities in the Loewner order that exploit the noncommutativity of these coefficients (see, e.g., Theorem~\ref{thm:operator-NCK}).

\subsection{Organization of the paper}

The remainder of the paper is organized as follows.
Section~\ref{sec:gaussian} introduces the method of equivalent operators within the more familiar setting of Gaussian random variables and linear operators defined on polynomials.
It proves the strengthened NCK inequality (Theorem~\ref{thm:strong-NCK}) and we hope for it also to serve as an accessible motivation for the role of notions of non-commutative probability in our setting while focusing on classical objects.
Sections~\ref{sec:fock} and~\ref{sec:general_norm_bounds} introduce important notions of non-commutative probability and derive inequalities for several classes of non-commutative variables (realized as operators) using our method.
While these objects are less familiar, we aim for these sections to be accessible to a reader without background in non-commutative probability. 
Section~\ref{sec:appl} develops some applications of these results.

\subsection{Notation}

We use the Kronecker notation $\delta_{ij} = 1$ if $i = j$ and 0 otherwise.
The standard basis of $\RR^d$ or $\CC^d$ is denoted $e_1, \dots, e_d$, where $(e_i)_j = \delta_{ij}$.
We write $\chi = \sum_{i = 1}^d e_i \in \RR^d$ for the all-ones vector; its dimension is clear from context when it appears.
We write $M_d(\mathbb{F})$ for the set of $d \times d$ matrices with entries in a field $\mathbb{F}$, $M_d(\mathbb{F})_{\sa}$ for the set of self-adjoint such matrices, and $M_{d_1, d_2}(\mathbb{F})$ for the set of $d_1 \times d_2$ matrices with entries in $\mathbb{F}$.
For $A \in M_d(\mathbb{C})_{\sa}$, we write $\lambda_{\max}(A)$ for the largest eigenvalue of $A$ and $\lambda_{\max}^+(A) \colonequals \max\{0, \lambda_{\max}(A)\}$.
Further, $\Tr(A) \defeq \sum_{i=1}^d A_{ii}$
and $\tr(A)\defeq \frac 1d \Tr(A)$ will respectively
denote the unnormalized and normalized trace.
We write $I_d \in M_d(\RR)$ for the $d$-dimensional identity matrix, while $\mathbf{1}$ is the identity operator in various Hilbert spaces that are clear from context.
We use calligraphic letters ($\mathcal{X}$, $\mathcal{L}$, ...) for operators on tensor spaces $\CC^d \otimes \mathcal{H}$ for a Hilbert space $\mathcal{H}$, and ordinary letters for other matrices and operators.

\subsection{Acknowledgments and statement on AI}

We thank Joel Tropp for comments on an early draft manuscript including some of the results appearing in this work.

LP acknowledges support from an ETH Z\"urich Postdoctoral Fellowship (Project 26-1 FEL-092).  RW's work on this project was supported by the Queen Elizabeth II Graduate Scholarship in Science and Technology (QEII-GSST). Part of this work was done while RW was an academic guest in the Department of Mathematics at ETH Z\"urich.
PNN and RW thank the Simons Institute for the Theory of Computing and the organizers of the \emph{Spectral Theory Beyond Graphs} program.

AI assistants were used, over the course of several months, in the process of developing the ideas of working with equivalent operators, and later in proofreading the paper. 
Our original motivation came from our efforts to understand Kikuchi matrices through $\Gamma$-independent limits, Feige's hypergraph Moore bound conjecture, and~\cite[Conjecture 9]{bandeira2025randomstrasse101} (see our companion work~\cite{BKNPW2026-Paper1}). 
We remark, however, that several of what we view as the most interesting outcomes of this research effort were obtained only after both Feige's conjecture and~\cite[Conjecture 9]{bandeira2025randomstrasse101} were already solved in~\cite{kothari2025smoothtradeofftensorpca,BKNPW2026-Moore,schmidhuberHastings2026}.

The authors wrote the paper,  verified all of its contents, and all errors are theirs.

\section{Gaussian matrix concentration via Hermite polynomials}
\label{sec:gaussian}

While our technique also provides norm bounds for other operators, including $\Xfree$ and a similar construction from $\Gamma$-independent families, we start by focusing on the Gaussian series model~\eqref{eq:gauss-series} and proving Theorem~\ref{thm:strong-NCK}.

As sketched above, the general premise of our proof technique in this setting is to associate to a \emph{random matrix} a deterministic \emph{equivalent operator}, which in particular captures the combinatorics of expected traces of powers of the matrix.
In general, these operators in this case and the other cases below can be constructed over Fock spaces from creation and annihilation operators, which we introduce in Section~\ref{sec:fock}.
For Gaussian random matrices, however, these operators can be described in a straightforward way as differential operators acting on polynomials, associated to simple identities satisfied by Hermite polynomials and Gaussian integration.
We follow this simplified approach in this section for the sake of exposition.

\subsection{Creation and annihilation operators over Hermite polynomials}\label{sec:prelude}

We first describe the operator-theoretic viewpoint on Hermite polynomials and Gaussian combinatorics in one dimension.
Let $\gamma$ denote the standard Gaussian measure on $\RR$.
We write $\EE$ for expectations with respect to $g \sim \gamma$.
The space of polynomials $\RR[g]$ forms a Hilbert space when equipped with the standard inner product $\inner{f}{h} \colonequals \E[f(g)h(g)]$.
The Hermite polynomials, $\{h_k\}_{k=0}^{\infty}$, form a natural orthonormal basis for $\RR[g]$ under this inner product. 
They can be defined via a three-term recurrence: $h_0 = 1$, and
\begin{equation}\label{eq:3termrecurrenceh_k}
gh_k(g) = \sqrt{k+1}h_{k+1}(g)+\sqrt{k}h_{k-1}(g)\,.
\end{equation}
From these definitions, it follows that expectations can be viewed as inner products with $h_0$, which we will use below:
\[ \EE[f(g)] = \langle f, h_0 \rangle\,. \]

Equation~\eqref{eq:3termrecurrenceh_k} implies that the operation of multiplication by $g$ can be described by the action of two linear operators on $\RR[g]$: one raises the degree of each Hermite polynomial while the other lowers the degree.
We define these operators $\ell,\ell^*:\RR[g]\rightarrow \RR[g]$ by the following action on the Hermite basis: 
\[
    \ell h_k = \sqrt{k+1}h_{k+1}\,,\qquad \ell^\ast  h_k = \sqrt{k}h_{k-1}\,, \qquad \ell^\ast  h_0=0\,.
\]
The operator $\ell$ is called a \emph{creation operator} and $\ell^\ast$ an \emph{annihilation operator}.
The operators $\ell$ and $\ell^\ast$ are indeed adjoint as $\langle \ell h_{k_1}, h_{k_2} \rangle = \langle  h_{k_1}, \ell^\ast h_{k_2} \rangle$ for all $k_1,k_2\geq 0$.

One can also interpret the creation and annihilation operators as differential operators: it follows from standard properties of the Hermite polynomials that $\ell^\ast = \partial_g$ and $\ell = g-\partial_g$, where ``$g$'' denotes the operator of multiplication by $g$ on polynomials.
The fact that they are adjoint corresponds to the Gaussian integration by parts identity:
\[
    \int \partial_g f(g)\, \mathrm d\gamma(g) = \langle \ell^\ast f,h_0 \rangle = \langle f,\ell h_0 \rangle =  \int gf(g) \,\mathrm d\gamma(g)\,.
\]
The three-term recurrence~\eqref{eq:3termrecurrenceh_k} can then be thought of as $g = \ell + \ell^* =  (g - \partial_g) + \partial_g$.

Below we will be interested in expected traces of powers of Gaussian series models.
As a simpler illustration of the same principle, here we consider deriving the moments of a scalar Gaussian random variable.
We do so in terms of a linear functional on operators called the \emph{vacuum state},
\[ \tau(a) \colonequals \inner{h_0}{ah_0}\,. \]
According to the above identification $g = \ell + \ell^*$, this allows us to express the Gaussian moments in terms of the vacuum state and powers of the operator on the right-hand side:
\[
    \E[g^p] = \inner{h_0}{g^p} = \inner{h_0}{(\ell+\ell^*)^ph_0} = \tau((\ell+\ell^*)^p)\,.
\]
For this reason we call $\ell + \ell^*$ an \emph{equivalent operator} to the scalar random variable $g$ (with respect to the state functional $\tau$).

By linearity of the state functional, we may expand the right-hand side in terms of states of products of $\ell$ and $\ell^*$.
The defining feature of $\ell$ and $\ell^*$, also useful in working with such expressions, is their commutation relation.
Observe that their commutator acts by $\ell^\ast \ell h_k - \ell\ell^\ast h_k = (k+1)h_k - k h_k = h_k$. Thus, we have
\[
    \ell^\ast \ell- \ell\ell^\ast = \mathbf{1}\,.
\]
In QFT, this relation is known as the \emph{bosonic commutation relation}~\cite{derezinski2013mathematics}.
As one application, using this relation repeatedly, one may prove the familiar formula for the moments of $g$.\footnote{From the definitions of $\tau, \ell$, and $\ell^*$ we have that, for any operator $a$, $\tau(a\ell^*) = \tau(\ell a) = 0$. The bosonic commutation relation gives $\tau((\ell + \ell^*)^p) = \tau(\ell^\ast (\ell + \ell^*)^{p - 1}) = (p - 1)\tau((\ell + \ell^*)^{p - 2})$ upon commuting $\ell^*$ to the right $p - 1$ times. This gives by induction that $\tau((\ell + \ell^*)^p) = (p - 1)!!$ for $p$ even and zero otherwise.}

\subsection{Multivariate Gaussians and equivalent operators of random matrices}
\label{sec:equivalentGaussian}

The previous construction can easily be extended to a family of independent Gaussian variables. 
Let $\gamma_n$ be the standard Gaussian measure on $\R^n$.
For each $k_1, \dots, k_n$, write $h_{k_1, \dots, k_n}(g_1, \dots, g_n) \colonequals h_{k_1}(g_1) \cdots h_{k_n}(g_n)$.
These are the $n$-variate Hermite polynomials, forming an orthonormal basis of the space of polynomials $\RR[g_1, \dots, g_n]$ under the inner product $\inner{f}{h} = \E[f(g_1,\ldots, g_n)h(g_1,\ldots, g_n)]$.

Under the same convention as before, we associate to every $g_i$ a multiplication operator acting on $\RR[g_1, \dots, g_n]$, and the action of $g_i$ can be decomposed into annihilation and creation parts. Formally, for all $h\in \RR[g_1, \dots, g_n]$, we have
\begin{align*}
    g_ih = (\ell_i+\ell_i^*)h\,,
\end{align*}
where $\ell_i$ and $\ell_i^*$ act on the Hermite basis by
\begin{align*}
    \ell_i h_{k_1,\ldots k_n} &= \sqrt{k_i+1}h_{k_1,\ldots, k_{i}+1,\ldots, k_n}\,,\\
    \ell_i^\ast h_{k_1,\ldots k_n} &= \begin{cases}
        \sqrt{k_i}h_{k_1,\ldots, k_{i}-1,\ldots, k_n} & \text{ if } k_i \geq 1, \\
        0& \text{ if } k_i=0.
    \end{cases}
\end{align*}
These operators satisfy the \emph{multivariate bosonic commutator relations}, given by
\begin{equation}\label{eq:canonical_cr_new}
    \ell_i^* \ell_j- \ell_j\ell_i^* = \delta_{i,j} \mathbf{1}\,.
\end{equation}
These relations, together with the vacuum state, recover joint moments of the Gaussian variables:
\begin{align}
    \E[g_{i_1}g_{i_2}\ldots g_{i_p}] = \tau\left(\prod_{j=1}^p(\ell_{i_j}+\ell_{i_j}^*)\right) = \innerL{h_{0}}{\prod_{j=1}^p(\ell_{i_j}+\ell_{i_j}^*)h_0}\label{eq: moments-multi}
\end{align}
where $h_0 = 1$ is the constant function. 
As before, this can be verified to recover Wick's formula for the left-hand side.
An equivalent operator of any Gaussian linear combination, $\sum_i a_ig_i$ for $a_i \in \R$, is therefore given by the operator $\sum_i a_i (\ell_i+\ell_i^*)$. 
In QFT, this is known as a \emph{Segal field operator}.\footnote{We omit the conventional factor of $1/\sqrt{2}$.}

To study a self-adjoint Gaussian random matrix, $X=\sum_{i=1}^ng_iA_i$, we define an equivalent operator $\sX$ in the space $M_d(\C)$ tensored with the space of linear operators over $\RR[g_1, \dots, g_n]$:
\[
    \sX \colonequals \sum_{i=1}^nA_i\otimes (\ell_i+\ell_i^*)
    = \sL + \sL^\ast,
\]
where $\sL = \sum_i A_i\otimes \ell_i$. As we will see below, the seemingly innocuous decomposition $\sX = \sL + \sL^\ast$ plays a central role in our arguments.

We note that these operators act on the space $\CC^d \otimes \RR[g_1, \dots, g_n]$, which may be viewed as $d$-dimensional vectors, each of whose entries is a polynomial in $g_1, \dots, g_n$.
By \eqref{eq: moments-multi}, $\sX$ then captures the trace moments of $X$ via
\begin{align}
    \E \tr X^p = \tr\otimes \tau(\sX^{p}) = \tr\otimes\tau((\sL + \sL^{\ast})^p)
    \label{eq:trace-moments}
\end{align}
where $\tau(\mathcal{A}) = \inner{h_0}{\mathcal{A} h_0}$ is the vacuum state and $\tr$ is the normalized trace, so that $\tr \otimes \tau$ is a state on $M_d(\C)$ tensored with the space of linear maps on $\RR[g_1, \dots, g_n]$.

\subsection{Proofs of NCK-type inequalities}

Our goal is to bound $\EE\|X\|$ by bounding the traces on the left-hand side.
One may show that, for a bounded operator $\mathcal{T}$, we have $\left|\tr \otimes \tau(\mathcal{T})\right| \leq \|\mathcal{T}\|$, so one might hope to apply this bound on the right-hand side.
While versions of such a strategy are sensible in some applications we discuss later, in the Gaussian setting operators like $\sX$ and $\sL$ are unbounded. 
Instead, we bound these and related operators in the \emph{Loewner order} by simpler operators, and then consider the restrictions of those operators to finite-dimensional subspaces of $\CC^d \otimes L^2(\gamma_n)$ spanned by low-degree polynomials, on which they are bounded.

A crucial role in the bounds in Loewner order will be played by what is referred to in QFT as the \emph{number operator},
\begin{equation}\label{eq:numberoperator}
    N = \sum_{i=1}^n \ell_i\ell_i^*.
\end{equation}
The number operator acts on Hermite polynomials by $Nh_{k_1,\ldots k_n} = (k_1+\ldots +k_n)h_{k_1,\ldots, k_n}$. 
In other words, it  scales each Hermite polynomial by its total degree.\footnote{In the QFT formulation over Fock spaces, the degree of a Hermite polynomial corresponds to a number of ``particles'', in which case the number operator counts the number of particles, justifying its name.}

We note that, since Hermite polynomials can have arbitrary degree, the number operator is unbounded, like $\mathcal{X}$ and $\mathcal{L}$.
But, the number operator is bounded on the space of polynomials of degree bounded by some $p$; indeed, it is clear from the above description that this restriction has norm at most $p$.
Thus the number operator is easy to use in the above restriction strategy.

Let us start by deriving bounds on some related operators depending on the number operator~\eqref{eq:numberoperator}.
The parameters of a Gaussian series model $X = \sum_{i = 1}^n g_i A_i$ given earlier in Theorem~\ref{thm:nck} and~\eqref{eq:defkappaX} will appear in our statement, which we recall are:
\begin{align*}
    \sigma(X)^2 = \left\|\sum_{i = 1}^n A_i^2\right\|\,, \qquad
    \kappa(X)^2 = \lambda_{\max}(C(X))\,, \qquad
    C(X) = \sum_{i, j = 1}^n (e_i e_j^{*}) \otimes (A_j A_i)\,.
\end{align*}
As we will see momentarily, the following are direct analogs of the NCK inequality and our improvement in Theorem~\ref{thm:strong-NCK} for operators in the Loewner ordering, and readily give these NCK-type inequalities for $\EE\|X\|$ after a short additional argument truncating these operators to spaces of polynomials of bounded degree, on which they are bounded operators.
The Loewner ordering is convenient here because it allows us to state related inequalities without explicit truncation.
We note also that these bounds may be viewed as variants of ones for scalar linear combinations from the QFT literature; see, e.g., \cite{Cook1953} and \cite[Lemma~2.1]{RodnianskiSchlein2009}.

\begin{theorem}[Loewner NCK-type inequalities]\label{thm:operator-NCK}
    Let $X=\sum_{i=1}^n g_iA_i$ be a self-adjoint Gaussian random matrix. For 
    $\sL = \sum_{i = 1}^n A_i\otimes \ell_i$ we have
    \begin{align*}
        \sL\sL^*&\preceq \sigma(X)^2\,I_d \otimes N\,,\\
        \sL^*\sL&\preceq \left(\sum_{i = 1}^n A_i^2\right)\otimes \mathbf{1}+\kappa(X)^2\,I_d\otimes N\,.
    \end{align*}
\end{theorem}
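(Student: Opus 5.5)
Both inequalities will be proved by expanding the products and testing against an arbitrary vector $\psi \in \CC^d \otimes \RR[g_1,\dots,g_n]$. We work on finitely supported polynomial vectors, where every operator involved is defined. We write $\psi_i \colonequals (I_d \otimes \ell_i^*)\psi$, so that $\langle \psi, (I_d\otimes N)\psi\rangle = \sum_i \|\psi_i\|^2$.

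\emph{First inequality.} We have $\sL^*\psi = \sum_i (A_i \otimes \mathbf 1)\psi_i$, since $(A_i\otimes \ell_i)^* = A_i \otimes \ell_i^*$ and $A_i\otimes \mathbf 1$ commutes with $I_d\otimes \ell_i^*$. Therefore
\[
\langle \psi, \sL\sL^*\psi\rangle = \Bigl\|\sum_i (A_i\otimes\mathbf 1)\psi_i\Bigr\|^2 = \|(R\otimes \mathbf 1)\Psi\|^2 \le \|R\|^2 \|\Psi\|^2 ,
\]
where $R = [A_1\,\cdots\,A_n]$ and $\Psi = (\psi_1,\dots,\psi_n) \in \CC^{dn}\otimes \RR[g]$. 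Since $\|R\|^2 = \sigma(X)^2$ and $\|\Psi\|^2 = \sum_i\|\psi_i\|^2$, this gives $\sL\sL^*\preceq\sigma(X)^2\, I_d\otimes N$.

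\emph{Second inequality.} Expanding gives
\[
\sL^*\sL = \sum_{i,j} A_iA_j \otimes \ell_i^*\ell_j .
\]
We apply the commutation relation \eqref{eq:canonical_cr_new}, $\ell_i^*\ell_j = \ell_j\ell_i^* + \delta_{ij}\mathbf 1$, to obtain
\[
\sL^*\sL = \Bigl(\sum_i A_i^2\Bigr)\otimes\mathbf 1 + \sum_{i,j} A_iA_j\otimes \ell_j\ell_i^* .
\]
It remains to bound the second term by $\kappa(X)^2\, I_d\otimes N$. Its quadratic form is
\[
\sum_{i,j}\bigl\langle \psi, (A_iA_j\otimes \ell_j\ell_i^*)\psi\bigr\rangle = \sum_{i,j}\bigl\langle (I\otimes \ell_j^*)\psi, (A_iA_j\otimes\mathbf 1)(I\otimes\ell_i^*)\psi\bigr\rangle = \sum_{i,j}\langle \psi_j, (A_iA_j\otimes \mathbf 1)\psi_i\rangle .
\]
This equals $\langle \Psi, (C(X)\otimes \mathbf 1)\Psi\rangle$, because the $(j,i)$ block of $C(X)$ is $A_iA_j$. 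Hence it is at most $\lambda_{\max}(C(X))\|\Psi\|^2 = \kappa(X)^2\langle\psi,(I_d\otimes N)\psi\rangle$. Note that only $\lambda_{\max}$ is needed here, not the norm, since $C(X)\otimes\mathbf 1 \preceq \lambda_{\max}(C(X))\,\mathbf 1$. This is exactly why $\kappa$ is defined through $\lambda_{\max}$.

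\emph{Main obstacle.} The only delicate step is the index bookkeeping: checking that the reordered term produces the partial transpose $C(X)$ rather than $R^*R$. This works because normal ordering places $\ell_i^*$ on the right and pairs it with the $A_i$ on the left. The remaining point is justifying manipulations with unbounded operators. This is handled by working on polynomials, a space that every operator involved preserves and on which all sums are finite. Alternatively, we can restrict to polynomials of bounded degree, as the paper does later.
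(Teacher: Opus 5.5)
Your proposal is correct and is essentially the paper's proof. The paper writes $\mathcal{L}\mathcal{L}^*$, and the remainder of $\mathcal{L}^*\mathcal{L}$ after applying the commutation relations, as $B^*(M\otimes\mathbf{1})B$ with $B$ the column $(I_d\otimes\ell_i^*)_{i=1}^n$ and $M=R^*R$, resp.\ $M=C(X)$; it then uses $M\preceq\lambda_{\max}(M)I$, which is exactly your quadratic-form computation with $\Psi=B\psi$, and your explicit restriction to finitely supported polynomial vectors makes precise a domain point the paper leaves implicit.
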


\begin{proof}
Define the row flattening of the matrix coefficients $R = [A_1 \, \cdots \, A_n]$. Note that $\|R^\ast R\|=\|RR^\ast\|=\sigma(X)^2$ and that we can write
\begin{equation*}
 \sL \sL^* 
  =  \begin{bmatrix}I_d \otimes \ell_1 & \cdots & I_d \otimes \ell_n\end{bmatrix} \left( R^\ast R \otimes \mathbf{1}\right) \begin{bmatrix} I_d \otimes \ell_1^* \\ \vdots \\ I_d \otimes \ell_n^*\end{bmatrix}  
\preceq \sigma(X)^2 \, \sum_{i = 1}^n I_d\otimes \ell_i \ell_i^*  = \sigma(X)^2 I_d \otimes N,
\end{equation*}
giving the first inequality. For the second inequality, we use the commutation relations~\eqref{eq:canonical_cr_new}:
\begin{align*}
\sL^*\sL &= \sum_{i,j = 1}^n A_iA_j\otimes \ell_i^*\ell_j \\
&= \sum_{i,j = 1}^n A_iA_j\otimes (\delta_{ij}\mathbf{1}+\ell_j\ell_i^*)\\
&= \sum_{i = 1}^n A_i^2 \otimes \mathbf{1} + \begin{bmatrix}I_d\otimes \ell_1 &  \cdots  & I_d \otimes \ell_n
\end{bmatrix} (C(X)\otimes \mathbf{1})\begin{bmatrix} I_d \otimes \ell_1^* \\ \vdots \\ I_d \otimes \ell_n^* \end{bmatrix}\,.
\end{align*}
Since the diagonal blocks of $C(X)$ are $A_i^2$, we have $\lambda_{\max}(C(X))\geq 0$ and so the second summand can be upper bounded in Loewner order by $\kappa(X)^2 I \otimes N$, completing the proof.
\end{proof}

\begin{remark}
    We emphasize the key point above that the two orders in which $\mathcal{L}$ and $\mathcal{L}^*$ can be multiplied admit very different bounds in Loewner order.
    It is precisely from working with the second choice $\mathcal{L}^*\mathcal{L}$ that we obtain a superior inequality, and it is precisely the use of the commutation relations in working with this product that we arrive at the partially-transposed matrix $C(X)$.
\end{remark}

We now proceed to bound the norm of the number operator when restricted to low degree polynomials.
Below we write $\Pi_p$ for the orthogonal projection in $\RR[g_1, \dots, g_n]$ to polynomials of degree at most $p$.

\begin{proposition}\label{prop:bosonicNbound}
    Let $N = \sum_i \ell_i\ell_i^\ast$ denote the bosonic number operator as defined in~\eqref{eq:numberoperator}.
    Then,
    \[ \Pi_p N \Pi_p \preceq p\Pi_p\,. \]
\end{proposition}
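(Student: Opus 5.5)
The plan is to diagonalize everything in the multivariate Hermite basis $\{h_{k_1,\ldots,k_n}\}$. First we verify that $N$ acts diagonally. Using the definitions of $\ell_i$ and $\ell_i^*$, we have $\ell_i\ell_i^* h_{k_1,\ldots,k_n} = k_i\, h_{k_1,\ldots,k_n}$; this holds also when $k_i=0$, since then both sides vanish. Summing over $i$ gives
\[ N h_{k_1,\ldots,k_n} = (k_1+\cdots+k_n)\,h_{k_1,\ldots,k_n}\,, \]
as already noted after~\eqref{eq:numberoperator}.

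Next we identify $\Pi_p$ in the same basis. The polynomials of degree at most $p$ are exactly the span of the $h_{k_1,\ldots,k_n}$ with $|k| \colonequals k_1+\cdots+k_n\le p$. Indeed, each $h_{k}$ has total degree $|k|$ with nonzero leading coefficient, so by a triangularity argument the $h_k$ with $|k|\le p$ span the polynomials of degree $\le p$. Since the $h_k$ form an orthonormal basis, $\Pi_p$ is the diagonal operator with entry $\mathbbm{1}[|k|\le p]$.

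With both operators diagonal in the same basis, $\Pi_p N \Pi_p$ is diagonal with entries $|k|\,\mathbbm{1}[|k|\le p] \le p\,\mathbbm{1}[|k|\le p]$, and these are exactly the entries of $p\Pi_p$. For any polynomial $f=\sum_k c_k h_k$ (a finite sum), this gives
\[ \langle f,(p\Pi_p-\Pi_pN\Pi_p)f\rangle=\sum_{|k|\le p}(p-|k|)|c_k|^2\ge 0\,. \]
The same computation holds on $\CC^d\otimes\RR[g_1,\ldots,g_n]$ if needed, by applying it to each coordinate.

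There is no real obstacle here. The only point requiring care is the identification of the range of $\Pi_p$ with the span of the Hermite polynomials of total degree at most $p$, which follows from the triangularity of the Hermite basis with respect to monomials.
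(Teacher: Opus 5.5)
Your proposal is correct and follows essentially the same route as the paper: both $N$ and $\Pi_p$ are diagonal in the orthonormal Hermite basis, and the eigenvalues $|k| \le p$ of $\Pi_p N \Pi_p$ on the degree-$\le p$ part are compared with the eigenvalue $p$ of $p\Pi_p$. The only addition is that you explicitly justify that the range of $\Pi_p$ is spanned by the $h_k$ with $|k|\le p$ via triangularity, which the paper takes for granted.
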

\begin{proof}
    Both $N$ and $\Pi_p$ are diagonalized by the orthonormal Hermite basis, and further $\Pi_pN\Pi_p$ and $\Pi_p$ have finite rank since the space of polynomials of degree at most $p$ has finite dimension. 
    All Hermite polynomials whose degree is larger than $p$ are in the kernel of both $\Pi_pN\Pi_p$ and $p\Pi_p$.
    The eigenvalues associated to Hermite polynomials whose degree is some $\Delta\leq p$ are $p$ for $p\Pi_p$ and $\Delta$ for $\Pi_pN\Pi_p$, and the result follows.
\end{proof}

We are now ready to prove the main inequalities of this section, the upper bounds of the original NCK inequality (Theorem~\ref{thm:nck}) and of our improved NCK inequality (Theorem~\ref{thm:strong-NCK}).
Both come from the following bounds on tracial moments.
\begin{corollary}\label{cor:nck-projection}
In the setting of Theorem~\ref{thm:operator-NCK} we have the following trace moment bounds for $p\geq 1$:
    \begin{align*}
        \left(\E\tr(X^{2p})\right)^{\frac1{2p}} &\leq 2\sqrt{p}\,\sigma(X)\,, \\
        \left(\E\tr(X^{2p})\right)^{\frac1{2p}}&\leq 2\sqrt{\sigma(X)^2 + (p-1)\kappa(X)^2}\,.
    \end{align*} 
\end{corollary}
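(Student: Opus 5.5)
The plan is to write the trace moment as a squared norm of a vector of bounded polynomial degree, and then peel off one application of $\sX$ at a time. Each step will be controlled only through $\sL^*\sL$, restricted via the projections $\Pi_k$.

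\textbf{Step 1: reduction to a vector norm.} By \eqref{eq:trace-moments} and self-adjointness of $\sX$,
\[
\EE\tr X^{2p} = \tr\otimes\tau(\sX^{2p}) = \frac1d\sum_{a=1}^d \bigl\|\sX^p(e_a\otimes h_0)\bigr\|^2 .
\]
So it suffices to show $\|\sX^p(e_a\otimes h_0)\|\le \prod_{k=1}^p 2\sqrt{\sigma(X)^2+(k-1)\kappa(X)^2}$ for every $a$. The first claimed bound will come from the same argument with the cruder constant $2\sigma(X)\sqrt{k}$.

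\textbf{Step 2: telescoping through degree layers.} Multiplication by $g_i$ raises degree by at most one, so $\sX$ maps $\CC^d\otimes\mathrm{ran}\,\Pi_{k-1}$ into $\CC^d\otimes\mathrm{ran}\,\Pi_k$. Writing $\Pi_k$ for $I_d\otimes\Pi_k$, this gives
\[
\sX^p(e_a\otimes h_0)=(\Pi_p\sX\Pi_{p-1})(\Pi_{p-1}\sX\Pi_{p-2})\cdots(\Pi_1\sX\Pi_0)(e_a\otimes h_0),
\]
and hence $\|\sX^p(e_a\otimes h_0)\|\le\prod_{k=1}^p\|\sX\Pi_{k-1}\|$. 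We bound each factor by $\|\sX\Pi_{k-1}\|\le\|\sL\Pi_{k-1}\|+\|\sL^*\Pi_{k-1}\|$.

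For the first term, Theorem~\ref{thm:operator-NCK} and Proposition~\ref{prop:bosonicNbound} give
\[
\|\sL\Pi_{k-1}\|^2=\|\Pi_{k-1}\sL^*\sL\Pi_{k-1}\|\le\sigma(X)^2+(k-1)\kappa(X)^2 .
\]
We do not use $\sL\sL^*\preceq\sigma^2 N$ for the annihilation part, since it would give a factor $\sigma\sqrt{k-1}$, which is too large for the second bound.

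Instead, $\sL^*$ lowers degree by one, so $\sL^*\Pi_{k-1}=\Pi_{k-2}\sL^*\Pi_{k-1}$, which is the adjoint of $\Pi_{k-1}\sL\Pi_{k-2}$. Therefore
\[
\|\sL^*\Pi_{k-1}\|\le\|\sL\Pi_{k-2}\|\le\sqrt{\sigma(X)^2+(k-2)\kappa(X)^2}
\]
for $k\ge2$, and the term vanishes for $k=1$. Thus each factor is at most $2\sqrt{\sigma^2+(k-1)\kappa^2}\le2\sqrt{\sigma^2+(p-1)\kappa^2}$. Taking $2p$-th roots after averaging over $a$ gives the second inequality.

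\textbf{Step 3: the first inequality.} Since $\kappa(X)\le\sigma(X)$ by Proposition~\ref{prop:comparisonKappa}, the bound just obtained is at most $2\sqrt{\sigma^2+(p-1)\sigma^2}=2\sqrt p\,\sigma$. Alternatively, one can use $\|\sL^*\Pi_{k-1}\|\le\sigma\sqrt{k-1}$ together with $\|\sL\Pi_{k-1}\|\le\sigma\sqrt k$.

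\textbf{Main obstacle.} The only delicate point is Step 2's handling of the annihilation part. The natural bound coming from $\sL\sL^*$ involves $\sigma$ rather than $\kappa$, so one must transfer it to a creation bound one layer lower via adjointness and the degree-lowering property. One must also check the harmless fact that $\lambda_{\max}(C(X))\ge0$, so that Theorem~\ref{thm:operator-NCK} applies on the truncated spaces.
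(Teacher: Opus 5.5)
Your proof is correct and is essentially the paper's argument. The paper truncates once to $\sX_p=(I_d\otimes\Pi_p)\sX(I_d\otimes\Pi_p)=\sL_p+\sL_p^*$, uses $\sX^p(e_j\otimes h_0)=\sX_p^p(e_j\otimes h_0)$, and bounds $\|\sX_p\|\le 2\|\sL_p\|$ with $\sL_p^*\sL_p=(I_d\otimes\Pi_{p-1})\sL^*\sL(I_d\otimes\Pi_{p-1})$. This is your layer-by-layer telescoping collapsed into one step, and your ``delicate'' adjointness step is just $\|\sL_p^*\|=\|\sL_p\|$ there. You also deduce the first bound from the second via $\kappa(X)\le\sigma(X)$, whereas the paper gets it directly from $\sL_p\sL_p^*\preceq p\,\sigma(X)^2\, I\otimes\Pi_p$; both are fine.
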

\begin{proof}
We define the truncated operators
\begin{align*}
    \sL_p &\colonequals (I_d \otimes \Pi_p)\sL(I_d \otimes \Pi_p)\,, \\
    \sX_p &\colonequals (I_d\otimes \Pi_p)\sX(I_d\otimes \Pi_p) = \sL_p + \sL_p^\ast\,.
\end{align*}
Note that since $\sL^*$ lowers the degree we have $\sL^*(I_d\otimes \Pi_p)=(I_d\otimes \Pi_p)\sL^*(I_d\otimes \Pi_p)=\sL_p^*$. Similarly, since $\sL$ raises the degree we have $\sL_p = (I_d\otimes \Pi_p)\sL(I_d \otimes \Pi_p) = \sL(I_d \otimes \Pi_{p-1})$.

Using Theorem~\ref{thm:operator-NCK} and Proposition~\ref{prop:bosonicNbound}, we obtain
\begin{align*}
     \sL_p\sL_p^*
     &= (I\otimes \Pi_p) \sL \sL^* (I\otimes \Pi_p) \\
     &\preceq \sigma(X)^2 I\otimes \Pi_pN\Pi_p \preceq p\,\sigma(X)^2 I\otimes \Pi_p\,, \\
     \sL_p^* \sL_p 
     &= (I\otimes \Pi_{p-1}) \sL^* \sL (I\otimes \Pi_{p-1}) \\ &\preceq \sum_i A_i^2 \otimes \Pi_{p-1} + \kappa(X)^2 I\otimes \Pi_{p-1}N\Pi_{p-1}\\ &\preceq \left(\sigma(X)^2 +(p-1)\kappa(X)^2\right)(I\otimes \Pi_{p-1})\,.
\end{align*}
Since projections have norm at most 1, we have, by triangle inequality,
\begin{equation}\label{eq:boundonXpBoson}
    \norm{\sX_p} \leq 2\norm{\sL_p} \leq 2\min \left\{ \sqrt{p}\,\sigma(X), \sqrt{\sigma(X)^2 +(p-1)\kappa(X)^2}\right\}\,.
\end{equation}

Lastly, we relate $\EE \tr X^{2p}$ with $\|\sX_p\|$.     Since each multiplication by $\sX$ raises the degree of a polynomial by at most $1$ (and $h_0$ has degree $0$), we have that
    $\sX^p(e_j\otimes h_0) = \sX_p^{p}(e_j\otimes h_0)$ for all $j\in [d]$. 
    Thus, by \eqref{eq:trace-moments}, we have
\begin{align*}
\EE\tr X^{2p}=\tr\otimes \tau(\sX^{2p}) = \frac1d\sum_{j=1}^d \|\sX_p^p(e_j\otimes h_0)\|^2 \leq \norm{\sX_p}^{2p}\,,
\end{align*}
which together with~\eqref{eq:boundonXpBoson} establishes the trace inequalities. 
\end{proof}

Using this, the proofs of the two NCK-type inequalities are immediate.
\begin{corollary}[Concrete upper bound of Theorem~\ref{thm:nck}]
    For $X$ as in \eqref{eq:gauss-series}, we have
    \[ \E\norm{X} \leq 4\sqrt{1+\log d}\,\sigma(X)\,. \]
\end{corollary}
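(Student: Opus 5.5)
The plan is to convert the trace-moment bound of Corollary~\ref{cor:nck-projection} into a bound on $\EE\|X\|$ by the usual moment method. For any integer $p \geq 1$, the spectral norm of the self-adjoint matrix $X$ satisfies $\|X\|^{2p} \leq \Tr(X^{2p}) = d\,\tr(X^{2p})$, since $X^{2p} \succeq 0$ and its largest eigenvalue is at most its trace. Applying Jensen's (or Lyapunov's) inequality gives
\[
\EE\|X\| \leq \left(\EE\|X\|^{2p}\right)^{1/(2p)} \leq d^{1/(2p)}\left(\EE\tr X^{2p}\right)^{1/(2p)} \leq 2\sqrt{p}\,d^{1/(2p)}\,\sigma(X)\,,
\]
using the first inequality of Corollary~\ref{cor:nck-projection}.

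It remains to choose $p$ to balance $\sqrt{p}$ against $d^{1/(2p)} = e^{\log d/(2p)}$. I would take $p = \lceil \log d\rceil$ when $d \geq 2$, or more simply $p = \lceil \log d \rceil$ with the convention $p \geq 1$. Then $p \geq \log d$ gives $d^{1/(2p)} \leq e^{1/2}$, and $p \leq 1 + \log d$ gives $\sqrt{p} \leq \sqrt{1+\log d}$. Together these yield
\[
\EE\|X\| \leq 2e^{1/2}\sqrt{1+\log d}\,\sigma(X) \leq 4\sqrt{1+\log d}\,\sigma(X)\,,
\]
since $2\sqrt{e} \approx 3.30 < 4$.

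The only point needing care is the edge case $d=1$. There $\log d = 0$, so I would take $p=1$; then $d^{1/(2p)} = 1$ and the bound reads $\EE\|X\| \leq 2\sigma(X)$, which is consistent with the claim. There is no real obstacle: all the analytic content is already in Corollary~\ref{cor:nck-projection}, and the remaining work is the elementary choice of $p$ and the verification of the constant.
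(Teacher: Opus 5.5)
Your proposal is correct and follows the paper's proof essentially verbatim: Jensen's inequality with $\|X\|^{2p}\le d\,\tr X^{2p}$, then the first bound of Corollary~\ref{cor:nck-projection}, then the choice $p=\lceil\log d\rceil$. The only differences are cosmetic: the paper uses $d^{1/2p}\le\sqrt{e}\le 2$ where you use $2\sqrt{e}<4$, and you separately handle $d=1$ (where $\lceil \log d\rceil=0$ is not an admissible $p$), a point the paper glosses over.
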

\begin{proof}
    By Jensen's inequality,
\begin{equation}
\EE\|X\| \leq \left(\EE\|X\|^{2p}\right)^{\frac1{2p}} \leq \left(d\, \EE\tr X^{2p}\right)^{\frac1{2p}} = d^{\frac1{2p}} \left(\EE\tr X^{2p}\right)^{\frac1{2p}}\,. \label{eq:norm-trace}
\end{equation}
    The result then follows by choosing $p=\lceil{\log d\rceil}$, so that $d^{1 / 2p} \leq \sqrt{e} \leq 2$. 
\end{proof}
\noindent
Our improvement follows by a slight variation of the same strategy.
\begin{proof}[Proof of Theorem~\ref{thm:strong-NCK}]
    We again start with the bound of \eqref{eq:norm-trace}, which may be written as
    \[ \EE\|X\| \leq \inf_{p \in \NN} 2d^{1/2p}\sqrt{\sigma(X)^2 + p \kappa(X)^2}\,. \]
    The result then follows by the result of Lemma~\ref{lem:optimization-scalar} on this optimization and simplifying using that $\log(2d) \leq 2\log d$.
\end{proof}

In Section~\ref{sec:xfreeBound}, we will prove a strengthening of the estimate of Theorem~\ref{thm:strong-NCK} replacing the leading term $2\sigma(X)$ by $\|\Xfree\|$, at the cost of a different error term depending on $v(X)$ instead of $\kappa(X)$.
We will do this by introducing a variational component to the argument, akin to the proof of Lehner's formula~\cite{Leh99}.

\begin{remark}[Scalar Khintchine inequalities]
    We note that essentially the same argument, taking $d = 1$ or equivalently considering the Segal field operator $\sum_{i = 1}^n a_i(\ell_i + \ell_i^*)$ for scalar coefficients $a_i \in \RR$, gives the classical Khintchine inequalities for linear combinations of i.i.d.\ random variables, such as Rademacher variables~\cite{Khintchine1923}.
    The only difference is that, in this case, one must work with a number operator associated with the orthogonal polynomials of this distribution rather than the Hermite polynomials.
    Thus the number operator captures all of the salient properties of the law of the creation and annihilation operators for such bounds, just as in our argument above.
\end{remark}

\section{Preliminaries on Fock spaces and commutation relations}
\label{sec:fock}

In this section, we introduce a more general and abstract framework for similar calculations to the ones in the previous section that will generalize the algebraic properties of orthogonal polynomials (including Hermite polynomials).
We use the framework of non-commutative probability; many of the relevant definitions also appear prominently in applications to QFT.

A \emph{non-commutative probability space} is a pair $(\sA,\tau)$, where $\sA$ is a unital $\ast$-algebra (interpreted as an algebra of random variables but often constructed as an operator algebra), and $\tau: \sA \rightarrow \CC$ is a linear functional called a \emph{state} (interpreted as a trace or expectation) satisfying $\tau(\mathbf 1) = 1$ and $\tau(a^\ast a)\ge 0$.
The \emph{law} of a self-adjoint element $x \in \sA$ refers to the collection of \emph{moments} $\tau(x^k)$.
We say that the law is a certain real-valued probability measure $\mu$ if these moments are the moments of $\mu$:
\[ \tau(x^k) = \int \lambda^k\,\mathrm d\mu(\lambda) \text{ for all } k \geq 1. \]

The creation and annihilation operators $\ell_i$ and $\ell_i^*$ discussed above can be viewed, in addition to their concrete realization over Hermite polynomials, as elements of a suitable non-commutative probability space.
The bosonic commutation relations that these satisfy correspond to classical independence of scalar Gaussian random variables.
Other forms of non-commutative independence, like free independence, can be realized by creation and annihilation operators satisfying different commutation relations (indeed, this is how one defines the operator $\Xfree$ mentioned above).
In this section, we give constructions of these operators on Hilbert spaces, which provide another way to interpret our previous use of $\RR[g_1, \dots, g_n]$ using the Hermite polynomial basis.

\subsection{The full Fock space}
\label{sec:fock-ind}

We begin by reviewing the construction of freely independent semicircular random variables that appeared for example as the $s_i$ in $\Xfree$ in~\eqref{eq:Xfree}, following~\cite[Lecture~7]{NS06}. 
The \emph{full Fock space} over $\CC^n$ is defined as the space $\mathcal F(\CC^n) \defeq \bigoplus_{k\geq0}(\CC^n)^{\otimes k}$, equipped with the standard Hilbert space inner product, where we take by convention $(\CC^n)^{\otimes 0} = \CC \Omega$ for $\Omega$ a distinguished unit vector called the \emph{vacuum} vector.
Algebras of operators on $\mathcal{F}(\CC^n)$ are given the structure of a non-commutative probability space by endowing them with the \emph{vacuum state}, the functional
\begin{equation}
\label{eq:law}
\tau(x) \colonequals \langle\Omega,x\Omega\rangle.
\end{equation}

We call each $(\CC^n)^{\otimes k}$ the \emph{$k$-particle space} and the span of $\Omega$ the \emph{0-particle space}, this terminology coming from the following interpretation.
Let $e_1,\ldots,e_n$ be an orthonormal basis of $\CC^n$. 
The Fock space has a natural orthonormal basis $\{\Omega\} \cup \{e_{i_1}\otimes \cdots \otimes e_{i_k}:i_1,\ldots, i_k\in [n], k\in \NN\}$.
We can view this basis as the set of all words over the alphabet $\{e_1,\ldots, e_n\}$, where $\Omega$ represents the empty word.
The creation operator $\ell_i$ adds a letter to the left of a word:
\begin{equation}
    \ell_i\Omega = e_i,
    \qquad
    \ell_i(e_{i_1}\otimes\cdots\otimes e_{i_k})
    =e_i\otimes e_{i_1}\otimes\cdots\otimes e_{i_k}.\label{eq:creation}
\end{equation}
Its adjoint $\ell_i^*$ removes the first letter if it is $e_i$ and otherwise annihilates the word:
\begin{equation}
    \ell_i^*\Omega=0,
    \qquad
    \ell_i^*(e_{i_1}\otimes\cdots\otimes e_{i_k})
    =\langle e_i,e_{i_1}\rangle\, e_{i_2}\otimes\cdots\otimes e_{i_k}.\label{eq:anni}
\end{equation}
Then the operators $\ell_i$ and $\ell_j^*$ satisfy the \emph{free commutation relations}:\footnote{Also known as the \emph{Cuntz-Toeplitz relations}, defined to study $C^*$-algebras generated by isometries~\cite{Cuntz1977}.}
\begin{equation}\label{eq:free_cr_new}
    \ell_i^*\ell_j= \delta_{ij}\mathbf{1}.
\end{equation}
With respect to the vacuum state, the operators
\begin{equation*}
    s_i=\ell_i+\ell_i^*,\qquad i=1,\ldots,n,
\end{equation*}
form a family of standard freely independent semicircular random variables~\cite[Corollary~7.17]{NS06}.
Each $s_i$ has the semicircle law normalized to have variance 1 (in the sense of \eqref{eq:law} above).
Freeness means that a product of centered polynomials has state zero whenever adjacent factors involve different variables.
  Precisely, for every $m\geq1$, $\tau\bigl(p_1(s_{i_1})\cdots p_m(s_{i_m})\bigr)=0$
  whenever $p_1,\ldots,p_m\in\CC[x]$ and $i_1,\ldots,i_m\in[n]$ satisfy
  $\tau(p_j(s_{i_j}))=0$ for all $j$ and $i_j\neq i_{j+1}$ for $1\leq j<m$.\footnote{Voiculescu made this definition to study free products of operator algebras~\cite{voiculescu1985symmetries}, using the full Fock space construction. The connection with random matrices came later via the related notion of asymptotic freeness~\cite{Voi91}.}
The vacuum state satisfies the following properties for all $a, b$ in the $C^*$-algebra generated by the $s_i$~\cite[Proposition~7.18]{NS06}:
\begin{enumerate}
    \item $\tau(ab) = \tau(ba)$.
    \item $\tau(a^*a)\geq 0$, with $\tau(a^*a) = 0$ if and only if $a=0$.
\end{enumerate}

\subsection{General commutation relations}

So far, we have seen two examples of
commutation relations that operators
$\ell_1, \ldots, \ell_n$ may satisfy: 
the bosonic commutation relations~\eqref{eq:canonical_cr_new} and
the free commutation relations~\eqref{eq:free_cr_new}.
The following definition generalizes both of these.

\begin{definition}
    \label{def:gamma-CCR}
    Given any undirected weighted graph $\Gamma$ on vertex set $[n]$ with edge weights $\Gamma_{i,j}\in [-1,1]$,
    the $\Gamma$-\emph{canonical commutation relations} $(\Gamma$-CCR)
    are
    \begin{align}
        \ell_i^* \ell_j-\Gamma_{i,j}\ell_j\ell_i^* = \delta_{i,j}\mathbf{1}.\label{eq:ccr}
    \end{align}
    We allow this graph to have loops, so the diagonal weights $\Gamma_{i,i}$ are also defined.
\end{definition}
\noindent The bosonic
and free relations correspond to the
cases $\Gamma\equiv 1$ and $\Gamma\equiv 0$, respectively.
Bożejko and Speicher~\cite{proofGammaRepresentation}
show how to construct operators satisfying the
$\Gamma$-CCR using a different inner
product on the full Fock space $\mathcal F(\CC^n)$. 

\begin{definition}[$\Gamma$-Fock space~\cite{proofGammaRepresentation}]\label{def:gammaFock}
    Define $\langle \cdot,\cdot\rangle_{\Gamma}$ on $\mathcal F(\C^n)$ as follows:
    \begin{enumerate}
        \item Set $\langle \Omega,\Omega\rangle_\Gamma=1$.
        \item For any $\xi\in (\CC^n)^{\otimes k},\xi'\in (\CC^n)^{\otimes k'}$ for $k\neq k'$, set $\langle \xi, \xi'\rangle_\Gamma=0$.
        \item Otherwise,
    for every $\xi \in (\CC^n)^{\otimes {k-1}}$ and $i,j_1, \ldots, j_k\in [n]$, define recursively
    \[
        \langle e_i \otimes \xi ,e_{j_1}\otimes \cdots \otimes e_{j_k} \rangle_\Gamma = \sum_{\substack{r \in [k] \\ j_r=i}}  \langle \xi, e_{j_1}\otimes \cdots \otimes e_{j_{r-1}} \otimes e_{j_{r+1}} \otimes \cdots \otimes e_{j_k}\rangle_{\Gamma} \prod_{s=1}^{r-1} \Gamma_{i,j_s}\,.
    \]
    \end{enumerate}
    After quotienting $\mathcal F(\C^n)$ by the null space of $\langle\cdot,\cdot\rangle_\Gamma$ and taking the completion, we obtain a Hilbert space
    $\mathcal F_\Gamma(\CC^n)$ known as the {\em $\Gamma$-Fock space}.
    On the $\Gamma$-Fock space, the 
    creation operator $\ell_i$ from~\eqref{eq:creation}
    and its corresponding adjoint operator $\ell_i^*$ for $\langle \cdot,\cdot\rangle_{\Gamma}$ 
    satisfy the
    $\Gamma$-CCR (note that $\ell_i^\ast$ is different from the operator in \eqref{eq:anni}). We endow them
    with the vacuum state $\tau(x) = \langle \Omega, x\Omega\rangle$.
\end{definition}

\noindent To interpret this definition, we specialize it to several important special cases.

\begin{example}[Freely independent semicirculars]
    When $\Gamma\equiv 0$, the inner
    product $\langle \cdot, \cdot \rangle_\Gamma$ coincides with the standard inner product on the full
    Fock space,
    so we recover the construction of
    freely independent semicircular
    random variables from Section~\ref{sec:fock-ind}.
\end{example}

\begin{example}[Classically independent Gaussians]
    When $\Gamma\equiv 1$, the $k$-particle space is obtained from $(\CC^n)^{\otimes k}$ by identifying tensors that differ only by a permutation of the indices $\{1,\ldots,k\}$. This in turn
    is equivalent to the construction in
    Section~\ref{sec:equivalentGaussian}
    by associating (up to a normalization) the Hermite polynomial
    $h_{k_1, \ldots, k_n}$ with the 
    symmetric Fock space basis
    vector that contains $k_i$ copies of $e_i$. The operators
    $(\ell_1+\ell_1^\ast,\ldots,\ell_n+\ell_n^\ast)$ therefore have the joint law with respect to the vacuum state of classically independent standard Gaussian random variables.
\end{example}

\begin{example}[$q$-Gaussian system]\label{ex:qGaussian}
    When $\Gamma\equiv q$ for some
    $q\in [-1,1]$, the operators
    $(\ell_1+\ell_1^\ast,\ldots,\ell_n+\ell_n^\ast)$ form a
    {\em $q$-Gaussian system}, an interpolation between the above examples studied, for instance, by the works~\cite{frischBourret,brownianBozejko,qGaussianSupport,speicherCLT,qGaussianReloaded,BKNPW2026-Paper1}.
    Marginally, each $\ell_i+\ell_i^\ast$ has the $q$-Gaussian distribution, which we describe in Section~\ref{sec:deformedNCK}.
\end{example}

\begin{example}[$\Gamma$-independent semicircular family]\label{ex:traceMonoid}
    Suppose $\Gamma$ is a simple unweighted
    graph, i.e., $\Gamma_{i,j}\in \{0,1\}$ for all $i\neq j$, and $\Gamma_{i,i}=0$. We can interpret each original Fock basis vector as a word over the characters $[n]$. Quotienting the null space of $\inner{\cdot}{\cdot}_\Gamma$ induces an equivalence relation between basis vectors, where two words are identified whenever one
    can be obtained from the other by
    repeatedly swapping adjacent letters
    that are adjacent in $\Gamma$. Indeed, pairs of words in the same equivalence class have unit inner product, while pairs in different classes have $0$ inner product. The resulting equivalence classes of words form the {\em trace monoid} associated with $\Gamma$~\cite{youssefTuran}. 
    
    The operators $(\ell_1+\ell_1^\ast,\ldots,\ell_n+\ell_n^\ast)$ have semicircular marginals, and their mixed moments satisfy a generalized notion of independence known as {\em $\Gamma$-independence}~\cite{lambdaFreeness,gammaIndependenceDefinition,charlesworthCollins,CGVvH24}. We refer 
    to~\cite[Definitions 1.2 and 1.3]{BKNPW2026-Paper1} for a more detailed treatment of this example.
\end{example}

Together with the property $\ell_i^* \Omega=0$, the $\Gamma$-CCR uniquely define the joint law of the non-commutative random variables $\ell_1+\ell_1^\ast,\ldots,\ell_n+\ell_n^\ast$ with respect to the vacuum state. The diagonal entries $\Gamma_{ii}$ determine the marginal laws of each $\ell_i+\ell_i^*$, while the off-diagonal entries specify their mixed moments. In particular, $\ell_i+\ell_i^\ast$ is bounded whenever $\Gamma_{ii}<1$.

Our goal in this section and the next is to develop tools for bounding the norms of matrix pencils, i.e., operators of the form
\[ \mathcal{X} = \sum_{i = 1}^n A_i \otimes (\ell_i + \ell_i^*), \]
for the creation operators $\ell_1, \ldots, \ell_n$ on the $\Gamma$-Fock space, both for general $\Gamma$ and for the special cases in the examples above.

\subsection{General number operator}

We next generalize the concept of number
operator from Section~\ref{sec:gaussian} to the $\Gamma$-Fock space.

\begin{definition}
    \label{def:Gamma-N}
    Given $\Gamma\in [-1,1]_\sym^{n\times n}$, define the {\em $\Gamma$-number
    operator} to be
    \[
        N_\Gamma \defeq \sum_{i=1}^n \ell_i \ell_i^\ast\succeq 0\,,
    \]
    where $\ell_1, \ldots, \ell_n$ are the
    creation operators on the $\Gamma$-Fock space from Definition~\ref{def:gammaFock}. 
    For
    an integer $p\ge 0$, denote by $N_\Gamma|_p$ the restriction of $N_\Gamma$ to the direct sum of the $k$-particle
    spaces for $0\le k\le p$.
\end{definition}

For example, in the bosonic setting ($\Gamma\equiv 1$),
the number operator scales each basis vector
by its particle number. In the free
setting ($\Gamma\equiv 0$), the
number operator
is the projection onto $\Omega^\perp$.

Similarly to Section~\ref{sec:gaussian},
bounds on the norms of matrix pencils will require a bound on the norm of the associated number operator. We state two simple bounds that will be used below.

\begin{lemma}[Number of particles bound]\label{lem:boundParticle}
    For any $\Gamma\in [-1,1]_\sym^{n\times n}$ and $p\ge 0$, $\|N_\Gamma|_p\|\le p$.
\end{lemma}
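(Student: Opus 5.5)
The plan is to reduce the claim to a bound on each fixed-particle space and then identify $N_\Gamma$ there with an explicit combinatorial operator.

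\textbf{Reduction.} Each $\ell_i$ maps the $k$-particle space to the $(k+1)$-particle space, and its $\Gamma$-adjoint $\ell_i^*$ maps $k$ to $k-1$. Hence $N_\Gamma=\sum_i \ell_i\ell_i^*$ preserves every $k$-particle space, and these spaces are mutually orthogonal for $\langle\cdot,\cdot\rangle_\Gamma$. So $N_\Gamma|_p$ is block diagonal, and since $N_\Gamma\succeq 0$ it suffices to prove, for each $0\le k\le p$ and each $\xi$ in the (algebraic) $k$-particle space,
\[
\sum_{i=1}^n \langle \ell_i^*\xi,\ell_i^*\xi\rangle_\Gamma\le k\,\langle\xi,\xi\rangle_\Gamma .
\]
The case $k=0$ is trivial because $\ell_i^*\Omega=0$.

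\textbf{Explicit form of $\ell_i^*$ and $N_\Gamma$.} Reading off the recursion in Definition~\ref{def:gammaFock}, on basis words
\[
\ell_i^*(e_{j_1}\otimes\cdots\otimes e_{j_k})=\sum_{r:\,j_r=i}\Bigl(\prod_{s<r}\Gamma_{i,j_s}\Bigr)\, e_{j_1}\otimes\cdots\widehat{e_{j_r}}\cdots\otimes e_{j_k}.
\]
This is exactly the defining relation $\langle e_i\otimes\xi',\eta\rangle_\Gamma=\langle\xi',\ell_i^*\eta\rangle_\Gamma$. Summing $\ell_i\ell_i^*$ over $i$ then gives $N_\Gamma|_k=R_k=\sum_{r=1}^k T_r$. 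Here $T_r$ moves the $r$-th letter of a word to the front and multiplies by the weights $\prod_{s<r}\Gamma_{j_r,j_s}$ of the letters it passes. This is the Bożejko--Speicher operator, and $T_1=\mathbf 1$.

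\textbf{Main step: each $T_r$ is a contraction.} By the triangle inequality, $\|R_k\|_\Gamma\le k$ follows once $\|T_r\xi\|_\Gamma\le\|\xi\|_\Gamma$ for each $r$. I expect this to be the main obstacle, because the $\Gamma$-norm is itself defined through $R$-type operators. My first attempt is induction on $k$ using the factorization $T_r=t_1t_2\cdots t_{r-1}$. Here $t_s$ is the weighted adjacent transposition of positions $s,s+1$, multiplying by $\Gamma_{j_s,j_{s+1}}$ for the swapped pair. For $s\ge 2$, $t_s$ acts as $\mathbf 1\otimes t_{s-1}$ on the last $k-1$ letters, so it should be handled by the induction hypothesis on $(k-1)$-particle spaces. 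It remains to show that $t_1$ is a $\Gamma$-contraction, using $|\Gamma_{ij}|\le1$ and the symmetry $\Gamma_{ij}=\Gamma_{ji}$. For this I would apply the recursive definition of the inner product twice (peel off two letters) and compare the resulting expressions.

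\textbf{Fallback.} If the contraction argument gets tangled, I would instead work with the quadratic form. Use the recursion $R_k=\mathbf 1+t_1(\mathbf 1\otimes R_{k-1})$ and the positivity $\langle\xi,R_k\xi\rangle_\Gamma=\sum_i\|\ell_i^*\xi\|_\Gamma^2\ge0$, and bound $\langle\xi,R_k\xi\rangle_\Gamma\le\|\xi\|_\Gamma^2+\|\xi\|_\Gamma\,\|(\mathbf 1\otimes R_{k-1})\xi\|_\Gamma$. Apply the induction hypothesis $\|R_{k-1}\|\le k-1$ to the tail, again needing $\|t_1\|\le 1$. This gives $\|N_\Gamma|_k\|\le k$, and hence $\|N_\Gamma|_p\|\le p$.
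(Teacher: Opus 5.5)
\textbf{Gap: the individual $T_r$ are not contractions.} Your reduction to fixed particle spaces and your word formula $N_\Gamma|_k=\sum_{r=1}^k T_r$ are correct; this is exactly the computation the paper uses. The main step, however, is false. The operators $T_r$ and the weighted swaps $t_s$ are defined on the algebraic tensor space. In general they do not even preserve the null space of $\langle\cdot,\cdot\rangle_\Gamma$, so there is no operator on $\mathcal F_\Gamma(\CC^n)$ for them to contract.

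Concretely, write words for basis tensors, e.g.\ $abc=e_a\otimes e_b\otimes e_c$. Let $\Gamma$ be the path on $a,b,c$ with edges $\{a,b\},\{a,c\}$: $\Gamma_{ab}=\Gamma_{ac}=1$ and all other entries $0$. The recursion gives $\langle abc,abc\rangle_\Gamma=\langle bca,bca\rangle_\Gamma=\langle abc,bca\rangle_\Gamma=1$, so $\xi=abc-bca$ has $\|\xi\|_\Gamma=0$. But $T_2\xi=t_1\xi=\Gamma_{ba}\,bac-\Gamma_{cb}\,cba=bac$, which has $\|bac\|_\Gamma=1$. Only the full sum descends to the quotient: $N_\Gamma\xi=bac-bca$ is null.

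This is not merely a degeneracy issue. Take $\Gamma\equiv q$, $n=2$, and $\eta=aab-2\,aba+baa$. One computes $\|\eta\|_q^2=6(1-q)^2(1+q)$ and $\|t_1\eta\|_q^2=3q^2(2+q)(1-q^2)$. Their ratio $q^2(2+q)/(2(1-q))$ equals $16/9$ at $q=2/3$ and blows up as $q\to1$.

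So the triangle-inequality route through $\|T_r\|\le 1$ cannot work. Your fallback needs the same false bound $\|t_1\|\le1$. It also has a second unjustified step, shared with ``$t_s=\mathbf 1\otimes t_{s-1}$ is handled by induction'': applying the induction hypothesis to $\mathbf 1\otimes R_{k-1}$. The $\Gamma$-inner product on $k$ particles is not the tensor product of the free inner product on the first letter with the $\Gamma$-inner product on the last $k-1$ letters. Hence bounds on the $(k-1)$-particle space do not transfer. In the path example above, $\mathbf 1\otimes R_2$ also sends the null vector $\xi$ to a vector of norm $1$.

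\textbf{Missing idea: use the word formula only through pairings.} This is what the paper does. The $k$-particle space of $\mathcal F_\Gamma(\CC^n)$ is finite-dimensional and $N_\Gamma|_k$ is self-adjoint. Take an eigenvector $x\neq 0$ with eigenvalue $\lambda\in\R$. Choose a word $y=\ell_{j_1}\cdots\ell_{j_k}\Omega$ maximizing $|\langle x,y\rangle_\Gamma|$; this maximum is positive because words span. Your formula gives $N_\Gamma y=\sum_{r=1}^k c_r y_r$ with words $y_r$ and $|c_r|\le 1$. Hence
\[
|\lambda|\,|\langle x,y\rangle_\Gamma|=|\langle x,N_\Gamma y\rangle_\Gamma|\le\sum_{r=1}^k|\langle x,y_r\rangle_\Gamma|\le k\,|\langle x,y\rangle_\Gamma|,
\]
so $|\lambda|\le k\le p$. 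This sup-norm, Gershgorin-type argument on the coefficient function $w\mapsto\langle x,w\rangle_\Gamma$ never needs any piece of $N_\Gamma$ to act on the quotient. It is therefore insensitive to the linear dependencies among words that defeat your operator-by-operator approach.
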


\begin{proof}
    Since $N_\Gamma$ preserves each particle space,
    it suffices to consider an eigenvector $x\neq 0$
    in the $k$-particle space for some $k\le p$, with
    eigenvalue $\lambda$.
    Choose $y = \ell_{j_1}\cdots \ell_{j_k} \Omega$ maximizing
    $|\langle x,y\rangle_\Gamma|$ over all
    $j_1, \ldots, j_k\in [n]$ (the maximum is positive, as such vectors span the
    $k$-particle space). The
    effect of $N_\Gamma|_p$
    on $y$ is
    \begin{align*}
        N_\Gamma|_p y &= \sum_{i=1}^n \sum_{r:j_r=i} \left(\prod_{s<r} \Gamma_{i,j_s}\right) \ell_i \ell_{j_1}\cdots \ell_{j_{r-1}} \ell_{j_{r+1}} \cdots \ell_{j_k} \Omega \\
        &= \sum_{r=1}^k \left(\prod_{s<r} \Gamma_{j_r,j_s}\right) \ell_{j_r} \ell_{j_1}\cdots \ell_{j_{r-1}} \ell_{j_{r+1}} \cdots \ell_{j_k} \Omega\,,
    \end{align*}
    which is a sum over $k$ vectors of the form $\ell_{i_1}\cdots \ell_{i_k}\Omega$, each
    scaled by a coefficient of magnitude at most $1$.
    Since $N_\Gamma|_p$ is self-adjoint, this implies
    \[
        |\lambda \langle x,y\rangle_\Gamma| = |\langle N_\Gamma|_p x,y\rangle_\Gamma| = |\langle  x, N_\Gamma|_p y\rangle_\Gamma|\le k |\langle x,y\rangle_\Gamma|\,.
    \]
    So all eigenvalues of $N_\Gamma|_p$ are at most $p$; as $N_\Gamma|_p$ is positive semidefinite, this concludes the proof.
\end{proof}

\begin{lemma}[Clique bound]\label{lem:clique-bound}
    If $\Gamma$ is a simple graph, then $\|N_\Gamma\|\le \omega(\Gamma)$, where $\omega(\Gamma)$ denotes the maximum clique size in $\Gamma$.
\end{lemma}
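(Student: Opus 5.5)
The plan is to use the trace monoid description of the $\Gamma$-Fock space from Example~\ref{ex:traceMonoid}, and to show that $N_\Gamma$ is diagonal in the orthonormal basis of equivalence classes of words. In that basis its eigenvalue on a class $[w]$ is the number of distinct letters that can be moved to the front of $w$; these letters pairwise commute, so there are at most $\omega(\Gamma)$ of them.

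\textbf{Step 1: the basis.} By Example~\ref{ex:traceMonoid}, $\mathcal F_\Gamma(\CC^n)$ has an orthonormal basis $\{[w]\}$ indexed by trace monoid elements. Here $[w]$ is the class of the word $w$ under swaps of adjacent letters that are adjacent in $\Gamma$. The creation operator acts by $\ell_i[w] = [iw]$.

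\textbf{Step 2: the action of $\ell_i^*$.} I claim that $\ell_i^*[w] = [w']$ if some representative of $[w]$ begins with $i$, where $[w']$ is the class obtained by deleting that leading $i$, and $\ell_i^*[w]=0$ otherwise. To verify this, compute $\langle [v], \ell_i^*[w]\rangle_\Gamma = \langle [iv],[w]\rangle_\Gamma$. This equals $1$ exactly when $[iv]=[w]$ and $0$ otherwise.

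It remains to check that at most one class $[v]$ satisfies $[iv]=[w]$. This is the left cancellation property of trace monoids. It can also be read off the recursive inner product formula. When $\Gamma_{ii}=0$ and the weights are $0/1$, each surviving term corresponds to an occurrence $j_r=i$ preceded only by letters commuting with $i$. There can be at most one such occurrence: a second one would have to pass over the first $i$, but $\Gamma_{ii}=0$ forbids this. So the sum has at most one nonzero term.

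\textbf{Step 3: diagonalization.} It follows that $\ell_i\ell_i^*[w] = [w]$ if $i$ is a possible first letter of $[w]$, and $\ell_i\ell_i^*[w]=0$ otherwise. Hence
\[
N_\Gamma[w] = |F(w)|\,[w],
\]
where $F(w)$ is the set of letters $i$ such that some representative of $[w]$ starts with $i$.

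\textbf{Step 4: $F(w)$ is a clique.} Take distinct $i,j\in F(w)$. I will show $\Gamma_{ij}=1$. The letter $i$ moves to the front, so its first occurrence in $w$ is preceded only by letters adjacent to $i$ in $\Gamma$; the same holds for $j$. Say the first $i$ occurs before the first $j$. For $j$ to reach the front it must be swapped past that $i$, which requires $\Gamma_{ij}=1$.

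Therefore $|F(w)|\le\omega(\Gamma)$, and since $N_\Gamma$ is diagonal in an orthonormal basis, $\|N_\Gamma\|\le \omega(\Gamma)$.

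\textbf{Main obstacle.} The delicate point is Step 2, namely justifying uniqueness of the deleted class and that the adjoint acts as stated. I would argue this directly from the recursive formula in Definition~\ref{def:gammaFock}, as sketched above, rather than relying on unstated trace monoid facts.
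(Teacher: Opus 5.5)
Your proof is correct. It is essentially the alternative argument the paper sketches right after its main proof: in the trace-monoid basis, $N_\Gamma e_w=\lambda e_w$ with $\lambda$ the number of left-removable letters of $w$, and these letters form a clique. You go further than that sketch by justifying the action of $\ell_i^*$ from the recursive formula for $\langle\cdot,\cdot\rangle_\Gamma$. There, $\Gamma_{ii}=0$ kills every occurrence of $i$ except the first, and the first survives exactly when all earlier letters are neighbors of $i$. If you define $F(w)$ by this condition on a fixed representative $w$, Steps 3 and 4 go through directly, without left cancellation or independence of $F$ from the representative.

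The paper's main proof is purely algebraic. From $\ell_i^*\ell_i=\mathbf 1$, each $\pi_i=\ell_i\ell_i^*$ is a projection. The $\pi_i$ commute pairwise: $\ell_i$ and $\ell_j$ commute when $\Gamma_{ij}=1$, and $\pi_i\pi_j=\ell_i(\ell_i^*\ell_j)\ell_j^*=0$ when $\Gamma_{ij}=0$. One then writes $N_\Gamma=\sum_S|S|\rho_S$ with $\rho_S=\prod_{i\in S}\pi_i\prod_{i\notin S}(\mathbf 1-\pi_i)$, and $\rho_S=0$ unless $S$ is a clique.

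That argument uses only the $\Gamma$-CCR, so it applies to any operators satisfying them, not just the Fock model. Yours depends on the combinatorial description of $\mathcal F_\Gamma(\CC^n)$, but it exhibits the full spectrum. In particular it gives $\|N_\Gamma\|=\omega(\Gamma)$, by taking $w$ to spell out a maximum clique.
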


\noindent
We note that this does not contradict our previous observation that $N_{\Gamma}$ is unbounded in the bosonic case $\Gamma \equiv 1$: in that case we also have $\Gamma_{i,i} = 1$, while here we assume $\Gamma_{i,i} = 0$.

Lemma~\ref{lem:clique-bound} also appears implicitly in the proof of~\cite[Theorem 1.5]{youssefTuran}. For completeness, we include a proof below.

\begin{proof}
    Let
    $\pi_i \defeq \ell_i \ell_i^*$. Since $\ell_i^* \ell_i = \mathbf{1}$ by the CCR, each $\pi_i$ is an
    orthogonal projection. If $i,j$ are
    adjacent, then by the CCR, one verifies that
    $(\ell_i \ell_j - \ell_j \ell_i)^* (\ell_i \ell_j - \ell_j \ell_i) = 0$,
    so $\ell_i$ and $\ell_j$ (and $\ell_i$ and $\ell_j^*$) commute, and hence so do $\pi_i$ and $\pi_j$. If $i,j$ are not adjacent in $\Gamma$, then $\pi_i \pi_j = \ell_i (\ell_i^* \ell_j) \ell_j^*=0$, so again $\pi_i$ and $\pi_j$ commute. For $S \subseteq [n]$, define
    \[
        \rho_S \defeq\prod_{i\in S} \pi_i \prod_{i\in [n]\setminus S} (\mathbf{1}-\pi_i).
    \]
    We have $\pi_i = \pi_i \prod_{j\neq i} (\pi_j + (\mathbf{1}-\pi_j)) = \sum_{S \subseteq [n]: i \in S} \rho_S$, and therefore
    $\sum_{i = 1}^n \pi_i = \sum_{S\subseteq [n]} |S| \rho_S$.
    Further, $\rho_S=0$ unless $S$ is a clique
    of $\Gamma$, so that 
    \[ N_\Gamma=\sum_{i = 1}^n \pi_i \preceq \omega(\Gamma) \sum_{S \subseteq [n]} \rho_S = \omega(\Gamma) \mathbf{1}, \]
    giving the desired bound.
\end{proof}

This proof of Lemma~\ref{lem:clique-bound}
only uses the CCR relations
abstractly.
Alternatively, one may also argue using
the trace monoid interpretation in Example~\ref{ex:traceMonoid}, as follows. On a basis
element $e_w$ corresponding to a word $w$, we have
$N_\Gamma e_w = \lambda e_w$,
where $\lambda$ is the number of letters
that are ``left-removable'' from $w$.
In particular, all of the basis elements are eigenvectors of $N_{\Gamma}$ and thus the eigenvalues are these values of $\lambda$ for each word $w$.
But, the set
of such left-removable letters from any word forms a clique in $\Gamma$, and so $\lambda\le \omega(\Gamma)$.

\section{Norm bounds under general commutation relations}\label{sec:general_norm_bounds}

Next, we use the
same approach as in Section~\ref{sec:gaussian} to derive
norm bounds for matrix pencils of operators satisfying
the general $\Gamma$-CCR, as defined in Section~\ref{sec:fock}. Our main theorems in this section are as follows.
\begin{itemize}
    \item Theorem~\ref{thm:q-proj} (Section~\ref{sec:deformedNCK})
    establishes an ``intrinsic $q$-Gaussian'' norm bound, akin to an improved NCK inequality with an error term associated to the $q$-commutation relations.
    \item Theorem~\ref{thm:variational} (Section~\ref{sec:lehner}) gives a variational norm bound for intrinsic freeness generalizing the upper bound of Lehner's formula to operators satisfying general $\Gamma$-CCR.
    \item Theorems~\ref{thm:deformed} and~\ref{thm:intrinsic2} (Section~\ref{sec:xfreeBound}) specialize this variational approach to the Gaussian matrix setting, obtaining a norm bound on Gaussian matrices given by a deformation of Lehner's variational principle and deriving Theorem~\ref{thm:strong-intrinsic} by similar calculations.
\end{itemize}

\subsection{A \texorpdfstring{$q$}{q}-deformed NCK inequality}
\label{sec:deformedNCK}

Theorem~\ref{thm:operator-NCK} shows how non-commutativity of coefficients can sharpen eigenvalue estimates of random matrices. Using the same approach, we can also study the behavior of matrices and operators with {\em partially commutative} structure. 
We consider the case where this structure
is encoded by the \emph{$q$-Gaussian distribution}.
Let $q\in [0,1)$ and consider the creation and annihilation operators $\ell$ and $\ell^\ast$ satisfying
\begin{align*}
    \ell^*\ell-q\ell\ell^* = 1\,.
\end{align*}
The $q$-Gaussian distribution is the unique distribution $\gamma_q$ whose moments are given by the vacuum states of powers of $\ell+\ell^*$. They can be characterized combinatorially as
\begin{align*}
    \int_{-\infty}^{\infty}t^p \,\mathrm d\gamma_q(t) = \inner{\Omega}{(\ell+\ell^*)^p\Omega}= \sum_{\pi\in P_2([p])}q^{c(\pi)}\,,
\end{align*}
where $P_2([p])$ is the set of matchings (equivalently, partitions into parts of size exactly 2) of the set $[p]$ and $c(\pi)$ is the number of crossings in such a matching $\pi$.
For $q\in [0,1)$, this distribution smoothly interpolates between the semicircular and Gaussian distributions (the limit as $q \to 1$), is symmetric and compactly supported, and its edge is given by
\begin{align*}
    \norm{\ell +\ell^*} = \frac{2}{\sqrt{1-q}}\,.
\end{align*}

The following theorem extends Theorem~\ref{thm:operator-NCK} by comparing a general $\Gamma$-CCR system with the corresponding operator model satisfying the $q$-Gaussian commutation relation. The error is controlled by the extent to which the matrix coefficients fail to transform the $\Gamma$-commutation relation into the $q$-commutation relation.

\begin{proposition}\label{prop:q-NCK}
    Let $A_1,\ldots, A_n \in M_d(\C)_\sa$, $\ell_1,\ldots, \ell_n$ be operators satisfying the $\Gamma$-CCR relations, and $\sL \defeq \sum_{i}A_i\otimes \ell_i$. Define the partial commutator
    \[ 
        C_q \defeq [\Gamma_{i,j}A_jA_i-qA_iA_j]_{i, j = 1}^n\in M_{dn}(\C)\,. 
    \] 
    Then
    \begin{align*}
        \sL^*\sL-q\sL\sL^*\preceq \sum_{i=1}^nA_i^2\otimes \mathbf{1} + \lambda_{\max}^+(C_q) I_d \otimes N_\Gamma\,. 
    \end{align*}
\end{proposition}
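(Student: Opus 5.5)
We follow the computation in the proof of Theorem~\ref{thm:operator-NCK}: expand both products and use the $\Gamma$-CCR to move every annihilation operator to the right of every creation operator. This leaves a normally ordered expression, which we then bound in Loewner order by factoring through the column operator $\mathcal{V} \colonequals [I_d\otimes \ell_1^*;\ \cdots;\ I_d\otimes\ell_n^*]$ (the vertical stack). We will use that $\mathcal{V}^*\mathcal{V} = I_d\otimes N_\Gamma$.

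First we expand, using that the $A_i$ are self-adjoint:
\[
\sL^*\sL = \sum_{i,j} A_iA_j\otimes \ell_i^*\ell_j,
\qquad
\sL\sL^* = \sum_{i,j} A_iA_j \otimes \ell_i\ell_j^*.
\]
For the first sum, the relation \eqref{eq:ccr} gives $\ell_i^*\ell_j = \delta_{ij}\mathbf 1 + \Gamma_{i,j}\ell_j\ell_i^*$, so
\[
\sL^*\sL = \sum_i A_i^2\otimes\mathbf 1 + \sum_{i,j}\Gamma_{i,j}A_iA_j\otimes \ell_j\ell_i^*.
\]
After relabeling $i\leftrightarrow j$ (and using $\Gamma_{i,j}=\Gamma_{j,i}$), this becomes
\[
\sL^*\sL = \sum_i A_i^2\otimes\mathbf 1 + \sum_{i,j}\Gamma_{i,j}A_jA_i\otimes \ell_i\ell_j^*.
\]
Subtracting $q\sL\sL^*$ term by term, we arrive at
\[
\sL^*\sL - q\sL\sL^* = \sum_i A_i^2\otimes \mathbf 1 + \sum_{i,j} (\Gamma_{i,j}A_jA_i - qA_iA_j)\otimes \ell_i\ell_j^*.
\]

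Next we recognize the second term as the quadratic form $\mathcal{V}^*(C_q\otimes\mathbf 1)\mathcal{V}$, exactly as in the proof of Theorem~\ref{thm:operator-NCK}. Here the $(i,j)$ block of $C_q$ is $\Gamma_{i,j}A_jA_i - qA_iA_j$, and the row operator $\mathcal{V}^* = [I_d\otimes\ell_1,\dots,I_d\otimes\ell_n]$ multiplies on the left. We should check that $C_q$ is self-adjoint. The adjoint of block $(i,j)$ is $\Gamma_{i,j}A_iA_j - qA_jA_i$, which equals block $(j,i)$ by the symmetry of $\Gamma$ and real edge weights. Hence $C_q\preceq \lambda_{\max}^+(C_q)\, I_{dn}$, and conjugating by $\mathcal V$ gives
\[
\mathcal{V}^*(C_q\otimes \mathbf 1)\mathcal V \preceq \lambda^+_{\max}(C_q)\,\mathcal V^*\mathcal V = \lambda_{\max}^+(C_q)\, I_d\otimes N_\Gamma.
\]
The positive part is needed here because $\mathcal V^*\mathcal V\succeq 0$ but $\lambda_{\max}(C_q)$ may be negative. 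Combining this with the previous display yields the claim.

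The main obstacle is bookkeeping rather than anything conceptual: applying the commutation relation in the correct direction and keeping the index order consistent, so that the block multiplying $\ell_i\ell_j^*$ is $\Gamma_{i,j}A_jA_i$ (partial transpose) from $\sL^*\sL$ and $qA_iA_j$ from $\sL\sL^*$. A second technical point is that the $\ell_i$ may be unbounded, as in the bosonic case. There we interpret the Loewner inequality as an inequality of quadratic forms on finite-particle vectors, which suffices since all operators involved preserve finite particle spaces up to degree shifts.
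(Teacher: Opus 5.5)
Your proof is correct and is essentially the paper's argument: normal-order $\sL^*\sL$ via the $\Gamma$-CCR, relabel using the symmetry of $\Gamma$, subtract $q\sL\sL^*$, and bound the quadratic form $\mathcal V^*(C_q\otimes\mathbf 1)\mathcal V$ by $\lambda_{\max}^+(C_q)\, I_d\otimes N_\Gamma$. One minor quibble: the positive part is not actually needed, because $C_q\preceq\lambda_{\max}(C_q)I_{dn}$ conjugates to $\mathcal V^*(C_q\otimes\mathbf 1)\mathcal V\preceq\lambda_{\max}(C_q)\,\mathcal V^*\mathcal V$ for any real value of $\lambda_{\max}(C_q)$, including negative ones, so passing to $\lambda_{\max}^+$ is only a harmless weakening (convenient later when the triangle inequality is applied in Theorem~\ref{thm:q-proj}).
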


\begin{proof}
    The proof goes simply by expanding and
    using the $\Gamma$-CCR relations:
    \begin{align*}
        \sL^* \sL &= \sum_{i,j=1}^nA_{i}A_j\otimes \ell_i^*\ell_j \\
        &=\sum_{i=1}^nA_i^2\otimes \mathbf{1}+\sum_{i,j=1}^n\Gamma_{i,j}A_iA_j\otimes \ell_j\ell_i^*\\
        &=\sum_{i=1}^nA_i^2\otimes \mathbf{1}+q\sum_{i,j=1}^n A_iA_j\otimes \ell_i\ell_j^* + \sum_{i,j=1}^n(\Gamma_{i,j}A_{j}A_i-qA_iA_j)\otimes \ell_i\ell_j^*\\
        &=\sum_{i=1}^nA_i^2\otimes \mathbf{1}+q\sL\sL^* + \begin{bmatrix} I \otimes \ell_1 & \ldots & I\otimes \ell_n\end{bmatrix} ( C_q \otimes \mathbf 1)\begin{bmatrix} I \otimes \ell_1^*\\\vdots\\I \otimes \ell_n^*\end{bmatrix}\\
        &\preceq \sum_{i=1}^n A_i^2\otimes \mathbf{1}+q\sL\sL^*+\lambda_{\max}^+(C_q) \cdot I\otimes N_\Gamma\,.\qedhere
    \end{align*}
\end{proof}

\noindent As before, Proposition~\ref{prop:q-NCK} then leads to the following bounds on moments and norms.

\begin{theorem}[Intrinsic $q$-Gaussianity]\label{thm:q-proj}
    Let $A_1,\ldots, A_n \in M_d(\C)_\sa$, $\ell_1,\ldots, \ell_n$ be the creation operators on the $\Gamma$-Fock space, and $\sX = \sum_{i=1}^nA_i\otimes (\ell_i+\ell_i^*)$. Then for all $p\ge 1$ and $q\in [0,1)$,
    \begin{align*}
        \left(\tr\otimes\tau(\sX^{2p})\right)^{\frac{1}{2p}}\leq \frac{2}{\sqrt{1-q}}\cdot \sqrt{\norm{\sum_{i=1}^n A_i^2}+\lambda_{\max}^+(C_q)\cdot \norm{N_\Gamma|_p}}\,.
    \end{align*}
    Further, if $\Gamma_{ii}<1$ for all $i$, then we have
    \[ \|\mathcal{X}\| \leq \frac{2}{\sqrt{1-q}}\cdot \sqrt{\norm{\sum_{i=1}^n A_i^2}+\lambda_{\max}^+(C_q)\cdot \norm{N_\Gamma}}\,. \]
\end{theorem}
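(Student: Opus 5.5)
The plan is to mirror the proof of Corollary~\ref{cor:nck-projection}, with Proposition~\ref{prop:q-NCK} in place of Theorem~\ref{thm:operator-NCK}. The new ingredient is absorbing the term $q\sL\sL^*$ into the left-hand side.

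\textbf{Step 1: truncation.} Let $P_p$ denote the orthogonal projection of $\mathcal F_\Gamma(\CC^n)$ onto the direct sum of the $k$-particle spaces with $k\le p$. This subspace is finite-dimensional because $n$ is finite. Set $\sL_p \defeq (I_d\otimes P_p)\sL(I_d\otimes P_p)$ and $\sX_p \defeq (I_d\otimes P_p)\sX(I_d\otimes P_p)=\sL_p+\sL_p^*$. These are bounded operators.

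Since $\ell_i$ raises the particle number by one and $\ell_i^*$ lowers it by one, we have
\[
\sL_p=\sL(I_d\otimes P_{p-1}), \qquad \sL^*(I_d\otimes P_{p-1})=(I_d\otimes P_{p-2})\sL^*(I_d\otimes P_{p-1}).
\]
Compressing Proposition~\ref{prop:q-NCK} by $I_d\otimes P_{p-1}$ therefore gives
\[
\sL_p^*\sL_p \preceq q\,\sL_{p-1}\sL_{p-1}^* + \Bigl(\Bigl\|\sum_i A_i^2\Bigr\| + \lambda_{\max}^+(C_q)\,\|N_\Gamma|_{p}\|\Bigr)(I_d\otimes P_{p-1}).
\]
Here we used $N_\Gamma P_{p-1}=P_{p-1}N_\Gamma P_{p-1}$, whose norm is at most $\|N_\Gamma|_{p-1}\|\le\|N_\Gamma|_p\|$. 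Call the bracketed constant $M_p$.

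\textbf{Step 2: absorbing the $q$-term.} Since $\sL_{p-1}$ is a compression of $\sL_p$, we have $\|\sL_{p-1}\sL_{p-1}^*\|\le\|\sL_p\|^2$. Taking norms in the inequality of Step~1 gives $\|\sL_p\|^2\le q\|\sL_p\|^2+M_p$. As $q<1$ and all quantities are finite, this yields $\|\sL_p\|^2\le M_p/(1-q)$. The triangle inequality then gives
\[
\|\sX_p\|\le 2\|\sL_p\|\le \frac{2}{\sqrt{1-q}}\sqrt{M_p}.
\]
This absorption is the one genuinely new point relative to Section~\ref{sec:gaussian}. It relies on the truncations being bounded and on the subtle index shift between $\sL_{p-1}$ and $\sL_p$, which I expect to be the step needing the most care.

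\textbf{Step 3: moments.} Applying $\sX$ a total of $p$ times to $e_j\otimes\Omega$ produces only vectors with at most $p$ particles, so $\sX^p(e_j\otimes\Omega)=\sX_p^p(e_j\otimes\Omega)$. Since $\sX$ is symmetric,
\[
\tr\otimes\tau(\sX^{2p})=\frac1d\sum_j\|\sX_p^p(e_j\otimes\Omega)\|^2\le\|\sX_p\|^{2p}.
\]
Combined with Step~2, this is the first claim.

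\textbf{Step 4: the norm bound.} If $\Gamma_{ii}<1$, then $\sX$ is bounded, and we may assume $\|N_\Gamma\|<\infty$, since otherwise there is nothing to prove. For any vector $\xi$ supported on finitely many particle spaces, $\sX\xi=\sX_p\xi$ once $p$ is large enough. Hence
\[
\|\sX\xi\|\le \frac{2}{\sqrt{1-q}}\sqrt{\Bigl\|\sum_i A_i^2\Bigr\|+\lambda_{\max}^+(C_q)\|N_\Gamma\|}\;\|\xi\|,
\]
because $\|N_\Gamma|_p\|\le\|N_\Gamma\|$. Such vectors are dense, so the same bound holds for $\|\sX\|$.
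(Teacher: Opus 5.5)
Your proof is correct and follows the paper's argument: you truncate to at most $p$ particles, compress Proposition~\ref{prop:q-NCK}, absorb the $q\sL\sL^*$ term using $\|\sL_p\sL_p^*\|=\|\sL_p^*\sL_p\|$ and $q<1$, and pass to moments via $\sX^p(e_j\otimes\Omega)=\sX_p^p(e_j\otimes\Omega)$. The differences are cosmetic: you compress by $P_{p-1}$ and then use $\|\sL_{p-1}\|\le\|\sL_p\|$, whereas the paper compresses by $\Pi_p$ and uses the exact identity $\Pi_p\sL\sL^*\Pi_p=\sL_p\sL_p^*$; and you get the untruncated bound by density of finite-particle vectors, whereas the paper reruns the argument with $\Pi_p$ replaced by $\mathbf 1$.
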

\noindent
We note that the factor outside the square root on the right-hand side is precisely the edge of the $q$-Gaussian distribution, while the first term inside the square root is $\sigma(X)^2$ for $X = \sum_{i = 1}^n g_i A_i$ the associated Gaussian series model.

\begin{proof}
    Let $\Pi_p$ be the projection onto the space of at most $p$ particles and $\sL_p \defeq (I\otimes\Pi_p)\sL(I\otimes \Pi_p)$. As in the proof of Corollary \ref{cor:nck-projection}, we
    have
    \[
        \tr \otimes \tau(\mathcal X^{2p}) = \frac 1d \sum_{i=1}^d \|\mathcal X^p(e_i \otimes \Omega)\|^2 = \frac 1d \sum_{i=1}^d \|(\mathcal L_p + \mathcal L_p^\ast)^p(e_i \otimes \Omega)\|^2\le \|\mathcal L_p + \mathcal L_p^\ast\|^{2p}\le 2^{2p}\|\mathcal L_p \mathcal L_p^\ast\|^{p}\,.
    \]
    Furthermore, by Proposition~\ref{prop:q-NCK}, 
    \begin{align*}
        \norm{\sL_p^*\sL_p}\leq \norm{\Pi_p\sL^*\sL\Pi_p}&\leq \norm{(I\otimes\Pi_p)\left(\sum_{i=1}^n A_i^2\otimes\mathbf{1}+q\sL\sL^*+\lambda_{\max}^+(C_q)\cdot I\otimes N_\Gamma \right)(I\otimes\Pi_p)}\\
        &\leq \norm{\sum_{i=1}^n A_i^2}+\lambda_{\max}^+(C_q)\norm{N_\Gamma|_p}+q\norm{(I\otimes\Pi_p)\sL\sL^*(I\otimes\Pi_p)}\\
        &= \norm{\sum_{i=1}^n A_i^2}+\lambda_{\max}^+(C_q)\norm{N_\Gamma|_p}+q\norm{\sL_p\sL_p^*}\,.
    \end{align*}
    Using  $\norm{\sL_p\sL_p^*} = \norm{\sL_p^*\sL_p}$ and rearranging the
    inequality concludes the proof of the first statement. 
    
    For the second
    statement, we use directly $\|\mathcal X\| \le 2 \|\mathcal L\mathcal L^\ast\|^{\frac 12}$ and proceed with
    the exact same argument as in the truncated setting, replacing $\Pi_p$ by $\mathbf 1$.
\end{proof}

\subsection{Lehner-type variational norm bounds beyond freeness}
\label{sec:lehner}

Next, we return to the question
of intrinsic freeness, 
now for the more general class of operators
satisfying the $\Gamma$-commutation relations
introduced in Section~\ref{sec:fock}. 
To that end, we introduce a variational refinement
of the operator inequalities developed 
above. 

Our model for such a bound is Lehner's variational formula~\cite{Leh99} (see~\eqref{eq:road_lehner}) for the norm of pencils of {\em freely independent} semicircular variables.
Here, we give a new proof of just the
{\em upper bound} in~\eqref{eq:road_lehner} which
also extends to arbitrary CCR relations.

\begin{proposition}\label{prop:road_master}
    In the setting of Proposition~\ref{prop:q-NCK}, we have for any $Z\succ 0$,
    \[
        \mathcal L + \mathcal L^\ast \preceq
        \left(Z^{-1}+\sum_{i=1}^n A_iZ A_i\right)\otimes\mathbf{1}
        +\lambda_{\max}^+(C_\Gamma(Z))I\otimes  N_\Gamma\,,
    \]
    where 
    \[ C_{\Gamma}(Z)\defeq [\Gamma_{i,j}A_jZA_i]_{i, j = 1}^n \]
    is the partial commutator scaled by $Z$. 
\end{proposition}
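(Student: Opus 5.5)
The plan is a weighted AM--GM (Cauchy--Schwarz) step followed by the same commutation-relation expansion as in Proposition~\ref{prop:q-NCK}, now with $Z$ inserted between the coefficients. Write $\sZ \defeq Z\otimes \mathbf 1 \succ 0$. Since $(\sZ^{-1/2} - \sZ^{1/2}\sL)^*(\sZ^{-1/2}-\sZ^{1/2}\sL)\succeq 0$, expanding gives
\[
    \sL + \sL^* \preceq \sZ^{-1} + \sL^*\sZ\sL = Z^{-1}\otimes \mathbf 1 + \sL^* (Z\otimes\mathbf 1)\sL\,.
\]
This reduces the claim to bounding $\sL^*\sZ\sL$ in the Loewner order. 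That is exactly the variant $\sL^*\sZ\sL+\sZ^{-1}$ announced in the proof-techniques overview. (If one prefers, one can equally use $\sL\sZ\sL^*$; the choice of $\sL^*\sZ\sL$ is the one where the commutation relations help.)

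Next, expand
\[
    \sL^*(Z\otimes\mathbf 1)\sL = \sum_{i,j=1}^n A_iZA_j\otimes \ell_i^*\ell_j .
\]
Apply the $\Gamma$-CCR, $\ell_i^*\ell_j=\delta_{ij}\mathbf 1+\Gamma_{i,j}\ell_j\ell_i^*$. The $\delta$ term yields $\sum_i A_iZA_i\otimes\mathbf 1$. In the remaining term $\sum_{i,j}\Gamma_{i,j}A_iZA_j\otimes \ell_j\ell_i^*$, relabel $i\leftrightarrow j$, using the symmetry $\Gamma_{i,j}=\Gamma_{j,i}$, to get $\sum_{i,j}\Gamma_{i,j}A_jZA_i\otimes \ell_i\ell_j^*$. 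This factors as
\[
    \begin{bmatrix} I\otimes\ell_1 & \cdots & I\otimes \ell_n\end{bmatrix}\bigl(C_\Gamma(Z)\otimes\mathbf 1\bigr)\begin{bmatrix} I\otimes \ell_1^*\\ \vdots \\ I\otimes\ell_n^*\end{bmatrix},
\]
exactly as in the proofs of Theorem~\ref{thm:operator-NCK} and Proposition~\ref{prop:q-NCK}. Note that $C_\Gamma(Z)$ is self-adjoint: its $(j,i)$ block is $\Gamma_{i,j}A_iZA_j=(\Gamma_{i,j}A_jZA_i)^*$, using that the $A_i$ and $Z$ are self-adjoint and $\Gamma$ is real symmetric.

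Finally, bound $C_\Gamma(Z)\preceq \lambda^+_{\max}(C_\Gamma(Z))\, I_{dn}$. Conjugating by the column operator $V=[I\otimes\ell_j^*]_j$ preserves the Loewner order, since $V^*(\cdot)V$ is a positive map. We have $V^*V=\sum_i I\otimes \ell_i\ell_i^* = I\otimes N_\Gamma$. Here $\lambda^+_{\max}$ rather than $\lambda_{\max}$ is needed because $\lambda_{\max}$ may be negative while $N_\Gamma\succeq0$, and the bound $C\preceq \lambda I$ would be unusable with a negative $\lambda$ and a noninvertible $V$. Combining the three steps gives the stated inequality.

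The only delicate point is making the AM--GM step and the manipulations rigorous when the operators are unbounded (e.g.\ $\Gamma_{ii}=1$). I would handle this as in the paper, by interpreting all inequalities as quadratic-form inequalities on the dense span of finite-particle vectors, where every term is well defined. Otherwise the argument is a routine algebraic expansion; the index relabeling and the self-adjointness of $C_\Gamma(Z)$ are the steps to check carefully.
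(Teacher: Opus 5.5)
Your proof is correct and follows the paper's argument essentially line for line: the same completed-square inequality $\sL+\sL^*\preceq \sZ^{-1}+\sL^*\sZ\sL$, the same $\Gamma$-CCR expansion with the index relabeling, and the same block factorization through $C_\Gamma(Z)$ bounded by $\lambda_{\max}^+(C_\Gamma(Z))\,I\otimes N_\Gamma$. One small correction to a side remark: conjugating by $V$ gives $V^*(C_\Gamma(Z)\otimes\mathbf 1)V\preceq\lambda_{\max}(C_\Gamma(Z))\,I\otimes N_\Gamma$ for either sign of $\lambda_{\max}$, so the positive part is only a harmless weakening at this step; it matters later, when $N_\Gamma$ is replaced by its norm.
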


\noindent Applying Proposition~\ref{prop:road_master} to $-A_i$ yields a similar lower bound. In the case $\Gamma\equiv 0$, we recover the upper bound in Lehner's formula.

\begin{proof}
    Let us denote $\mathcal {Z} \defeq Z\otimes \mathbf{1}$. We have
    \[
        0\preceq \left(\mathcal Z^{\frac 12} \mathcal L  - \mathcal Z^{-\frac 12}\right)^\ast \left(\mathcal Z^{\frac 12} \mathcal L - \mathcal Z^{-\frac 12}\right) = \mathcal L^* \mathcal Z \mathcal L + \mathcal Z^{-1} - \mathcal L - \mathcal L^*\,,
    \]
    which implies $\mathcal L+\mathcal L^\ast\preceq \mathcal Z^{-1}+ \mathcal L^* \mathcal Z \mathcal L$.
    Applying the non-commutative relations,
    \[
        \mathcal L^* \mathcal Z \mathcal L = \sum_{i,j=1}^n A_i Z A_j \otimes \ell_i^* \ell_j = \sum_{i=1}^n A_i Z A_i \otimes \mathbf 1 + \sum_{i,j=1}^n \Gamma_{ij} A_i Z A_j \otimes \ell_j \ell_i^*\,.
    \]
    It only remains to bound the cross terms using the
    partial commutator:
    \begin{align*}
        \sum_{i,j=1}^n \Gamma_{ij} A_i Z A_j \otimes \ell_j \ell_i^* &= \sum_{i,j=1}^n \Gamma_{ij} A_j Z A_i \otimes \ell_i \ell_j^*\\
        &= \begin{bmatrix} I \otimes \ell_1 & \cdots & I\otimes \ell_n\end{bmatrix} (C_\Gamma(Z) \otimes \mathbf 1)\begin{bmatrix} I \otimes \ell_1^*\\\vdots\\I \otimes \ell_n^*\end{bmatrix}\\
        &\preceq \lambda_{\max}^+(C_\Gamma(Z)) I \otimes \sum_{i=1}^n \ell_i \ell_i^* = \lambda_{\max}^+(C_\Gamma(Z)) I \otimes N_\Gamma\,.\qedhere
    \end{align*}
\end{proof}

In the general situation where the number
operator
is unbounded, we may still apply
Proposition~\ref{prop:road_master} to 
obtain bounds on the
trace moments using the
same projection idea as in the previous
sections. 

\begin{theorem}[Variational intrinsic freeness bound]\label{thm:variational}
    Let $A_0,\ldots, A_n \in M_d(\C)_\sa$, $\ell_1,\ldots, \ell_n$ be the creation operators on the $\Gamma$-Fock space, and $\sX = A_0\otimes \mathbf 1 +\sum_{i=1}^nA_i\otimes (\ell_i+\ell_i^*)$. Then for any $p\ge 1$
    \[
        \left(\tr \otimes \tau (\sX^{2p})\right)^{\frac 1{2p}}\le \max_{\eps\in \{\pm 1\}} \inf_{Z\succ 0} \left\{\lambda_{\max} \left(\eps A_0 + \sum_{i=1}^n A_i Z A_i + Z^{-1}\right) + \lambda_{\max}^+(C_\Gamma(Z)) \norm{N_\Gamma|_p}\right\}\,.
    \]
\end{theorem}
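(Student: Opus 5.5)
Our plan is to combine the truncation argument from the proof of Corollary~\ref{cor:nck-projection} and Theorem~\ref{thm:q-proj} with the Loewner bound of Proposition~\ref{prop:road_master}. The non-zero $A_0$ term is harmless, since it commutes with the particle projections.

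First, let $\Pi_p$ denote the projection onto the direct sum of the $k$-particle spaces with $k\le p$. Set $\sL_p \defeq (I\otimes\Pi_p)\sL(I\otimes\Pi_p)$ and
\[
\sX_p \defeq (I\otimes \Pi_p)\sX(I\otimes\Pi_p) = A_0\otimes \Pi_p + \sL_p+\sL_p^*.
\]
Each application of $\sX$ changes the particle number by at most one, so $\sX^p(e_j\otimes\Omega)=\sX_p^p(e_j\otimes\Omega)$. Hence
\[
\tr\otimes\tau(\sX^{2p}) = \frac1d\sum_{j=1}^d\|\sX_p^p(e_j\otimes\Omega)\|^2\le \|\sX_p\|^{2p},
\]
and it suffices to bound $\|\sX_p\|$, viewed as an operator on the range of $I\otimes\Pi_p$. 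Since $\sX_p$ is self-adjoint,
\[
\|\sX_p\|=\max_{\eps\in\{\pm1\}}\lambda_{\max}(\eps\sX_p).
\]
We treat $\eps=-1$ by replacing every $A_i$ (including $A_0$) with $-A_i$. This leaves $\sum_i A_iZA_i$ and $C_\Gamma(Z)$ unchanged, so we reduce to bounding $\lambda_{\max}(\sX_p)$ by the $\eps=+1$ expression, for an arbitrary fixed $Z\succ0$.

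Second, we fix $Z\succ 0$ and compress the inequality of Proposition~\ref{prop:road_master} by $I\otimes\Pi_p$. Compression preserves the Loewner order, and $Z\otimes \mathbf 1$ commutes with $I\otimes\Pi_p$. The proposition gives $\sL+\sL^*\preceq (Z^{-1}+\sum_iA_iZA_i)\otimes\mathbf 1+\lambda^+_{\max}(C_\Gamma(Z))\,I\otimes N_\Gamma$, and $N_\Gamma$ preserves particle spaces. Compressing therefore yields, on the range of $I\otimes \Pi_p$,
\[
\sX_p\preceq \Bigl(A_0+Z^{-1}+\sum_iA_iZA_i\Bigr)\otimes\Pi_p+\lambda_{\max}^+(C_\Gamma(Z))\, I\otimes N_\Gamma|_p .
\]
The largest eigenvalue of the right-hand side is at most $\lambda_{\max}(A_0+Z^{-1}+\sum_iA_iZA_i)+\lambda_{\max}^+(C_\Gamma(Z))\|N_\Gamma|_p\|$. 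Here we use that $\lambda_{\max}$ is subadditive, that $\lambda_{\max}(B\otimes\Pi_p)=\lambda_{\max}(B)$ on the range of $\Pi_p$, and that the coefficient $\lambda_{\max}^+(C_\Gamma(Z))$ is nonnegative. Taking the infimum over $Z$ and then the maximum over $\eps$ gives the claim.

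The main point to check is the compression step. One might worry that the truncated operator $\sL_p$ differs from a simple restriction of $\sL$. In fact we never need a product identity for the truncated operators. We only compress the single linear inequality $\sL+\sL^*\preceq\cdots$, and $(I\otimes\Pi_p)(\sL+\sL^*)(I\otimes\Pi_p)=\sL_p+\sL_p^*$ holds by definition. In the unbounded bosonic-type cases, Proposition~\ref{prop:road_master} is to be read as a quadratic-form inequality on finite-particle vectors. Since $\Pi_p$ maps into finite-particle vectors, the compression is legitimate. This step is mild, and the rest is bookkeeping.
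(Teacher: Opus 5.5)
Your proof is correct and follows the same route as the paper's. You truncate to at most $p$ particles so that $\tr\otimes\tau(\sX^{2p})\le\|\sX_p\|^{2p}$, compress the Loewner bound of Proposition~\ref{prop:road_master} (applied to $\eps A_i$) by $I\otimes\Pi_p$, and conclude via $\|\sX_p\|=\max_{\eps}\lambda_{\max}(\eps\sX_p)$. Your extra care --- working on the range of $I\otimes\Pi_p$ so that $\lambda_{\max}(B\otimes\Pi_p)=\lambda_{\max}(B)$, and reading the proposition as a quadratic-form inequality on finite-particle vectors --- fills in details the paper leaves implicit.
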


\begin{proof}
    As in the proof of Theorem~\ref{thm:q-proj}, let $\Pi_p$ be the projection on the space of at most $p$ particles. Then we have $\tr \otimes \tau (\sX^{2p})\le \|\mathcal X_p\|^{2p}$, where $\mathcal X_p = A_0 \otimes \Pi_p + \mathcal L_p + \mathcal L_p^\ast$ and $\mathcal L_p = (I\otimes \Pi_p) \mathcal L (I\otimes \Pi_p)$. For each $\eps\in \{\pm 1\}$, applying Proposition~\ref{prop:road_master} to $\eps A_i$ and conjugating by $I\otimes \Pi_p$ gives
    \[
        \eps \mathcal X_p \preceq \left(\eps A_0 + Z^{-1} + \sum_{i=1}^n A_i Z A_i\right)\otimes \Pi_p +\lambda_{\max}^+(C_\Gamma(Z))I\otimes  N_\Gamma|_p\,.
    \]
    Taking the infimum over $Z$ and using $\|\mathcal X_p\| = \max_{\eps\in \{\pm 1\}} \lambda_{\max}(\eps \mathcal X_p)$ concludes the proof.
\end{proof}

\begin{proof}[Proof of Theorem~\ref{thm:generalizedLehnerIntro}]
    The proof is identical to the previous one, except that no truncation is needed. Applying Proposition~\ref{prop:road_master} to $\eps A_i$ for $\eps\in \{\pm 1\}$ directly gives
    \[
        \eps \mathcal X \preceq \left(\eps A_0 + Z^{-1} + \sum_{i=1}^n A_i Z A_i\right)\otimes \mathbf 1 +\lambda_{\max}^+(C_\Gamma(Z))I\otimes  N_\Gamma\,,
    \]
    and we conclude in the exact same way.
\end{proof}

Picking $Z$ to be a scalar multiple of the
identity in Theorem~\ref{thm:generalizedLehnerIntro} recovers a bound of the form of
Corollary~\ref{cor:nck-projection}.
More precisely, when $A_0=0$ and $Z=cI$ for an optimized
choice of $c>0$ and $N_{\Gamma}$ is bounded, we get
\[
    \|\mathcal{X}\| \le 2 \sqrt{\left\|\sum_{i=1}^n A_i^2\right\| + \lambda_{\max}^+(C_\Gamma(I)) \norm{N_\Gamma}}\,.
\]
For example, when $\Gamma$ is a simple graph and $\mathcal{X}$ is a matrix pencil of $\Gamma$-independent operators, we obtain an NCK-like inequality with an error term depending on the clique number $\omega(\Gamma)$.

\subsection{Specialization to Gaussian random matrices}
\label{sec:xfreeBound}

In this section, we specialize the variational bound of Theorem~\ref{thm:variational} to the case of
Gaussian random matrices,
\[
    X = A_0 + \sum_{i=1}^n g_i A_i\,.
\]
In that case, the partial commutator takes
the form
\[
    C_X(Z)\defeq [A_jZA_i]_{i, j = 1}^n\,. 
\]
Combining the weighted moment bound with
the trace method
gives the following deformation of Lehner's variational formula involving $C_X(Z)$.

\begin{theorem}[Deformed Lehner bound]\label{thm:deformed}
  Let $X$ and $C_X(Z)$ be as above. Then
  \[
      \EE\|X\|
      \lesssim \max_{\eps\in \{\pm 1\}}\inf_{Z\succ0}
      \left\{
          \lambda_{\max}\left(\eps A_0 +
              Z^{-1}+\sum_{i=1}^n A_iZA_i
          \right)
          +\log(2d)\lambda_{\max}(C_X(Z))
      \right\}\,.
  \]
\end{theorem}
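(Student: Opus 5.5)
We specialize Theorem~\ref{thm:variational} to the bosonic case $\Gamma \equiv 1$. This is legitimate because Proposition~\ref{prop:road_master} only uses the $\Gamma$-CCR, and the truncation argument only uses that $\sL$ raises and $\sL^*$ lowers the particle number. In this case $C_\Gamma(Z) = C_X(Z)$. The operator $\mathcal{X} = A_0 \otimes \mathbf 1 + \sum_i A_i \otimes (\ell_i + \ell_i^*)$ is the equivalent operator of $X = A_0 + \sum_i g_i A_i$, with $\EE \tr X^{2p} = \tr\otimes\tau(\mathcal{X}^{2p})$. This holds by \eqref{eq: moments-multi} together with linearity, since $A_0$ corresponds to $A_0 \otimes \mathbf 1$.

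By Proposition~\ref{prop:bosonicNbound} (or Lemma~\ref{lem:boundParticle}), $\|N_\Gamma|_p\| \le p$. Theorem~\ref{thm:variational} then gives, for every $p \ge 1$,
\[
\left(\EE \tr X^{2p}\right)^{\frac{1}{2p}} \le \max_{\eps\in\{\pm1\}}\inf_{Z\succ0}\left\{\lambda_{\max}\left(\eps A_0 + Z^{-1} + \sum_{i=1}^n A_i Z A_i\right) + p\,\lambda_{\max}^+(C_X(Z))\right\}.
\]
We note that $\lambda_{\max}^+(C_X(Z)) = \lambda_{\max}(C_X(Z))$. The diagonal blocks $A_i Z A_i$ are positive semidefinite, and for a nonzero $A_i$ some diagonal block is nonzero, so the trace of $C_X(Z)$ is nonnegative and hence $\lambda_{\max}(C_X(Z)) \ge 0$. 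If all $A_i = 0$, both sides are trivial.

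Next we pass from trace moments to the expected norm exactly as in \eqref{eq:norm-trace}: $\EE\|X\| \le d^{1/2p}(\EE\tr X^{2p})^{1/2p}$. We choose $p = \lceil \log d\rceil$, or $p = \lceil\log(2d)\rceil$ to handle $d = 1$ uniformly. Then $d^{1/2p} \le \sqrt e$ and $p \le 2\log(2d)$, and the claimed bound follows with an absolute constant. One point needs care: the choice of $p$ does not depend on $\eps$ or $Z$. Since $d^{1/2p}$ is a constant factor, it factors out of the $\max$ and $\inf$, and $p$ is absorbed into the coefficient $\log(2d)$ of $\lambda_{\max}(C_X(Z))$. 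The constant multiplying the first term is harmless under $\lesssim$.

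The only step requiring genuine attention is justifying that Theorem~\ref{thm:variational} applies when $\Gamma_{ii} = 1$, where $N_\Gamma$ is unbounded. This is handled by the truncation to $\Pi_p$, under which all operators involved become bounded. The key identity is $\tr\otimes\tau(\mathcal{X}^{2p}) \le \|\mathcal{X}_p\|^{2p}$ for the truncated operator $\mathcal{X}_p = A_0\otimes\Pi_p + \sL_p + \sL_p^*$. It holds because $\mathcal{X}^p(e_j\otimes h_0) = \mathcal{X}_p^p(e_j\otimes h_0)$: the term $A_0\otimes \mathbf{1}$ preserves degree, and each application of $\mathcal{X}$ raises the degree by at most one.
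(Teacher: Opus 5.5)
Your proposal is correct and follows the paper's proof essentially verbatim: specialize Theorem~\ref{thm:variational} to $\Gamma\equiv 1$, use $\|N_\Gamma|_p\|\le p$ and $\Tr C_X(Z)\ge 0$ to replace $\lambda_{\max}^+$ by $\lambda_{\max}$, then apply \eqref{eq:norm-trace} with $p\asymp\log(2d)$. One small imprecision: with $p=\lceil\log(2d)\rceil$ you only get $p\le 2\log(2d)$, and pulling that factor $2$ out as an overall constant requires $\lambda_{\max}\bigl(\eps A_0+Z^{-1}+\sum_i A_iZA_i\bigr)\ge 0$, which can fail when $\eps A_0\prec 0$. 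The conclusion still holds: either take $p=\lfloor\log(2d)\rfloor\le\log(2d)$ (for $d\ge 2$) as the paper does, or compare with the sign $\eps$ for which $\lambda_{\max}(\eps A_0)=\|A_0\|$, which costs only a constant factor.
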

\noindent
Comparing with Lehner's formula \eqref{eq:road_lehner} for $\|\mathcal{X}_{\free}\|$, we see that this is precisely Lehner's formula, but with an additional  term in the objective function.
\begin{proof}
  By Jensen's inequality, Theorem~\ref{thm:variational}, and the number of particles bound (Lemma~\ref{lem:boundParticle}), we have,
  for every $p\ge 1$,
  \begin{align*}
      \EE \|X\|
      &\leq d^{\frac 1 {2p}}
        \left(\EE \tr X^{2p}\right)^{\frac 1 {2p}}\\
      &\leq d^{\frac 1 {2p}} \max_{\eps\in \{\pm 1\}}\inf_{Z\succ0}
        \left\{
            \lambda_{\max}\left(\eps A_0 +
                Z^{-1}+\sum_i A_iZA_i
            \right)
            +p\lambda_{\max}(C_X(Z))
        \right\}.
  \end{align*}
  For $p=\lfloor\log(2d)\rfloor$, we have $d^{\frac 1 {2p}}\leq e$.
  Since
  $\Tr C_X(Z)=\sum_i\Tr(A_iZA_i)\geq0$,
  the maximal eigenvalue of $C_X(Z)$ is nonnegative. Replacing $p$ by $\log(2d)$ concludes the proof.
\end{proof}

Theorem~\ref{thm:variational} can also
be used to prove an upper
bound for the norm of 
Gaussian random matrices akin to our strengthened NCK inequality in Theorem~\ref{thm:strong-NCK}, but involving $\|\mathcal X_\free\|$
instead of $\sigma(X)$. We first prove the trace moment version; we will deduce Theorem~\ref{thm:strong-intrinsic} as a corollary. Recall that we define
$v'(X) \defeq \min\{\sigma(X), v(X)\}$.

\begin{theorem}\label{thm:intrinsic2}
    Let $X = A_0+\sum_i g_i A_i$ be a self-adjoint Gaussian random matrix.
    Define 
    \[
        \mathcal X_{\free} \defeq A_0 \otimes \mathbf 1 + \sum_{i=1}^n A_i\otimes s_i\,,
    \]
    where $s_1, \ldots, s_n$ are freely independent semicircular elements.
    Then for any $p\ge 1$,
    \begin{align*}
        \left(\EE \tr X^{2p}\right)^{\frac{1}{2p}}\leq \norm{\Xfree}+2v'(X)\sqrt{p}\,.
    \end{align*}
\end{theorem}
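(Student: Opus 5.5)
The plan is to apply the variational bound of Theorem~\ref{thm:variational} in the bosonic case $\Gamma \equiv 1$, where $C_\Gamma(Z) = C_X(Z)$, and to compare it with Lehner's formula~\eqref{eq:road_lehner}. Via the equivalent operator of Section~\ref{sec:equivalentGaussian}, $\EE\tr X^{2p} = \tr\otimes\tau(\sX^{2p})$. Combining this with Lemma~\ref{lem:boundParticle}, which gives $\|N_\Gamma|_p\|\le p$, we get
\[
\left(\EE \tr X^{2p}\right)^{\frac{1}{2p}} \le \max_{\eps\in\{\pm1\}}\inf_{Z\succ0}\left\{\lambda_{\max}\left(\eps A_0 + Z^{-1}+\sum_{i=1}^n A_iZA_i\right) + p\,\lambda_{\max}^+(C_X(Z))\right\}.
\]
By Lehner's formula, the first term alone has max-inf equal to $\|\Xfree\|$. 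It therefore remains to control the penalty term $p\,\lambda_{\max}^+(C_X(Z))$ along a suitably perturbed near-optimal $Z$.

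First I will prove the weighted analogue of Proposition~\ref{prop:comparisonKappa}:
\[
\lambda_{\max}^+(C_X(Z)) \le \|Z\|\, v'(X)^2 .
\]
For the $\sigma$ bound, write $\langle u, C_X(Z)u\rangle = \sum_{i,j}\langle Z^{1/2}A_j u_i, Z^{1/2}A_i u_j\rangle$. Cauchy--Schwarz then bounds its absolute value by $\sum_{i,j}\|Z^{1/2}A_iu_j\|^2 \le \|Z\|\sigma(X)^2\|u\|^2$. For the $v$ bound, repeat the $B_a$ argument of Proposition~\ref{prop:comparisonKappa} with $Z^{1/2}$ inserted, or first use $C_X(Z) \preceq$ a conjugate of $C_X(\|Z\| I)$ after checking the appropriate monotonicity; I expect the direct Cauchy--Schwarz with $Z^{1/2}$ to go through the same way.

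Next comes the perturbation. Fix $\eps$ and $\delta>0$, and pick $Z_*\succ0$ with
\[
\lambda_{\max}\left(\eps A_0+Z_*^{-1}+\sum_{i=1}^n A_iZ_*A_i\right)\le \|\Xfree\|+\delta,
\]
which is possible by~\eqref{eq:road_lehner}. For $c>0$, define $Z$ by $Z^{-1} = Z_*^{-1}+cI$. Then $Z\preceq Z_*$ by operator monotonicity of inversion, so $\sum_{i=1}^n A_iZA_i \preceq \sum_{i=1}^n A_iZ_*A_i$. Moreover $Z^{-1}$ exceeds $Z_*^{-1}$ by exactly $cI$, and $\|Z\|\le 1/c$. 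Hence the objective at $Z$ is at most
\[
\|\Xfree\|+\delta + c + \frac{p\,v'(X)^2}{c}.
\]
Choosing $c = v'(X)\sqrt p$ and letting $\delta\to0$ gives $\|\Xfree\|+2v'(X)\sqrt p$ for each $\eps$, which is the claim. The degenerate case $v'(X)=0$ is handled by letting $c\to0$.

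The main obstacle I expect is the weighted comparison $\lambda_{\max}(C_X(Z))\le\|Z\|v(X)^2$, specifically its $v$-part. In the $B_a$ computation the weight $Z$ sits between $A_j$ and $A_i$, so the argument no longer reduces directly to Frobenius norms of linear combinations of the $A_i$. I would first try expanding $Z=\sum_k \mu_k w_kw_k^*$ and summing the unweighted-type bound over rank-one pieces. If that fails, the $\sigma$-part together with the plain NCK inequality still covers the regime $v\ge\sigma$.
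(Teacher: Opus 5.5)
Your plan is the paper's proof step for step. It applies Theorem~\ref{thm:variational} with $\Gamma\equiv 1$ and the particle bound $\|N_\Gamma|_p\|\le p$. It then uses the weighted bound $\|C_X(Z)\|\le v'(X)^2\|Z\|$, which is Lemma~\ref{lem:crossing}. Finally it substitutes $Z\mapsto (cI+Z^{-1})^{-1}$ and optimizes $c$. Your $\sigma$-half of the weighted bound, via Cauchy--Schwarz with $Z^{1/2}$, is complete and correct.

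The step you leave open, the $v$-half, is the gap. It is genuinely needed, and your fallback does not replace it. When $v(X)\ge\sigma(X)$ we have $v'(X)=\sigma(X)$, so the $\sigma$-half already suffices; the fallback only covers the case that needs no help. The case $v(X)<\sigma(X)$ is exactly where the $v$-bound is required. The monotonicity route is also not viable, since $Z\mapsto C_X(Z)$ is a partial transpose and not a positive map.

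The obstacle you anticipate is not real, though, and your first instinct closes it. Take $B_a=\sum_i (u_i)_a A_i$ as in Proposition~\ref{prop:comparisonKappa}. Self-adjointness of $A_j$ and $Z^{1/2}$ gives $\sum_c (A_iZ^{1/2})_{bc}\overline{(A_jZ^{1/2})_{ac}}=(A_iZA_j)_{ba}$. Summing against $(u_i)_a\overline{(u_j)_b}$ then yields
\[
\langle u, C_X(Z)u\rangle=\sum_{a,b,c=1}^d (B_aZ^{1/2})_{bc}\,\overline{(B_bZ^{1/2})_{ac}}\,,
\]
and therefore
\[
|\langle u, C_X(Z)u\rangle|\le\sum_{a=1}^d\|B_aZ^{1/2}\|_F^2\le\|Z\|\sum_{a=1}^d\|B_a\|_F^2\le\|Z\|\,v(X)^2\|u\|^2\,.
\]
So the weight is absorbed on the right of each $B_a$ at cost $\|Z\|$, and the reduction to Frobenius norms survives. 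Your rank-one expansion of $Z$ is the same computation.

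The paper proves both halves differently, by factorizing $C_X(Z)$ through a tensor-swap unitary. For example, $C_X(Z)=(V^*\otimes I_d)F_{13}(I_d\otimes Z^\top\otimes I_d)(V\otimes I_d)$ with $\|V\|=v(X)$. Your quadratic-form argument is the direct weighted analogue of the proof of Proposition~\ref{prop:comparisonKappa} and is just as short. With it filled in, your proof is complete.
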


To prove Theorem~\ref{thm:intrinsic2}, we
use the following bound for the norm of the partial commutator as a function of the $v'$
parameter.

\begin{lemma}\label{lem:crossing}
    For any self-adjoint Gaussian random matrix $X = \sum_{i = 1}^n g_i A_i$ and $Z\succ 0$, we have $\Norm{C_X(Z)} \leq  v'(X)^2\norm{Z}$.
\end{lemma}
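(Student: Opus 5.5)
We bound $\|C_X(Z)\|$ by two separate quantities, $\sigma(X)^2\|Z\|$ and $v(X)^2\|Z\|$. This mirrors the proof of Proposition~\ref{prop:comparisonKappa}, which is exactly the case $Z = I_d$. Since $C_X(Z)$ is self-adjoint, it suffices to bound $|\langle u, C_X(Z) u\rangle|$ for $u \in \CC^{dn}$ with blocks $u_1,\dots,u_n \in \CC^d$. Conjugating by $Z^{1/2}$ handles the weight. The quadratic form is
\[ \langle u, C_X(Z)u\rangle = \sum_{i,j} \langle u_i, A_j Z A_i u_j\rangle = \sum_{i,j}\langle Z^{1/2}A_j u_i, Z^{1/2}A_i u_j\rangle\,. \]

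For the $\sigma$ bound, apply Cauchy--Schwarz and AM-GM termwise:
\[ |\langle Z^{1/2}A_ju_i, Z^{1/2}A_iu_j\rangle| \le \tfrac12\left(\|Z^{1/2}A_ju_i\|^2 + \|Z^{1/2}A_iu_j\|^2\right)\,. \]
Summing over $i,j$, both halves give the same total $\sum_{i,j}\|Z^{1/2}A_i u_j\|^2$. This is at most
\[ \|Z\|\sum_j u_j^*\Big(\sum_i A_i^2\Big)u_j \le \|Z\|\sigma(X)^2\|u\|^2\,. \]

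For the $v$ bound, follow the second half of the proof of Proposition~\ref{prop:comparisonKappa}. Set $B_a = \sum_i (u_i)_a A_i$ for $a\in[d]$, so that $\|B_a\|_F^2 \le v(X)^2\sum_i|(u_i)_a|^2$. Expanding coordinates, $\sum_i A_i u_j(\cdot)$ with the $u_i$-coordinate contracted becomes a product of the $B$'s. Concretely,
\[ \sum_{i,j}\langle u_i, A_jZA_iu_j\rangle = \sum_{a,b} \big(B_{b}^{\,} Z B_a\big)_{ab}\,, \]
up to the index bookkeeping: the $a$-th coordinate of $u_i$ pairs with $A_j$'s row, and $u_j$'s $b$-coordinate pairs with $A_i$'s column. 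I would verify this identity carefully. I expect this bookkeeping to be the main obstacle, especially getting conjugations and transposes right so that a clean Cauchy--Schwarz applies.

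Given the identity, write $(B_bZB_a)_{ab} = \langle B_b^* e_a, Z B_a e_b\rangle$. Each $B_a$ is self-adjoint up to conjugation of coefficients; if complex $u$ causes trouble, I would first reduce to real parts or use $\|B_a^*\|_F = \|B_a\|_F$ anyway. Then Cauchy--Schwarz over all pairs $(a,b)$ gives
\[ \Big|\sum_{a,b}\langle B_b^*e_a, ZB_ae_b\rangle\Big| \le \|Z\|\Big(\sum_{a,b}\|B_b^*e_a\|^2\Big)^{1/2}\Big(\sum_{a,b}\|B_ae_b\|^2\Big)^{1/2} = \|Z\|\sum_a\|B_a\|_F^2\,. \]
This is at most $\|Z\|v(X)^2\|u\|^2$.

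Taking the minimum of the two bounds gives $\|C_X(Z)\|\le v'(X)^2\|Z\|$.
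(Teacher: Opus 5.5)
Your proof is correct after one small repair. It is the quadratic-form counterpart of the paper's argument rather than a genuinely different idea.

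\textbf{The repair.} The identity you left unverified should read $\langle u, C_X(Z)u\rangle=\sum_{a,b}(B_bZB_a^*)_{ab}$, where $B_a^*=\sum_i\overline{(u_i)_a}A_i$. Without the adjoint, as you wrote it, the right-hand side equals $u^\top C_X(Z)u$ rather than $u^*C_X(Z)u$; already for $n=1$ it is $u^\top A_1ZA_1u$.

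Of your two fallbacks, restricting to real $u$ would not suffice. In general $C_X(Z)$ is complex Hermitian, and its quadratic form on real vectors only sees its real part. Your second fallback is exactly right: $(B_bZB_a^*)_{ab}=\langle B_b^*e_a,ZB_a^*e_b\rangle$, and the same Cauchy--Schwarz yields $\|Z\|\sum_a\|B_a\|_F^2$ because $\|B_a^*\|_F=\|B_a\|_F$.

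You should also record why $C_X(Z)$ is self-adjoint, namely $(A_iZA_j)^*=A_jZA_i$, since your reduction of the norm to quadratic forms uses it. Your $\sigma$-part is fine as written.

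\textbf{Comparison with the paper.} The paper uses two factorizations through tensor-swap unitaries $S_{12}$ and $F_{13}$:
\begin{itemize}
\item $C_X(Z)=(I_n\otimes A^*)(S_{12}\otimes I_d)(I_{n^2}\otimes Z)(I_n\otimes A)$ with $Ax=\sum_ie_i\otimes A_ix$, giving $\|C_X(Z)\|\le\|A\|^2\|Z\|=\sigma(X)^2\|Z\|$;
\item $C_X(Z)=(V^*\otimes I_d)F_{13}(I_d\otimes Z^\top\otimes I_d)(V\otimes I_d)$ with $Ve_i=\vect(A_i)$, giving $\|C_X(Z)\|\le\|V\|^2\|Z\|=v(X)^2\|Z\|$.
\end{itemize}
Your termwise AM--GM is precisely $|\langle w,(S_{12}\otimes I_d)w\rangle|\le\|w\|^2$ for $w=(I_{n^2}\otimes Z^{1/2})(I_n\otimes A)u$. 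Your matrices $B_c$ are the components of $(V\otimes I_d)u=\sum_{a,b,c}(B_c)_{ab}\,e_a\otimes e_b\otimes e_c$. So the two proofs coincide in substance.

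The factorization bounds the operator norm directly, without needing self-adjointness, and makes the conjugation bookkeeping automatic, which is exactly where your write-up slipped. Your version has the advantage of being a literal extension of the proof of Proposition~\ref{prop:comparisonKappa} from $Z=I_d$ to general $Z\succ0$.
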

\begin{proof}
      We prove the upper bounds with $\sigma(X)$ and $v(X)$ separately.

      Define $A:\CC^d\to\CC^n\otimes\CC^d$ by
      $Ax=\sum_{i=1}^n e_i\otimes A_i x$.
      Then
      \begin{equation*}
          \|A\|^2
          =\|A^*A\|
          =\left\|\sum_{i=1}^n A_i^2\right\|
          =\sigma(X)^2.
          \label{eq:column-norm}
      \end{equation*}
      Let $S_{12}$ swap the two factors of $\CC^n\otimes\CC^n$.
      We have the factorization
      \begin{equation*}
          C_X(Z)
          =(I_n\otimes A^*)(S_{12}\otimes I_d)
          (I_{n^2}\otimes Z)(I_n\otimes A)\,.
          \label{eq:crossing-factor-sigma}
      \end{equation*}
      Indeed, the right-hand side sends $e_j\otimes x$ to
      $\sum_i e_i\otimes A_jZA_i x$.
      Since $S_{12}$ is unitary, taking norms gives
      $\|C_X(Z)\|\leq \sigma(X)^2\|Z\|$.

      For the other bound, define $V:\CC^n\to\CC^d\otimes\CC^d$ by
      $Ve_i=\vect(A_i)$, using the convention
      $\vect(A_i)=\sum_{a,b}(A_i)_{ab}e_a\otimes e_b$.
      By \eqref{eq:v-covariance},
      \begin{equation*}
          \|V\|=v(X)\,.
          \label{eq:coefficient-map-norm}
      \end{equation*}
      Let $F_{13}$ swap the first and third factors of $(\CC^d)^{\otimes 3}$.
      We claim that
      \begin{equation}
          C_X(Z)
          =(V^*\otimes I_d)F_{13}
          (I_d\otimes Z^\top\otimes I_d)(V\otimes I_d)\,,
          \label{eq:crossing-factor-v}
      \end{equation}
      where the transpose is taken in the basis used for vectorization.
      Applying the right-hand side to $e_j\otimes e_b$, the coefficient of
      $e_i\otimes e_a$ is
      \[
          \sum_{u,c}(A_j)_{au}Z_{uc}\overline{(A_i)_{bc}}
          =\sum_{u,c}(A_j)_{au}Z_{uc}(A_i)_{cb}
          =(A_jZA_i)_{ab}\,,
      \]
      where we used the self-adjointness of $A_i$.
      This proves \eqref{eq:crossing-factor-v}.
      Since $F_{13}$ is unitary and $\|Z^\top\|=\|Z\|$, taking norms gives
      $\|C_X(Z)\|\leq v(X)^2\|Z\|$.
      Combining the two bounds gives the result.
  \end{proof}

\begin{proof}[Proof of Theorem~\ref{thm:intrinsic2}]
    We first apply Theorem~\ref{thm:variational}
    with $\Gamma\equiv 1$:
    \begin{align}
        \left(\E \tr X^{2p}\right)^{\frac 1{2p}} &\le \max_{\eps\in \{\pm 1\}} \inf_{Z \succ 0} \left\{\lambda_{\max}\left(\eps A_0 + \sum_{i=1}^n A_i Z A_i + Z^{-1}\right) + \|C_X(Z)\| \|N_\Gamma|_p\|\right\}\nonumber\\
        &\le \max_{\eps\in \{\pm 1\}} \inf_{Z \succ 0} \left\{\lambda_{\max}\left(\eps A_0 + \sum_{i=1}^n A_i Z A_i + Z^{-1}\right) + p v'(X)^2 \|Z\|\right\}\,,\label{eq:lehnerFirst}
    \end{align}
    where we used Lemmas~\ref{lem:boundParticle} and~\ref{lem:crossing} in the second
    inequality.

    Given any $Z\succ 0$, consider
    $Z_t = (tI + Z^{-1})^{-1}$ for some parameter $t>0$ to be fixed later.
    Then we have $\|Z_t\|\le 1/t$ and $Z_t\preceq Z$, so
    \[
        \sum_{i=1}^n A_i Z_t A_i + Z_t^{-1} \preceq \sum_{i=1}^n A_i Z A_i + Z^{-1} + tI\,.
    \]
    Hence, any candidate $Z\succ 0$ to
    Lehner's formula~\eqref{eq:road_lehner} can
    be mapped to a candidate $Z_t\succ 0$ of~\eqref{eq:lehnerFirst} achieving objective value at most
    \[
        \lambda_{\max}\left(\eps A_0 + \sum_{i=1}^n A_i Z A_i + Z^{-1}\right) + t + \frac{p v'(X)^2} {t}\,.
    \]
    Optimizing the right-hand side over $t$ and taking the infimum
    over $Z$ yields
    the trace bound.
\end{proof}

\begin{proof}[Proof of Theorem~\ref{thm:strong-intrinsic}]
  By Jensen's inequality and Theorem~\ref{thm:intrinsic2}, for every integer $p\geq1$,
  \[
      \EE \Norm{X}\le d^{\frac 1 {2p}} \left(\E \tr X^{2p}\right)^{\frac 1{2p}}\le d^{\frac 1 {2p}} \left(\|
      \mathcal X_\free\| + 2 v'(X) \sqrt {p}\right).
  \]
  Applying Lemma~\ref{lem:optimization-free} with $a = \|\mathcal X_\free\|$ and $b = v'(X)$, and using $
  \log(2d)\leq2\log d$, gives
  \begin{align*}
      \EE \Norm{X}
      &\le \|\mathcal X_\free\| + 8\,\|\mathcal X_\free\|^{1/3} v'(X)^{2/3} (\log d)^{1/3} + 8\,v'(X)(\log d)^{1/2} \\
      &\le \|\mathcal X_\free\|\left(1 + 8\,\left(\frac{v(X)^2}{\sigma(X)^2}\log d\right)^{1/2} +
      8\,\left(\frac{v(X)^2}{\sigma(X)^2}\log d\right)^{1/3}\right),
  \end{align*}
  where the last step uses $\sigma(X)\le \|\mathcal X_\free\|$ and $v'(X)\le v(X)$.
\end{proof}

\section{Applications of norm bounds}
\label{sec:appl}

\subsection{Gaussian Cayley matrices}
\label{sec:cayley}

Given a finite group $G$ on $d$ elements, let $\{\rho_x\}_{x\in G}$ denote the left regular representation of $x\in G$ (i.e., $\rho_x$ is the $d\times d$ permutation matrix corresponding to left multiplication by $x$).
Consider the asymmetric Gaussian random matrix $Y^{(0)} = \sum_{x\in G}g_x\rho_x$, where $g_x$ are i.i.d.\ drawn from $\NNN(0,1)$, and define its Hermitian dilation
\[ X^{(0)} = \left[\begin{array}{cc} 0 & Y^{(0)} \\ Y^{(0)^{\top}} & 0 \end{array}\right]\,. \]
Note that $\|Y^{(0)}\| = \|X^{(0)}\|$, so we may consider these interchangeably, but we focus on using our tools on $X^{(0)}$ since it is Hermitian.

As we will see, it is helpful to apply a simple reduction before applying matrix concentration inequalities.
The all-ones vector $\chi$ of suitable dimension is an eigenvector of both $X^{(0)}$ and $Y^{(0)}$ with eigenvalue $\lambda = \sum_{x \in G}g_x$, whose law is $\mathcal{N}(0, |G|)$.
Thus, we may further define modified versions of these matrices where we subtract off this component,
\begin{align*}
    Y &\colonequals Y^{(0)} - \frac{\lambda}{|G|} \chi\chi^{\top}, \\
    X &\colonequals \left[\begin{array}{cc} 0 & Y \\ Y^{\top} & 0 \end{array}\right]\,.
\end{align*}
Then $X$ is again the Hermitian dilation of $Y$ and we have $\|X\| = \|Y\|$, and further for the original matrices we have
\[ \|Y^{(0)}\| = \max\left\{\|X\|, |\lambda|\right\}, \]
where the second term is with high probability $\widetilde{O}(|G|^{1/2})$ and will turn out to be essentially negligible in our calculations.
We thus focus on bounding $\|X\|$ accurately.

While the spectral norm of $X$ is known to depend heavily on the group structure and the dimensions of its irreducible representations~\cite{BKMZ24-Cayley}, the parameters $\sigma(X)$ and  $v(X)$ depend only on $d = |G|$: straightforward calculations give that $\sigma(X) = \sqrt{d}$ and $v(X) = \sqrt{2d}$.
Thus the original NCK inequality (Theorem~\ref{thm:nck}) gives the estimate
\begin{equation}
\EE\|X\| \lesssim \sqrt{d \log d} \label{eq:cayley-nck}
\end{equation}
while the improved NCK inequality of \cite{BBvH23} (Theorem~\ref{thm:bbvh}) gives the same with an inferior logarithmic factor.
The estimate in \eqref{eq:cayley-nck} is tight for some groups, for instance, for the cyclic group $G = \mathbb{Z} / d$.
However, for other groups we have $\EE\|X\| \asymp \sqrt{d}$.
In fact, this was precisely the motivation for the study of these matrices in~\cite{BKMZ24-Cayley}: it is one where the tools of~\cite{BBvH23} fail, while the spectrum can be understood with representation theory (see~\cite{BKMZ24-Cayley} for details). 
We show here that our parameter $\kappa(X)$ can witness the representation-theoretic properties that determine the spectrum of $X$. 

\begin{proposition}
    \label{prop:cayley-kappa}
    In the above setting, let $r_{\min}(G)$ be the smallest dimension of a nontrivial complex irreducible representation of $G$. Then,
      \[
          \kappa(X)=\sqrt{\frac{d}{r_{\min}(G)}}
          =\frac{\sigma(X)}{\sqrt{r_{\min}(G)}}\,.
      \]
  \end{proposition}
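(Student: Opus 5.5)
We will compute $C(X)$ explicitly using the Fourier decomposition of the regular representation, and reduce $\lambda_{\max}(C(X))$ to a Schur-orthogonality computation for a single irreducible representation.

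\emph{Step 1: reduce to $d\times d$ blocks.} Write $B_x \colonequals \rho_x - \frac1d\chi\chi^\top = \rho_x P$, where $P$ is the projection onto $\chi^\perp$, and $A_x = \left[\begin{smallmatrix} 0 & B_x \\ B_x^\top & 0\end{smallmatrix}\right]$, so that $X=\sum_x g_xA_x$. Then $A_yA_x = \mathrm{diag}(B_yB_x^\top,\, B_y^\top B_x)$. Hence, after permuting coordinates, $C(X)$ is the direct sum of $C_1 = [B_yB_x^{\top}]_{x,y\in G}$ and $C_2=[B_y^\top B_x]_{x,y\in G}$. Therefore $\kappa(X)^2=\max\{\lambda_{\max}(C_1),\lambda_{\max}(C_2)\}$.

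\emph{Step 2: Fourier decomposition.} Choose a unitary $U$ with $U\rho_xU^* = \bigoplus_{\pi}\pi(x)\otimes I_{d_\pi}$, where $\pi$ ranges over the irreducible unitary representations of $G$ and $d_\pi=\dim\pi$. Right multiplication by $P$ kills exactly the trivial summand, so $UB_xU^*=\bigoplus_{\pi\neq 1}\pi(x)\otimes I_{d_\pi}$. Conjugating $C_1$ by $I_G\otimes U$ therefore makes it, up to a permutation, a direct sum over nontrivial $\pi$ (each with multiplicity $d_\pi$) of
\[ M_\pi = \sum_{x,y} e_xe_y^*\otimes \pi(y)\pi(x)^*. \]
Since $B_x^\top=B_{x^{-1}}$ and $\rho$ is real, $C_2$ decomposes in the same way, with each $\pi$ replaced by its transpose or conjugate representation. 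That representation is again a nontrivial irreducible of the same dimension. So it suffices to show $\lambda_{\max}(M_\pi)=d/r$ for every nontrivial irreducible $\pi$ of dimension $r$. Maximizing $d/r$ over nontrivial $\pi$ then gives $\kappa(X)^2=d/r_{\min}(G)$. Combined with $\sigma(X)=\sqrt d$, this is the claim.

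\emph{Step 3: the single-irrep computation (the main point).} In coordinates,
\[ (M_\pi)_{(x,a),(y,b)}=\sum_c \pi(y)_{ac}\overline{\pi(x)_{bc}}. \]
Define vectors $w_{a,b,c}\in\CC^G\otimes\CC^r$ by $w_{a,b,c}=\overline{\pi(\cdot)_{ac}}\otimes e_b$. Then
\[ M_\pi=\sum_{a,b,c} w_{b,a,c}\, w_{a,b,c}^*. \]
By Schur orthogonality, $\sum_x \pi(x)_{ac}\overline{\pi(x)_{a'c'}} = \frac dr\delta_{aa'}\delta_{cc'}$. Hence the $w_{a,b,c}$ are pairwise orthogonal, each with squared norm $d/r$. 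It follows that $M_\pi=\frac dr\,S$, where $S$ is the partial isometry sending $\hat w_{a,b,c}\mapsto \hat w_{b,a,c}$ on their span (hats denote normalization). This $S$ is a self-adjoint involution on the span, since swapping $a,b$ is an involution. Its $+1$ eigenspace is nonzero: for example $\hat w_{a,a,c}$ is fixed. So $\lambda_{\max}(M_\pi)=d/r$.

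The step I expect to require the most care is the bookkeeping in Step 2: checking that conjugation really block-diagonalizes $C_1$ and $C_2$ across different irreducibles, with no cross terms. This holds because every $B_x$ is simultaneously block diagonal, so each product $B_yB_x^\top$ is as well. One must also confirm that the representations appearing in $C_2$ are still nontrivial of the same dimensions. The Schur-orthogonality step itself is routine once the vectors $w_{a,b,c}$ are set up.
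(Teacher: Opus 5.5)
Your proof is correct. Steps 1--2 are the same reduction the paper uses: your $C_1, C_2$ are the paper's $T_+=[\rho_{yx^{-1}}-P]$ and $T_-=[\rho_{y^{-1}x}-P]$, and your $M_\pi$ is the paper's $T_\pi$.

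One small inaccuracy in Step 2 concerns $C_2$. After Fourier decomposition its blocks are $\pi(y)^*\pi(x)$. These have your form $\pi'(y)\pi'(x)^*$ only for $\pi'(x)=\pi(x^{-1})$, which is an anti-representation, not the transpose or conjugate representation. The clean statement, which is the one the paper uses, is that relabeling block indices by $x\mapsto x^{-1}$ carries $C_2$ to $C_1$. In any case your Step 3 uses only Schur orthogonality, which survives this reindexing, so nothing breaks.

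The genuine difference is in how the single-irreducible computation in Step 3 is organized.
\begin{itemize}
\item The paper squares $T_\pi$ and uses $\sum_y \pi(y)M\pi(y)^*=\frac dr\Tr(M)I_r$ to get $T_\pi^2=\frac dr (V_\pi^*V_\pi)\otimes I_r$, with $V_\pi V_\pi^*=\frac dr I_{r^2}$. This puts the eigenvalues in $\{0,\pm d/r\}$. A separate trace argument, $\Tr T_\pi=dr>0$, is then needed to show that $+d/r$ actually occurs.
\item You use the orthogonality relations to write $M_\pi=\frac dr S$, where $S$ is the swap $a\leftrightarrow b$ on an orthonormal family of $r^3$ vectors.
\end{itemize}

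Your version gives the full spectrum at once: $+d/r$ with multiplicity $r\binom{r+1}{2}$, $-d/r$ with multiplicity $r\binom{r}{2}$, and $0$ otherwise. It also gives explicit top eigenvectors $\hat w_{a,a,c}$, needs no sign argument, and shows concretely how the partial transpose defining $C(X)$ appears as a swap operator. The paper's version is slightly shorter; yours is more explicit.
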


  \begin{proof}
      Set $P=d^{-1}\chi\chi^\top$.
      We have $X = \sum_{x \in G} g_x A_x$ for coefficient matrices
      \[
          A_x=\begin{bmatrix}
              0&\rho_x-P\\
              \rho_x^\top-P&0
          \end{bmatrix}\,.
      \]
      Since $\rho_xP=P\rho_x=P$, permuting the tensor factors
      identifies $C(X)=[A_yA_x]_{x,y\in G}$ with the direct sum $T_+ \oplus T_-$, where
      \[
          T_+=[\rho_{yx^{-1}}-P]_{x,y\in G}\,,
          \qquad
          T_-=[\rho_{y^{-1}x}-P]_{x,y\in G}\,.
      \]
      The permutation $x\mapsto x^{-1}$ of the block indices
      swaps $T_+$ with $T_-$.
      Thus these two have the same largest eigenvalue and $\kappa(X)^2=\lambda_{\max}(T_+)$.

      Decomposing the regular representation into irreducible representations over $\CC$ shows that, after conjugation by a suitable unitary matrix,
      $T_+$ is a direct sum of a zero block and the matrices
      \[
          T_\pi=[\pi(y)\pi(x)^*]_{x,y\in G}\,,
      \]
      each repeated $\dim\pi$ times, where $\pi$ ranges over the
      nontrivial irreducible unitary representations of $G$ (see, e.g., \cite[Lemma 7]{BKMZ24-Cayley} for a similar decomposition).
      We show that $\lambda_{\max}(T_\pi)=d/\dim\pi$.

      Fix such a representation $\pi$ of dimension $r$, and let
      $V_\pi\in M_{r^2,d}(\CC)$ have columns
      $\operatorname{vec}(\pi(x))$, indexed by $x\in G$.
      The Peter-Weyl decomposition gives $V_\pi V_\pi^*=(d/r)I_{r^2}$.
      Also, by Schur's lemma,
      \[
          (T_\pi^2)_{x,z}
          =\sum_{y\in G}\pi(y)\pi(x)^*\pi(z)\pi(y)^*
          =\frac dr\Tr(\pi(x)^*\pi(z))I_r\,.
      \]
      Consequently,
      \[
          T_\pi^2=\frac dr(V_\pi^*V_\pi)\otimes I_r\,.
      \]
      Every nonzero eigenvalue of $V_\pi^*V_\pi$ equals $d/r$,
      so every nonzero eigenvalue of $T_\pi$ equals $d/r$ or $-d/r$.
      Since $\Tr(T_\pi)=dr>0$, we have
      $\lambda_{\max}(T_\pi)=d/r$.
      Taking the maximum over the nontrivial irreducible
      representations proves the claim.
  \end{proof}

In particular, over a sequence of groups $G = G_n$ such that $r_{\min}(G_n) \gg \log |G_n|$,  giving rise to a sequence $X = X_n$, our Theorem~\ref{thm:strong-NCK} together with Proposition~\ref{prop:cayley-kappa} establishes $\EE\|X\| \asymp \sigma(X) = \sqrt{|G|}$, showing that the logarithmic factor incurred by the other NCK-type inequalities is spurious in this case.
Such groups are called (sometimes subject to other quantitative assumptions about how $r_{\min}(G)$ depends on $|G|$) \emph{quasirandom groups} in the literature; see \cite{BKMZ24-Cayley} for further discussion on this connection.

\subsection{NCK-type inequalities for \texorpdfstring{$\Gamma$}{Γ}-independent systems}

Despite the remarkable success of the theory
of intrinsic freeness, there are important cases
of Gaussian random matrices for which $\mathcal X_\free$ fails to describe the
spectrum and the norm --- for example,
for
the so-called {\em Kikuchi matrices}, which arise in
the analysis of algorithms for tensor PCA~\cite{WeinElAlaouiMoore2019,BCSvH24,kothari2025smoothtradeofftensorpca}. The parallel work~\cite[Section 1.2]{BKNPW2026-Paper1}
suggests that the right operator proxy for
these matrices is not $\mathcal X_\free$, but rather
$\mathcal X_\Gamma\defeq \sum_i A_i \otimes s_i$ 
for an appropriate graph~$\Gamma$, where the $s_i$ are
$\Gamma$-independent semicircles (Example~\ref{ex:traceMonoid}).

This example motivates the development of a theory
for 
``intrinsic $\Gamma$-independence'' that would
generalize the notion of intrinsic freeness.
A natural first step for this program is to understand the norm
of the pencils $\mathcal X_\Gamma$, just as Lehner's formula
(and the coarser bound $
\|\mathcal X_\free\|\le 2\sigma(X)$\,) has allowed the development of
intrinsic freeness.

Such operator-valued non-commutative Khintchine
inequalities depending on combinatorial
properties of the
graph $\Gamma$ have independently emerged 
in recent years. To compare these results, 
let us fix some notation for the rest of this section. Let $A_1, \ldots, A_n\in M_d(\CC)_\sa$ and set
\[
    \sigma^2 \defeq \Norm{\sum_{i=1}^n A_i^2}\,.
\]
Let $s_1, \ldots, s_n$ be $\Gamma$-independent 
semicircles (i.e., $s_i=\ell_i+\ell_i^\ast$,
where $\ell_i$ are solutions to the $\Gamma$-CCR for a simple unweighted graph $\Gamma$). Let $\mathcal X \defeq \sum_i A_i \otimes s_i$. Finally, we
denote by $\lambda_1(\Gamma)\ge \ldots \ge \lambda_n(\Gamma)$
the eigenvalues of $\Gamma$ and $\omega(\Gamma)$
its maximum clique size. 

In this notation, Collins and Miyagawa
show that $\|\mathcal X\|\le 2\sigma \sqrt{\lambda_1(\Gamma)+1}$~\cite{collinsEigenvalueBound}; and
Oliveira Santos, Tripathi, and Youssef show that
$\|\mathcal X\|\le 2\sigma \sqrt{\omega(\Gamma)}$~\cite{youssefTuran}. The latter bound is stronger, since
$\omega(\Gamma)\le \lambda_1(\Gamma)+1$.
Meanwhile, combining Theorem~\ref{thm:q-proj}
with Lemma~\ref{lem:clique-bound} directly gives:

\begin{corollary}\label{cor:Gamma-nck}
    Let $C_\Gamma \defeq [\Gamma_{ij} A_j A_i]_{1\le i,j\le n}$ be the partial commutator of $A_1, \ldots, A_n$. Then
    \[
        \|\mathcal X\|\le 2\sqrt{\sigma^2 + \lambda_{\max}^+(C_\Gamma) \omega(\Gamma)}\,.
    \]
\end{corollary}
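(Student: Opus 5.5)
The plan is to apply the second (untruncated) statement of Theorem~\ref{thm:q-proj} with $q = 0$, and then bound the number operator using Lemma~\ref{lem:clique-bound}.

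First, I check the hypotheses. Since $\Gamma$ is a simple unweighted graph, $\Gamma_{ii} = 0 < 1$ for all $i$, so the bounded version of Theorem~\ref{thm:q-proj} applies. It gives
\[
\|\mathcal{X}\| \le \frac{2}{\sqrt{1-q}} \sqrt{\sigma^2 + \lambda_{\max}^+(C_q)\,\|N_\Gamma\|}
\]
for every $q \in [0,1)$.

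Next, I specialize to $q = 0$. With this choice the prefactor $2/\sqrt{1-q}$ equals $2$. The partial commutator $C_q = [\Gamma_{i,j} A_j A_i - q A_i A_j]_{i,j}$ reduces to $[\Gamma_{ij} A_j A_i]_{i,j} = C_\Gamma$, exactly the matrix in the statement.

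Finally, Lemma~\ref{lem:clique-bound} gives $\|N_\Gamma\| \le \omega(\Gamma)$. Since $\lambda_{\max}^+(C_\Gamma) \ge 0$, replacing $\|N_\Gamma\|$ by $\omega(\Gamma)$ inside the square root only increases the bound, which yields
\[
\|\mathcal{X}\| \le 2\sqrt{\sigma^2 + \lambda_{\max}^+(C_\Gamma)\,\omega(\Gamma)}.
\]

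I do not expect a real obstacle. The only point needing care is confirming that the untruncated argument of Theorem~\ref{thm:q-proj} is legitimate here. That argument uses the operator inequality of Proposition~\ref{prop:q-NCK} with $\mathbf{1}$ in place of $\Pi_p$, together with $\|\mathcal{L}\mathcal{L}^*\| = \|\mathcal{L}^*\mathcal{L}\|$. For this, $\mathcal{L}$ must be bounded, which follows because $\mathcal{L}\mathcal{L}^*$ is dominated by $\sigma^2 I \otimes N_\Gamma$ and $N_\Gamma$ is bounded by the clique bound.
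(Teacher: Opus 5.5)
Your proposal is correct and is essentially the paper's argument: the corollary is obtained by applying the untruncated bound of Theorem~\ref{thm:q-proj} with $q=0$, so that $C_0 = C_\Gamma$ and the prefactor is $2$, and then bounding $\|N_\Gamma\|\le\omega(\Gamma)$ via Lemma~\ref{lem:clique-bound}. Your boundedness check is also fine, and even simpler than you suggest: since $\Gamma_{ii}=0$, each $\ell_i$ is an isometry, and with $q=0$ no rearrangement step is needed at all.
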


Since $\lambda_{\max}^+(C_\Gamma)\le \sigma^2$ (by the same argument as in Proposition~\ref{prop:comparisonKappa}), the bound in
Corollary~\ref{cor:Gamma-nck} is always at least as good,
up to a universal constant factor, as the
previous estimates.
Further, whenever $\lambda_{\max}^+(C_\Gamma)
\ll {\sigma^2}/{\omega(\Gamma)}$, 
our bound is $(2+o(1))\sigma$, which
substantially improves over these.

Another estimate that the preceding inequalities fail to capture is the following, which plays a key role in proving that independent Kikuchi matrices converge to a $q$-Gaussian system~\cite{BKNPW2026-Paper1}. It follows directly from Theorem~\ref{thm:q-proj}, applied with scalar coefficients $1/\sqrt n$.

\begin{corollary}\label{cor:Gamma-scalar}
    We have
    \[
        \norm{\frac{1}{\sqrt{n}}\sum_{i=1}^n s_i}\leq \inf_{q\in [0,1)}\frac 2 {\sqrt{1-q}} \sqrt{{1+\frac {\lambda_{\max}^+(\Gamma-{q}\chi\chi^{\top})\omega(\Gamma)} n}}\,.
    \]
\end{corollary}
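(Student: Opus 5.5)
We apply the second statement of Theorem~\ref{thm:q-proj} with $d=1$ and scalar coefficients $A_i = 1/\sqrt{n}$ for all $i\in[n]$. The theorem applies because $\Gamma$ is a simple graph, so $\Gamma_{ii}=0<1$. Then $\mathcal X = \frac{1}{\sqrt n}\sum_i s_i$, and we need to identify the three ingredients on the right-hand side.

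\emph{The variance term.} Since $\sum_i A_i^2 = n\cdot \frac1n = 1$, this term equals $1$.

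\emph{The partial commutator.} With scalar coefficients, $C_q = [\Gamma_{ij}A_jA_i - qA_iA_j]_{i,j} = \frac1n[\Gamma_{ij} - q]_{i,j}$. Since $[q]_{i,j} = q\chi\chi^\top$, we get $C_q = \frac1n(\Gamma - q\chi\chi^\top)$. Hence
\[
\lambda_{\max}^+(C_q) = \frac1n\lambda_{\max}^+(\Gamma-q\chi\chi^\top),
\]
using that $\lambda_{\max}^+$ is positively homogeneous.

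\emph{The number operator.} By Lemma~\ref{lem:clique-bound}, $\|N_\Gamma\|\le\omega(\Gamma)$.

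Substituting these three facts into Theorem~\ref{thm:q-proj} gives the bound
\[
\frac{2}{\sqrt{1-q}}\sqrt{1+\frac{\lambda_{\max}^+(\Gamma-q\chi\chi^\top)\,\omega(\Gamma)}{n}}
\]
for each fixed $q\in[0,1)$. Taking the infimum over $q$ yields the claim.

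No step here is a real obstacle. The only point needing care is checking that Theorem~\ref{thm:q-proj} holds for $d=1$ and arbitrary $q\in[0,1)$ independently of $\Gamma$. It does, since Proposition~\ref{prop:q-NCK} requires only the $\Gamma$-CCR.
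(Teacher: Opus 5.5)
Your proposal is correct and follows essentially the same route as the paper. The paper likewise applies the second statement of Theorem~\ref{thm:q-proj} with scalar coefficients $1/\sqrt{n}$, so that $C_q = \frac{1}{n}(\Gamma - q\chi\chi^\top)$, and bounds $\|N_\Gamma\|$ by $\omega(\Gamma)$ via Lemma~\ref{lem:clique-bound}.
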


\noindent The $q=0$ case of this inequality also appears in~\cite[Corollary 3.5]{collinsEigenvalueBound}.

\subsection{Ground state energy of the SYK model}\label{sec:syk}

The Sachdev-Ye-Kitaev (SYK) model is a model from
quantum physics that has attracted significant attention; we refer the reader to~\cite{PlumaRodriguez19,SYKPravesh} for background.
Let $0\le k\le n$ be even integers. The problem of finding the ground state energy of
the SYK model reduces to computing the largest eigenvalue of the
random matrix
\begin{equation*}
    H_{n,k} = {\binom n k}^{-\frac 12} \sum_{S\in \binom {[n]} k} g_S \psi_S\,,
\end{equation*}
where the $g_S$'s are independent standard Gaussians, and $\psi_S\in M_{2^{n/2}}(\CC)_\sa$  satisfy the relations
\begin{equation}
    \psi_S^2 = I\,,\qquad  \psi_S \psi_T = (-1)^{|S\cap T|} \psi_T \psi_S\,.\label{eq:syk}
\end{equation}
As $n\to\infty$ for $k=k(n)$
with $k^2/n \to \alpha\in [0,\infty)$, the spectral distribution 
of $H_{n,k}$ converges to the $e^{-2\alpha}$-Gaussian distribution~\cite{spectrumSYK}. The limiting spectral distribution is Gaussian if $k\ll \sqrt n$, semicircular if $k\gg \sqrt n$, and $q$-Gaussian in 
the intermediate regime.
Very recently, Basu, Kothari and Midha~\cite{SYKPravesh} proved sharp asymptotics for the norm of
$H_{n,k}$ in the regime $1\ll k\ll \sqrt n$.
As a corollary of Theorem~\ref{thm:q-proj}, we
obtain a new proof of their upper bound.

\begin{corollary}\label{cor:syk}
    For every even $2\le k< \frac {\sqrt n} {4}$, we have
    \[
        \E \Norm{H_{n,k}}\le \frac {\sqrt {2n}} k + O(1)\,.
    \]
\end{corollary}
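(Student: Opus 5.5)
We apply Theorem~\ref{thm:q-proj} to the bosonic case $\Gamma\equiv 1$. There the $\ell_i+\ell_i^*$ are independent Gaussians and $\tr\otimes\tau(\sX^{2p})=\EE\tr H_{n,k}^{2p}$ by \eqref{eq:trace-moments}. The coefficients are $A_S=\binom nk^{-1/2}\psi_S$, and the matrix dimension is $D=2^{n/2}$. Since $\psi_S^2=I$ we get $\sum_S A_S^2=I$, so $\sigma^2=1$. Lemma~\ref{lem:boundParticle} gives $\|N_\Gamma|_p\|\le p$. Combining with Jensen's inequality as in \eqref{eq:norm-trace}, for every $p\ge1$ and $q\in[0,1)$,
\[
\EE\|H_{n,k}\|\le D^{\frac1{2p}}\,\frac{2}{\sqrt{1-q}}\sqrt{1+p\,\lambda_{\max}^+(C_q)} .
\]
The leading constant fixes the choice of $q$. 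We take $q$ equal to the average $\bar q\defeq\binom nk^{-2}\sum_{S,T}(-1)^{|S\cap T|}$, which is $e^{-2k^2/n}(1+O(k^2/n^2))$ for $k<\sqrt n/4$. Then $2/\sqrt{1-q}=\sqrt{2n}/k\,(1+O(k^2/n))$, and this correction contributes $O(1)$ since $k^2/n\cdot\sqrt n/k=k/\sqrt n=O(1)$. For $p$ we take $p\asymp n^{3/2}/k$, so that $D^{1/2p}=\exp(n\log 2/(4p))=1+O(k/\sqrt n)$; this multiplicative factor also costs only $O(1)$.

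It remains to show $p\,\lambda_{\max}^+(C_q)=O(k/\sqrt n)$, that is, $\lambda_{\max}^+(C_q)\lesssim k^2/n^2$. Using \eqref{eq:syk}, the blocks of $C_q$ are
\[
\tbinom nk^{-1}\bigl(\psi_T\psi_S-q\psi_S\psi_T\bigr)=\tbinom nk^{-1}\bigl((-1)^{|S\cap T|}-q\bigr)\psi_S\psi_T .
\]
So $C_q=\binom nk^{-1}\,(G\otimes J_D)\circ B$, a blockwise Schur product. Here $G=[(-1)^{|S\cap T|}-q]_{S,T}$, and $B=[\psi_S\psi_T]_{S,T}=W^*W$ with $W=[\psi_S]_S$, so $B$ is positive semidefinite with identity diagonal blocks. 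We will use the block version of the Schur product theorem: writing $G=\sum_\mu \mu\, uu^*$, each term gives $\mu\,\mathrm{diag}(u)^*B\,\mathrm{diag}(u)$, and $B\preceq \lambda_{\max}(B)I$ is not good enough. Instead we use that $\mathrm{diag}(\bar u)B\,\mathrm{diag}(u)\preceq \|u\|_\infty^2\ldots$; more simply, we bound $\sum_{\mu>0}\mu\,\mathrm{diag}(u_\mu)^*B\,\mathrm{diag}(u_\mu)$ via the factorization $B=W^*W$. This gives $(G\otimes J)\circ B\preceq \lambda_{\max}^+(G)\,I$ whenever the diagonal of $B$ is the identity. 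The argument is the same as in Proposition~\ref{prop:comparisonKappa}: $u\mapsto \sum_S \bar u_S\, v_S\otimes\psi_S$ is an isometry-type map. Granting this step, $\lambda_{\max}^+(C_q)\le \binom nk^{-1}\lambda_{\max}^+(G)$.

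The main obstacle is the spectral computation for $G$. The matrix $[(-1)^{|S\cap T|}]$ lies in the Bose--Mesner algebra of the Johnson scheme $J(n,k)$. Its eigenvalues on the $j$-th eigenspace ($0\le j\le k$) are explicit sums of the form $\sum_i (-1)^i E_j(i)$, where $E_j$ are Eberlein (dual Hahn) polynomials. The all-ones eigenspace $j=0$ has eigenvalue $\binom nk\bar q$, which is exactly cancelled by the $-qJ$ term because of our choice of $q$. So the task reduces to showing that, for $1\le j\le k$, the normalized eigenvalue $\binom nk^{-1}\theta_j$ is at most $O(k^2/n^2)$. Heuristically this holds because $\theta_j/\binom nk$ equals $\EE[(-1)^{|S\cap T|}\phi_j]$ for a degree-$j$ spherical function $\phi_j$. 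For $j=1$ the covariance between $|S\cap T|$ and the linear statistic is of order $(k/n)\cdot(k^2/n)\cdot e^{-2k^2/n}$ relative to the trivial eigenvalue, and higher $j$ are smaller still. I would first try to evaluate $\theta_j$ through the generating function $\sum_i (-1)^i\binom ki\binom{n-k}{k-i}$ restricted to eigenspaces. As a fallback I would bound $\lambda_{\max}^+(G)$ by comparing with the hypercube Fourier transform of $x\mapsto(-1)^{\langle x,y\rangle}$ restricted to the weight-$k$ slice, where the nontrivial spectrum is governed by Krawtchouk values $K_k(\cdot)$ decaying like $(1-2k/n)^k$. Once this bound is available, $1+p\lambda_{\max}^+(C_q)=1+O(k/\sqrt n)$, and collecting the three $O(1)$ losses proves the corollary.
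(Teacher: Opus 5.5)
Your route is the paper's own. You apply Theorem~\ref{thm:q-proj} with $\Gamma\equiv1$ and $A_S={\binom{n}{k}}^{-1/2}\psi_S$, take $q$ to be the normalized top eigenvalue of $\Gamma_{n,k}=[(-1)^{|S\cap T|}]$, and choose $p\asymp n^{3/2}/k$. (Your average $\bar q$ is the paper's $q_{n,k}$, since every row sum of $\Gamma_{n,k}$ equals $\binom nk\bar q$.)

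Your Schur-product step reaches a true conclusion. It follows from $\lambda_{\max}^+(G)I-G\succeq0$ and the block Schur product theorem, because $B\succeq0$ has identity diagonal blocks. However, your write-up trails off, and the detour is unnecessary. Since $B=W^*W$ is block rank one, $C_q={\binom{n}{k}}^{-1}P(G\otimes I_D)P$ with $P=\operatorname{diag}(\psi_S)_S$ a self-adjoint unitary. So $C_q$ and ${\binom{n}{k}}^{-1}G\otimes I_D$ have identical spectra, which is exactly how the paper argues.

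The genuine gap is the estimate $\lambda_{\max}(\Gamma_{n,k}-\bar q\chi\chi^\top)\lesssim\frac{k^2}{n^2}\binom nk$, which you leave at the level of heuristics. The paper does not prove it either; it imports it as \cite[Lemma~5.1]{SYKPravesh}. Your heuristic is also misleading in a way that matters. On the first nontrivial eigenspace the eigenvalue is $\theta_1\approx-\frac{2k}{n}\binom nk$. It is negative and far larger in modulus than the target, and your own guess of order $\frac{k^3}{n^2}\binom nk$ would already exceed the target unless it were negative. So any argument bounding $|\theta_j|$ or $\|G\|$ must fail, since it would give $p\|G\|/\binom nk\asymp\sqrt n$. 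The bound holds only because the odd-index eigenvalues are negative; the top of the nontrivial spectrum is $\theta_2\approx\frac{4k^2}{n^2}\binom nk$.

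Here is a short way to close the gap. Expanding $(-1)^{|S\cap T|}=\sum_{U\subseteq S\cap T}(-2)^{|U|}$ gives $\Gamma_{n,k}=\sum_{j=0}^k(-2)^jW_j^\top W_j$, where $W_j$ is the inclusion matrix of $j$-sets in $k$-sets. On the Johnson scheme, $W_j^\top W_j$ has the standard eigenvalue $\binom{k-i}{j-i}\binom{n-i-j}{k-j}$ on the $i$-th eigenspace for $i\le j$, and zero for $i>j$. This yields
\[
\theta_i=\sum_{j=i}^{k}(-2)^j\binom{k-i}{j-i}\binom{n-i-j}{k-j}.
\]
Consecutive terms have ratio of modulus $\frac{2(k-j)^2}{(j+1-i)(n-i-j)}\le\frac{2k^2}{n-2k}<\frac14$ when $k<\sqrt n/4$. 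Hence $\theta_i$ has sign $(-1)^i$, and $|\theta_i|\le2^i\binom{n-2i}{k-i}\le(2k/n)^i\binom nk$. Therefore $\lambda_{\max}(\Gamma_{n,k}-\bar q\chi\chi^\top)\le\frac{4k^2}{n^2}\binom nk$.

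The case $i=0$ of the same expansion gives $1-\bar q=\frac{2k^2}{n}\bigl(1+O(k^2/n)\bigr)$, which is all you need for the leading constant. Your stated precision $\bar q=e^{-2k^2/n}\bigl(1+O(k^2/n^2)\bigr)$ is slightly off: the correction is of order $k^3/n^2$. This is harmless for the argument.
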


\begin{proof}
    Think of $H_{n,k}$
    as a pencil in Gaussian variables, with the
    matrices $\psi_S$ playing the role of the coefficients. By Theorem~\ref{thm:q-proj} with $A_S = {\binom n k}^{-1/2} \psi_S$, we have, for every even integer $p$
    and $q\in [0,1)$,
    \begin{equation}
        \left(\E \tr H_{n,k}^p\right)^{\frac 1p}\le \frac 2 {\sqrt{1-q}} \sqrt{1 + \frac {p\lambda_{\max}^+(C_q)} {\binom n k}}\,,\qquad \text{where }C_q \defeq \left[(\psi_T \psi_S - q \psi_S \psi_T)_{S,T\in \binom {[n]} k}\right]\,.\label{eq:sykBasic}
    \end{equation}
    Let $\Gamma_{n,k}$ be the $\binom n k\times \binom n k$-dimensional matrix with entries $\Gamma_{n,k}[S,T] \defeq (-1)^{|S\cap T|}$.
    The commutation relations~\eqref{eq:syk} give
    the decomposition
    \begin{align*}
        C_q &= \operatorname{diag}(\{\psi_S\}_S) \left((\Gamma_{n,k} - q\chi \chi ^\top)\otimes I_{2^{n/2}}\right) \operatorname{diag}(\{\psi_S\}_S)\\
        &= \operatorname{diag}(\{\psi_S\}_S) \left((\Gamma_{n,k} - q\chi \chi ^\top)\otimes I_{2^{n/2}}\right) \operatorname{diag}(\{\psi_S\}_S)^{-1}\,,
    \end{align*}
    where the second equality follows from $\psi_S = \psi_S^{-1}$ (from~\eqref{eq:syk}). Therefore, $C_q$ has the same spectrum as the inner matrix, and $\lambda_{\max}(C_q) = \lambda_{\max}(\Gamma_{n,k} - q\chi\chi^\top)$.
    The remainder of the proof is a simple estimate
    on the top two eigenvalues of the matrix $\Gamma_{n,k}$.

    \begin{lemma}\label{lem:spectralBound}
        The top eigenvalue of $\Gamma_{n,k}$ is
        \[
            \binom n k\cdot q_{n,k}\,,\qquad \text{with } q_{n,k} = 1 - \frac {2k^2} n + O\left(\frac {k^4} {n^2}\right)\,,
        \]
        with eigenvector $\chi$, and
        \[
            \lambda_{\max}\left(\Gamma_{n,k} - q_{n,k}\chi \chi^\top\right) \lesssim \frac {k^2} {n^2} \binom n k\,.
        \]
    \end{lemma}

    \noindent See~\cite[Lemma 5.1]{SYKPravesh} for a proof of Lemma~\ref{lem:spectralBound}. Plugging this choice of $q_{n,k}$ into~\eqref{eq:sykBasic}, we obtain
    \[
        \left(\E \tr H_{n,k}^p\right)^{\frac 1p}\le \frac {\sqrt{2n}} {k} \left(1 + O\left(\frac {k^2} n\right)\right) \sqrt{1 + O\left(\frac {pk^2} {n^2}\right)}\,.
    \]
    Picking $p = \frac {n^{3/2}} k$ and noting that $H_{n,k}$ has dimension $2^{n/2}$,
    \[
        \E \Norm{H_{n,k}}\le 2^{\frac {n}{2p}} \left(\E \tr H_{n,k}^p\right)^{\frac 1p}\le \frac {\sqrt{2n}} k + O(1)\,.\qedhere
    \]
\end{proof}
 
\begin{remark}[Comparison with \cite{SYKPravesh}]\label{rem:syk}
In summary, our proof of Corollary~\ref{cor:syk} proceeds by
representing $H_{n,k}$ as the matrix pencil
\begin{equation}
    {\binom n k}^{-\frac 12}\sum_S \psi_S \otimes (\ell_S+\ell_S^\ast) \,,\label{eq:bosonicSYK}
\end{equation}
where $\ell_S$ are the bosonic creation operators,
and then applying our non-commutative Khintchine inequality in the bosonic setting, with the matrices $\psi_S$ as coefficients. Instead,~\cite{SYKPravesh} first shows that
the operator~\eqref{eq:bosonicSYK}
has the same distribution as the operator
\begin{equation}
    {\binom n k}^{-\frac 12}\sum_S (a_S+a_S^\ast)\,,\label{eq:gammaSYK}
\end{equation}
where $a_S$ are the creation operators
for the $\Gamma_{n,k}$-CCR by comparing their moments (where $\Gamma_{n,k}$ is defined in the above proof). Then, they analyze the trace moments of~\eqref{eq:gammaSYK} using techniques closely related to those in our proof of Theorem~\ref{thm:q-proj}. We note that applying Theorem~\ref{thm:q-proj} directly to~\eqref{eq:gammaSYK} also recovers Corollary~\ref{cor:syk} without any combinatorial analysis of the SYK moments.
\end{remark}

\input{main.bbl}

\appendix

\section{Scalar optimization estimates}

We include the elementary estimates used to pass from trace moments to the operator norm.
The constants are not optimized.

\begin{lemma}
  \label{lem:optimization-free}
  For every $d \geq 2$ and $a,b \geq 0$,
  \[
      \inf_{p \ge 1} d^{\frac 1 {2p}}(a + 2b\sqrt p)
      \leq a + 5\left[(ab^2\log(2d))^{\frac 13} + b\log^{\frac 12}(2d)\right]\,.
  \]
\end{lemma}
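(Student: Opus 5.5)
We prove the bound by choosing a single integer $p$ depending on $a$, $b$, and $L \colonequals \log(2d)$, and then bounding $d^{1/(2p)}(a+2b\sqrt p)$ directly. Since $d \ge 2$ we have $L \ge \log 4 > 1$. We will use $d^{1/(2p)} = e^{\log d/(2p)} \le e^{L/(2p)}$ throughout. If $b=0$, letting $p\to\infty$ gives the infimum $a$, so assume $b>0$. If $a=0$, the claim follows by taking $p=\lceil L\rceil$, exactly as in Case 2 below.

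The natural choice balances the two error sources. The factor $d^{1/(2p)}$ multiplies $a$ and contributes roughly $aL/(2p)$. The term $2b\sqrt p$ contributes directly. Balancing $aL/p \approx b\sqrt p$ gives $p \approx (aL/b)^{2/3}$. We also need $p \gtrsim L$ so that $e^{L/(2p)}$ stays bounded. So we set
\[ p \colonequals \left\lceil \max\{ (aL/b)^{2/3},\, L\} \right\rceil \ge 1\,. \]
Then $L/(2p) \le 1/2$. By convexity of the exponential on $[0,1/2]$, for $x \in [0,1/2]$ we have $e^{x} \le 1 + 2(\sqrt e - 1)x \le 1 + 1.3x$. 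Hence $d^{1/(2p)} \le 1 + 0.65\, L/p$, and also $d^{1/(2p)} \le \sqrt e < 1.65$.

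Expanding, we get
\[ d^{1/(2p)}(a + 2b\sqrt p) \le a + 0.65\,\frac{aL}{p} + 3.3\, b\sqrt p\,. \]
We split into two cases according to which term attains the maximum.

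\textbf{Case 1:} $(aL/b)^{2/3} \ge L$. Then $p \ge (aL/b)^{2/3}$ gives
\[ \frac{aL}{p} \le (aL)^{1/3} b^{2/3} = (ab^2L)^{1/3}\,. \]
Since $p \ge L > 1$, we have $p \le 2(aL/b)^{2/3}$, so
\[ b\sqrt p \le \sqrt2\, b (aL/b)^{1/3} = \sqrt 2\,(ab^2L)^{1/3}\,. \]
The total error is therefore at most $(0.65 + 3.3\sqrt2)(ab^2L)^{1/3} \approx 5.32\,(ab^2L)^{1/3}$. This slightly exceeds $5$. It can be fixed by borrowing from the $5bL^{1/2}$ term or by tuning the constant in $p$. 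One option is to take $p = \lceil c\max\{\cdot\}\rceil$ with $c$ optimized, for instance $c = 1/2$ together with the ceiling handling below. This is the only delicate step.

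\textbf{Case 2:} $(aL/b)^{2/3} < L$, that is, $a < bL^{1/2}$. Then $p = \lceil L\rceil \le 2L$. We get $aL/p \le a < bL^{1/2}$ and $b\sqrt p \le \sqrt2\, bL^{1/2}$. The error is at most $(0.65 + 3.3\sqrt2)\,bL^{1/2}$, which again needs constant tuning.

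The main obstacle is purely bookkeeping on the constants; the structure of the bound is clear. The ceiling costs a factor $\sqrt2$ in $\sqrt p$. My first attempt to recover the constant $5$ would be to handle the ceiling more carefully. In the regime $\max\{\cdot\} \ge 4$, the ceiling costs at most a factor $5/4$ rather than $2$, so $\sqrt p \le 1.12\sqrt{\max\{\cdot\}}$. The remaining small-$L$ regime is $L < 4$ in Case 2, where $p \le 4$; there one can check directly with $p=1$ or $p=\lceil L\rceil$ and the crude bound $d^{1/(2p)} \le \sqrt e$. With the $1.12$ factor the constants become $0.65 + 3.3\cdot1.12 \approx 4.35 \le 5$.
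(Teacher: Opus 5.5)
Your strategy is essentially the paper's: choose $p$ to balance $aL/p$ against $b\sqrt p$, impose a floor $p\gtrsim L$, and linearize $e^{L/(2p)}$ on $[0,\tfrac12]$. The difference is that you take $p=\lceil M\rceil$ with $M=\max\{(aL/b)^{2/3},L\}$, while the paper takes $p=\lceil L+(aL/b)^{2/3}\rceil$. That difference is exactly what makes your constants tight. As written, your proof is incomplete at the only step that matters, which is reaching the constant $5$.

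With the crude ceiling bound $p\le 2M$, both of your cases give $0.65+3.3\sqrt2\approx 5.32$. Two of your proposed repairs do not work:
\begin{itemize}
\item Borrowing from $5bL^{1/2}$ cannot fix Case 1 on its own, because there $(ab^2L)^{1/3}/(bL^{1/2})=(a/(bL^{1/2}))^{1/3}$ is unbounded.
\item Taking $c=\tfrac12$ destroys the condition $L/(2p)\le\tfrac12$, on which both your bounds $e^x\le 1+1.3x$ and $d^{1/(2p)}\le\sqrt e$ rely.
\end{itemize}
The refined bound $\lceil M\rceil\le\tfrac54 M$ does handle $M\ge 4$. However, the leftover regime is $M<4$. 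That regime contains Case 1 with $L\le (aL/b)^{2/3}<4$ as well as Case 2 with $L<4$, and you only name the latter. You also do not carry out the ``direct check'' there. Note that $p=1$ would not suffice: for $a=0$ and $d=27$ it gives $2\sqrt{27}\,b>5b\sqrt{\log 54}$.

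The gap is easy to close. Since $M\ge L\ge\log 4>1.38$, you have $\lceil M\rceil\le\tfrac32 M$ for every admissible $M$; the worst case is $M\downarrow 2$. Hence $\sqrt p\le\sqrt{3/2}\,\sqrt M$, and both cases give the constant $0.65+3.3\sqrt{3/2}<4.7$, with no split at $M=4$ needed.

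The paper sidesteps all of this by using the sum rather than the max:
\begin{itemize}
\item $p\ge L$ and $p\ge (aL/b)^{2/3}$ hold simultaneously, so no case analysis is needed.
\item The ceiling is absorbed crudely via $p\le 2L+2(aL/b)^{2/3}$.
\item $\sqrt{x+y}\le\sqrt x+\sqrt y$ distributes $2b\sqrt p$ onto the two target terms.
\end{itemize}
Combined with $d^{1/(2p)}\le 1+L/p$ applied to the whole expression, which produces an extra term $2bL/\sqrt p\le 2b\sqrt L$, this yields constants $1+2\sqrt2$ and $2+2\sqrt2$. Both are below $5$ with room to spare.
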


\begin{proof}
  Set $\ell = \log(2d)>1$. If $b=0$, take $p\to\infty$; otherwise, set $p \defeq \left\lceil \ell + \left(\frac{a\ell}{b}\right)^{\frac 23}\right\rceil$.
  As $p \geq \ell$, the bound $e^u \leq 1 + 2u$ for $0 \leq u \leq \frac 12$ gives $d^{\frac 1 {2p}} \leq e^{\frac \ell {2p}} \leq 1 + \frac{\ell}{p}$.
  Therefore,
  \begin{align*}
      d^{\frac 1 {2p}}(a + 2b\sqrt p)
      &\le a+ \frac{a\ell}{p} + 2b\sqrt p + \frac{2b\ell}{\sqrt p}\\
      &\le a + (a \ell)^{\frac 13} b^{\frac 23} + 2b\sqrt{2\ell + 2\left(\frac{a\ell}{b}\right)^{\frac 23}} + 2b\sqrt \ell\,.
  \end{align*}
  Using $\sqrt{x+y}\le \sqrt{x} + \sqrt{y}$
  and bounding $2+2\sqrt{2}<5$ concludes the proof.
\end{proof}

\begin{lemma}
    \label{lem:optimization-scalar}
    For every $d \geq 2$ and $0 \leq \kappa \leq \sigma$,
    \begin{equation*}
        \inf_{p\ge 1} d^{\frac 1 {2p}}\sqrt{\sigma^2 + p\kappa^2}
        \leq \sigma + 2\kappa \log^{\frac 12}(2d)\,.
        \label{eq:optimization-scalar}
    \end{equation*}
\end{lemma}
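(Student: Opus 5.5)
We will exhibit one admissible integer $p\ge 1$ for which the left-hand side is at most the right-hand side; the infimum is then no larger. Write $\ell\defeq\log(2d)$. Since $d\ge 2$, we have $\ell\ge\log 4>1.38$. If $\kappa=0$, let $p\to\infty$: then $d^{1/2p}\to 1$ and the expression tends to $\sigma$. So assume $\kappa>0$ and set
\[ u\defeq\frac{\kappa\sqrt{\ell}}{\sigma}, \]
the ratio of the two terms on the right-hand side (put $u=\infty$ if $\sigma=0$). Squaring, the claim is equivalent to finding $p$ with
\[ d^{1/p}\,(\sigma^2+p\kappa^2)\le \sigma^2+4\sigma\kappa\sqrt{\ell}+4\kappa^2\ell, \]
and this is the form we will verify. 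Throughout we use $d^{1/p}\le e^{\ell/p}$ and the convexity bound $e^{x}\le 1+(e-1)x$ for $x\in[0,1]$, which applies whenever $p\ge\ell$.

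\emph{Case $u\le 1$ (the $\sigma$-dominated regime).} Take $p\defeq\lceil \sigma\sqrt{\ell}/\kappa\rceil$. Then $p\ge\ell$ because $u\le 1$, and $\ell/p\le u$. Moreover $p\kappa^2\le\sigma\kappa\sqrt{\ell}+\kappa^2$. Hence the left-hand side is at most
\[ (1+(e-1)u)\bigl(\sigma^2+\sigma\kappa\sqrt{\ell}+\kappa^2\bigr). \]
Expand this using $u\sigma^2=\sigma\kappa\sqrt{\ell}$, $u\sigma\kappa\sqrt{\ell}=\kappa^2\ell$ and $u\kappa^2\le\kappa^2$. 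It is bounded by
\[ \sigma^2+e\,\sigma\kappa\sqrt{\ell}+(e-1)\kappa^2\ell+e\,\kappa^2. \]
Since $e<4$, the cross term is fine. The last two terms are absorbed into $4\kappa^2\ell$ because $e\le(5-e)\ell$ for $\ell\ge\log 4$. This case closes with room to spare.

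\emph{Case $u>1$ (the $\kappa$-dominated regime).} Now $\sigma<\kappa\sqrt{\ell}$, and we choose $p$ proportional to $\ell$, say $p\defeq\lceil c\ell\rceil$ for a constant $c\ge 1$. This gives $d^{1/p}\le e^{1/c}$ and $p\kappa^2\le(c\ell+1)\kappa^2$. We need
\[ e^{1/c}\sigma^2+e^{1/c}(c\ell+1)\kappa^2\le\sigma^2+4\sigma\kappa\sqrt{\ell}+4\kappa^2\ell. \]
We would use $(e^{1/c}-1)\sigma^2\le(e^{1/c}-1)\sigma\kappa\sqrt{\ell}$, which is absorbed by the cross term since $e^{1/c}-1<4$. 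It then suffices to have $e^{1/c}(c\ell+1)\le 4\ell$. This holds for large $\ell$ with, e.g., $c=2$, since $2\sqrt e\approx 3.3<4$. Here we use the $\sigma\kappa\sqrt{\ell}$ term rather than discarding it, which is what rescues the constant compared with the cruder approach.

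\emph{Main obstacle.} The delicate point is the second case for small $\ell$, near $\log 4$ (i.e.\ $d=2,3$), where the ceiling $+1$ in $p\le c\ell+1$ is not negligible. There I would first try a mixed choice: if $\sigma\ge\kappa$, use the cross term $4\sigma\kappa\sqrt{\ell}\ge 4\kappa^2\sqrt{\ell}$ to absorb the additive $e^{1/c}\kappa^2$. If $\sigma<\kappa$, take $p$ large instead, e.g.\ $p=\lceil 3\ell\rceil$, and check the finitely many small values of $\lceil c\ell\rceil$ directly, using the exact value $e^{\ell/p}$ rather than $e^{1/c}$. If this still fails for some range, I would adjust the threshold between the two cases from $u=1$ to $u=t$ for a suitable $t$, since the first case has slack (coefficient $e$ versus $4$) that can be traded.
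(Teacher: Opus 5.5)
Your case $u\le 1$ is correct, but the case $u>1$ is not proved. There you spend only $(e^{1/c}-1)\sigma\kappa\sqrt{\ell}$ of the cross term and reduce to $e^{1/c}(c\ell+1)\le 4\ell$. For $c=2$ this needs $\ell\ge \sqrt{e}/(4-2\sqrt{e})\approx 2.35$, which excludes $d\le 5$. Within this reduction no constant $c$ helps near $\ell=\log 4$, since $\min_{c\ge 1} e^{1/c}(c+1/\log 4)\approx 4.33>4$. Your ``Main obstacle'' paragraph lists things you would try rather than giving an argument. One of them also points the wrong way: when $\sigma$ is small compared to $\kappa$, enlarging $p$ only inflates $p\kappa^2$. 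For instance, $d=2$, $\sigma=0$ and $p=\lceil 3\ell\rceil=5$ give $2^{1/5}\cdot 5\approx 5.74>4\log 4\approx 5.55$.

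The missing observation is that the lemma assumes $\kappa\le\sigma$, and you never use it. Your sub-case $\sigma<\kappa$, and the convention $u=\infty$ when $\sigma=0$, are therefore vacuous. In the case $u>1$ you have $\kappa\le\sigma<\kappa\sqrt{\ell}$, so the unused part of the cross term satisfies $(5-\sqrt{e})\sigma\kappa\sqrt{\ell}\ge(5-\sqrt{e})\kappa^2\sqrt{\ell}$. With $c=2$ it then suffices that $\sqrt{e}\le(4-2\sqrt{e})\ell+(5-\sqrt{e})\sqrt{\ell}$, which holds for every $\ell\ge\log 4$; this closes your argument.

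For comparison, the paper needs no case split. It takes the single interpolating choice $p=\lceil \ell+\sigma\sqrt{\ell}/\kappa\rceil$ and works with the unsquared expression via $d^{1/2p}\le 1+\ell/p$. It uses $\kappa\le\sigma$ exactly once, to absorb the $\kappa^2$ coming from the ceiling into $\sigma\kappa\sqrt{\ell}$. This gives $\sigma^2+p\kappa^2\le(\sigma+\kappa\sqrt{\ell})^2$ and $\frac{\ell}{p}(\sigma+\kappa\sqrt{\ell})\le\kappa\sqrt{\ell}$, which together yield the claim.
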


\begin{proof}
  Let $\ell \defeq \log(2d)> 1$. If $\kappa=0$, take $p\to\infty$; otherwise, set
  $p \defeq \left\lceil \ell+\frac{\sigma\sqrt \ell}{\kappa}\right\rceil$.
  By the same argument as in the previous lemma,
  $d^{\frac 1 {2p}}\le 1+\frac \ell p$.
  Moreover, since $\kappa\le \sigma$ and $\ell>1$,
  \[
    \sigma^2+p\kappa^2
      \le \sigma^2+\ell\kappa^2+\sigma\kappa\sqrt\ell+\kappa^2
      \le (\sigma+\kappa\sqrt\ell)^2\,.
  \]
  Finally, $\frac{\ell}{p}(\sigma+\kappa\sqrt\ell)
      \le {\kappa\sqrt\ell}{}$, so
  \[
      d^{\frac1{2p}}\sqrt{\sigma^2+p\kappa^2}
      \le \left(1+\frac{\ell}{p}\right)
          (\sigma+\kappa\sqrt\ell)
      \le \sigma+2\kappa\sqrt\ell\,.\qedhere
  \]
\end{proof}

\end{document}

%% file: main.bbl
\newcommand{\etalchar}[1]{$^{#1}$}